\documentclass{article}

\RequirePackage[T1]{fontenc}
\RequirePackage[utf8]{inputenc}
\RequirePackage{amsmath,amsfonts,amssymb,amsthm,mathtools,bm}
\RequirePackage[numbers,sort&compress]{natbib}
\RequirePackage{enumitem}
\RequirePackage{booktabs}
\RequirePackage[colorlinks=true,linkcolor=blue,citecolor=blue,urlcolor=blue]{hyperref}

\numberwithin{equation}{section}
\allowdisplaybreaks

\theoremstyle{plain}
\newtheorem{theorem}{Theorem}[section]

\newtheorem{proposition}[theorem]{Proposition}
\newtheorem{corollary}[theorem]{Corollary}

\theoremstyle{definition}

\theoremstyle{definition}
\newtheorem{remark}[theorem]{Remark}

\begin{document}

\title{Joint Laws of Maximum Drawdown and Maximum Drawup for Spectrally Negative L\'{e}vy Processes}

\author{ C. Vardar-Acar\footnote{Middle East Technical University, Department of Statistics, Institute of Applied Mathematics  \"{U}niversiteler Mah. Dumlupınar Blv. No:1, 06800 \c{C}ankaya Ankara, Turkey, \texttt{cvardar@metu.edu.tr}} \and E. Akdo{g}an\footnote{Tubitak SAGE, Ankara, Turkey,
\texttt{emre.akdogan@tubitak.gov.tr }} }
\maketitle
\begin{abstract}
Let $X$ be a spectrally negative L\'{e}vy process observed up to an independent exponential time $T$ with parameter $\gamma>0$. We study the joint law of the maximum drawdown and the maximum drawup of $X$ on $[0,T]$. The path is decomposed according to the two possible  orderings of its  infimum and supremum. Conditional on the values of these extrema and on their ordering, the resulting pre-, intermediate, and post- components are independent. We identify their laws as Doob $h$-transforms of killed spectrally negative L\'{e}vy processes and express the corresponding distribution functions explicitly in terms of the $\gamma$-scale functions $W^{(\gamma)}$ and $Z^{(\gamma)}$ and their derivatives. Combining the conditional laws with the joint densities of the  extrema yields integral representations of the joint distribution and moments. 
\end{abstract}
\noindent Keywords: Spectrally negative L\'{e}vy process; Maximum drawdown; Maximum drawup; Scale functions; Doob $h$-transform; Path decomposition at extrema; Infimum; Supremum; Joint Laws; Exponential time horizon.




\section{Introduction}\label{sec:introduction}

Maximum drawdown and maximum drawup measures, respectively, the largest decline and the largest realized gain along an investment, trading account, or asset over a specific time period.  Their distributions are relevant in risk measurement, investment policies, portfolio constraints, drawdown-linked contracts, and path-dependent derivatives; see, among others, \cite{Pospisil2010,Baurdoux2017, Hadjiliadis2006,Zhang2010,cvitanic1995portfolio,grossman1993optimal, Chekhlov2005,Magdon-Ismail2004,Leal2005,Carr2011,Vecer2007,Huang2013,Rossello2021,Sornette2003,Vecer2006,Avram2004,Mijatovic2012,Landriault2015,Landriault2016,Landriault2017}). The distributional properties of maximum drawdowns are studied in (\cite{Douady2000,Magdon-Ismail2004,Salminen2007,VardarAcar2017,VardarAcar2021,Salminen2020,salminen2025drawdowns,ZHANG2023104669,10.1214/ECP.v20-3945,risks7040105}.  In the spectrally negative L\'{e}vy setting, these functionals are naturally connected with fluctuation identities and first-passage problems, for which the scale functions $W^{(q)}$ and $Z^{(q)}$ provide the basic analytic tools; see \cite{Bertoin1993, Bertoin1996,kyprianou2006introductory,rivero2013,avram2020}.

For Brownian motion and one-dimensional diffusions, maximum increase and maximum decrease were studied in \cite{Salminen2007,Salminen2020}, while drawdown and drawup identities for spectrally negative L\'{e}vy processes were obtained in \cite{VardarAcar2017,VardarAcar2021,VardarAcar2026}.  Path decompositions at extrema, together with applications to maximum drawdown and drawdown duration, were developed in \cite{VardarAcar2021}.  A remaining ingredient in that program was the conditional law of the pre-infimum component of the entire process.  The identification of this component in \cite{VardarAcar2026} makes it possible to complete the decomposition when the infimum is attained before the supremum and, in the present paper, to combine the analysis with the case supremum is attained before the infimum into finding the joint distribution of maximum drawdown and maximum drawup together with various other joint and marginal distributions related to these functionals.

Let $X$ be a spectrally negative L\'{e}vy process observed up to an independent exponential time $T$ with rate $\gamma>0$.  Write $I_T$ and $S_T$ for the infimum and supremum on $[0,T]$, and $H_I$ and $H_S$ for their attainment times.  Conditional on $I_T=a<0<S_T=b$, the path is decomposed according to the ordering of these two times.  On $\{H_S<H_I\}$ the relevant pieces are the pre-supremum, the intermediate path from the supremum to the infimum, and the post-infimum component.  On $\{H_I<H_S\}$ the corresponding pieces are the pre-infimum, the intermediate path from the infimum to the supremum, and the post-infimum path.  Conditional on these extrema values and their orderings, the three components are independent in each case.

The purpose of this paper is to express the laws of these components and to find distribution of maximum drawdowns and maximum drawups in each component directly through $W^{(\gamma)}$, $Z^{(\gamma)}$ scale functions, and their derivatives, then to assemble these conditional pieces into finding the joint distribution of $(M_T^-,M_T^+).$ Along the way of finding this joint distribution various other joint distributions are also proved such as the joint distribution of the infimum and the supremum together with the two orderings of these extrema, the maximum drawdown together with the case infimum occurs before the supremum and the maximum drawup where the supremum occurs before the infimum. And subsequently, the mixed moments and correlation among maximum drawdown and maximum drawup are calculated. All these novel findings are applicable and relevant in studies concerning risk assessment, portfolio constraints, drawdown and drawup-linked contracts, and path-dependent derivatives. The results obtained in terms of the scale function formulas and they all reduce to the classical decomposition in the Brownian motion case given in study \cite{Salminen2007}. 

The paper is organized as follows.  Section~\ref{sec:preliminaries} contains the fluctuation identities and notation.  Section~\ref{sec:path-decomposition-at-supremum} gives the decomposition at the supremum and the associated pre- and post-supremum laws as well as the distributions of the maximum drawdowns and maximum drawups in each component. Similarly, Section~\ref{sec:path-decomposition-at-infimum} gives the decomposition at the infimum and corresponding path properties. Section~\ref{sec:HS-before-HI} treats the decomposition under $H_S<H_I$ and characterizes each path and finds the maximum drawup distributions on each of its three independent components.  Section~\ref{sec:HI-before-HS} recalls the decomposition at the the reverse ordering $H_I<H_S$, and the corresponding maximum-drawdown laws.  Section~\ref{sec:joint-distributions-of-maximum-drawdowns-and-maximum-drawups} combines the two orderings to obtain the joint and marginal distributions.  Section~\ref{sec:correlation} derives the moments and the correlation, and Section~\ref{sec:conclusion} concludes the study.

\section{Preliminaries}\label{sec:preliminaries}
\label{sec:mathematical-foundations}

Let $X=(X_t)_{t\ge0}$ be a spectrally negative L\'{e}vy process with unbounded variation on a complete filtered probability space $(\Omega,\mathcal{F},(\mathcal{F}_t)_{t\ge0},\mathbb{P})$.  Its Laplace exponent $\psi$ is defined by
\begin{equation}\label{eq:laplace-exponent}
\mathbb{E}_x\big[e^{\lambda(X_t-X_0)}\big]=e^{t\psi(\lambda)},\qquad \lambda\ge0,
\nonumber\end{equation}
and has the L\'{e}vy--Khintchine representation
\[
\psi(\lambda)=\frac{\sigma^2}{2}\lambda^2+\mu\lambda
+\int_{(0,\infty)}\bigl(e^{-\lambda y}-1+\lambda y\mathbf{1}_{\{y<1\}}\bigr)\Pi(\mathrm{d} y),
\qquad \lambda\ge0,
\]
where \(\sigma\ge0\), \(\mu\in\mathbb R\), and \(\Pi\) is the L\'{e}vy measure of \(-X\), satisfying \(\int_{(0,\infty)}(1\wedge y^2)\Pi(\mathrm{d} y)<\infty\).

For $q\ge0$, let
\[
\Phi(q)=\sup\{\lambda\ge0:\psi(\lambda)=q\}.
\]
The $q$-scale function $W^{(q)}:\mathbb{R}\to[0,\infty)$ is characterized by $W^{(q)}(x)=0$ for $x<0$ and
\begin{equation}\label{eq:scale-def}
\int_0^\infty e^{-\lambda x}W^{(q)}(x)\,\mathrm{d} x=\frac{1}{\psi(\lambda)-q},
\qquad \lambda>\Phi(q).\nonumber
\end{equation}
The associated second $q$-scale function is found as
\begin{equation}\label{eq:Z-def}
Z^{(q)}(x)=1+q\int_0^x W^{(q)}(y)\,\mathrm{d} y,
\qquad x\ge0.\nonumber
\end{equation}

Since $X$ has unbounded variation, the origin is regular for both half-lines and
$W^{(q)}$ is continuously differentiable on $(0,\infty)$
\cite[Lem.~2.4]{rivero2013}. Because several results involve a second
derivative of the scale function, we impose the following additional regularity
assumption. For the fixed rate $q>0$, the scale function $W^{(q)}$ belongs to $C^2(0,\infty)$. Whenever a boundary value is used, the corresponding
right-hand derivative is understood. This assumption holds, for example, when $X$ has a nonzero Gaussian component; it may also hold under suitable smoothness conditions on the L\'{e}vy measure. It is stated explicitly because unbounded variation alone does not ensure the existence of
$W^{(\gamma)\prime\prime}$.

For later verification, consider standard Brownian motion, where
\(\psi(\lambda)=\lambda^2/2\). Its \(\gamma\)-scale functions and the
derivatives used below are
\begin{align}
\Phi(\gamma)&=\sqrt{2\gamma},~~ W^{(\gamma)}(x)=\frac{2}{\sqrt{2\gamma}}
\sinh\!\left(\sqrt{2\gamma}\,x\right),
~~W^{(\gamma)\prime}(x)=2\cosh\!\left(\sqrt{2\gamma}\,x\right),\nonumber\\
W^{(\gamma)\prime\prime}(x)&=2\sqrt{2\gamma}
\sinh\!\left(\sqrt{2\gamma}\,x\right),
~~W^{(\gamma)\prime\prime\prime}(x)=4\gamma
\cosh\!\left(\sqrt{2\gamma}\,x\right),
~~ Z^{(\gamma)}(x)=\cosh\!\left(\sqrt{2\gamma}\,x\right),\nonumber\\
Z^{(\gamma)\prime}(x)&=\sqrt{2\gamma}
\sinh\!\left(\sqrt{2\gamma}\,x\right)=\gamma W^{(\gamma)}(x),~~
Z^{(\gamma)\prime\prime}(x)=2\gamma
\cosh\!\left(\sqrt{2\gamma}\,x\right)=2\gamma Z^{(\gamma)}(x),
x\geq0.
\label{eq:brownian-scale-functions}
\end{align}
In particular for standard Brownian motion, \(W^{(\gamma)}(0)=Z^{(\gamma)\prime}(0)=0\),
\(W^{(\gamma)\prime}(0+)=2\), and
\(W_{+}^{(\gamma)\prime}(x)=W^{(\gamma)\prime}(x)\) for \(x>0\).

We now consider the first passage times defined by
\[
\tau_c^+:=\inf\{t\ge0:X_t>c\},\qquad
\tau_c^-:=\inf\{t\ge0:X_t<c\}.
\]
For $a<x<b$, the standard two-sided exit identities are
\begin{align}
\mathbb{E}_x\left[e^{-q\tau_b^+};\tau_b^+<\tau_a^-\right]
&=\frac{W^{(q)}(x-a)}{W^{(q)}(b-a)},\label{eq:two-sided-up}\\
\mathbb{E}_x\left[e^{-q\tau_a^-};\tau_a^-<\tau_b^+\right]
&=Z^{(q)}(x-a)-Z^{(q)}(b-a)\frac{W^{(q)}(x-a)}{W^{(q)}(b-a)}.\label{eq:two-sided-down}
\end{align}
We also use the one-sided identity
\begin{equation}\label{eq:one-sided-down}
\mathbb{E}_x\left[e^{-q\tau_a^-};\tau_a^-<\infty\right]
=Z^{(q)}(x-a)-\frac{q}{\Phi(q)}W^{(q)}(x-a),\qquad x>a.\nonumber
\end{equation}
  In the following sections, the killed processes are formulated through passage times $\tau_c^\pm$ except for the pre-infimum process where the ordinary hitting time   $\tau_c:=\inf\{t\ge0:X_t=c\}$ is required. On the other hand, $\tau_c^{+} =\tau_c$ because $X$ has no positive jumps and the process creeps upwards.
  
Let $T\sim \operatorname{Exp}(\gamma)$, $\gamma>0$, be independent of $X$.  We define
\[
S_T:=\sup_{0\le t\le T}X_t,\qquad I_T:=\inf_{0\le t\le T}X_t,
\]
and denote by $H_S$ and $H_I$ the times at which these extrema are attained, respectively, that is, 
\[
H_S=\inf\{t < T: X_{t}=S_T\}, \qquad H_I=\inf\{t < T: X_t=I_T\} \; .
\]
 Since $X$ has no positive jumps, the  supremum is attained continuously, and also 0 is regular for $(0, \infty)$.

It follows from  \cite[Thm.3.1]{Millar1977} that 
\begin{equation}
X_{H_I} = I_T  \label{eq:-infimum-attainment}
\end{equation}
almost surely under $H_I<T$.  
    This can happen in two ways: the process $X$ either jumps into $I_T$ or $X$ is continuous at  $I_T$. If 0 is regular also for $(-\infty,0)$, which happens when $X$ is of unbounded variation, then $X$ is continuous at $I_T$ (\cite{Millar1977}, pg.370, Remark after Prop.2.4).

Statements involving conditioning on $I_T=a$ or on $(I_T,S_T)=(a,b)$ are
understood under fixed versions of the corresponding regular conditional laws.
For $a<0<b$, we use the notation
\begin{align}
\mathbb{P}_{a,b}^{<}(\,\boldsymbol\cdot\,)
&:=\mathbb{P}_0(\,\boldsymbol\cdot\mid H_I<H_S,I_T=a,S_T=b),
\label{eq:conditional-law-HIHS}\\
\mathbb{P}_{a,b}^{>}(\,\boldsymbol\cdot\,)
&:=\mathbb{P}_0(\,\boldsymbol\cdot\mid H_S<H_I,I_T=a,S_T=b).
\label{eq:conditional-law-HSHI}
\end{align}
Thus, the superscript records the  ordering of the  extrema:
``$<$'' corresponds to $H_I<H_S$, whereas ``$>$'' corresponds to $H_S<H_I$.

Now, for $t\geq0$,
$S_t:=\sup_{0\leq s\leq t}X_s,\qquad I_t:=\inf_{0\leq s\leq t}X_s.$ And,
let the first passage time at level $d$ of the drawdown process be defined by $\sigma_d=\inf \left\{t \geq 0: Y_t>d\right\}$ where $Y_t:=S_t-X_t,~t\geq 0$ also let the first passage time at level $u$ of the drawup process be defined by 
$\hat{\sigma}_d=\inf \left\{t \geq 0: \hat{Y}_t>d\right\}$
where $\hat{Y}_t:=X_t-I_t,~t\geq0.$

The maximum drawdown and maximum drawup on $[0,T]$ are defined directly by
\begin{equation}\label{eq:mdd-mdu-def}
M_T^-:=\sup_{0\le u\le v\le T}(X_u-X_v),\qquad
M_T^+:=\sup_{0\le u\le v\le T}(X_v-X_u).
\end{equation} And, more generally, for \(0\le p\le q\),
$
 M_{p,q}^-=\sup_{p\le u\le v\le q}(X_u-X_v),\qquad
 M_{p,q}^+=\sup_{p\le u\le v\le q}(X_v-X_u).
$

A positive excessive function $h$ for a killed Markov process defines the Doob $h$-transform by
\begin{equation*}\label{eq:h-transform}
P_t^h(x,\mathrm{d} y)=\frac{h(y)}{h(x)}\mathbb{P}_x\{X_t\in\mathrm{d} y,\ t<\zeta\},
\end{equation*}
where $\zeta$ is the killing time.  Positivity and excessivity of the functions $h$  follow from their representation as conditional  entrance probabilities for the corresponding killed semigroups.

To obtain the joint law of \((M_T^-,M_T^+)\) the methodology of reducing the path to conditional laws on independent path pieces will be used.  The reduction starts from the disjoint decomposition
\begin{equation}\label{eq:ordering-split}
 \mathbb{P}\{M_T^-<\alpha,M_T^+<\beta\}
 =
 \mathbb{P}\{M_T^-<\alpha,M_T^+<\beta,H_I<H_S\}
 +
 \mathbb{P}\{M_T^-<\alpha,M_T^+<\beta,H_S<H_I\}.
\end{equation}
The two terms in this sum require different analyses because the lack of positive jumps in spectrally negative Levy process makes the pre- and post-extremal values boundary conditions asymmetric.

On the event \(H_I<H_S\), conditional on \(I_T=a\) and \(S_T=b\), the path is split into
\[
 [0,H_I],\qquad [H_I,H_S],\qquad [H_S,T].
\]
The full-path drawup is the range \(S_T-I_T=b-a\) under the given condition, while the full paths drawdown is the maximum of the three segmentwise drawdowns \(M_{0,H_I}^-\), \(M_{H_I,H_S}^-\) and \(M_{H_S,T}^-\).  Hence, for bounded measurable \(\varphi\),
\begin{align}\label{eq:HIHS-reduction}
&\mathbb{E}\!\left[\varphi(M_T^+);\,M_T^-<\alpha,\,H_I<H_S\right]\nonumber\\
&\quad =
\mathbb{E}\!\left[\varphi(S_T-I_T);\,
       M_{0,H_I}^-<\alpha,\,
       M_{H_I,H_S}^-<\alpha,\,
       M_{H_S,T}^-<\alpha,\,
       H_I<H_S\right].
\end{align}
Given \((I_T,S_T)\) and \(H_I<H_S\), the three path pieces are independent; consequently the conditional expectation in \eqref{eq:HIHS-reduction} factorizes into a product of three scale-function expressions.

On the event \(H_S<H_I\), the path is split into
\[
 [0,H_S],\qquad [H_S,H_I],\qquad [H_I,T].
\]
Now the full-path drawdown is the range \(b-a\), while the drawup is the maximum of the pre-supremum, intermediate and post-infimum components drawdowns. The analogous factorization gives
\[
 \mathbb{E}\!\left[\varphi(M_T^-);\,M_T^+<\beta,\,H_S<H_I\right],
\]
again as an integral of a product of conditional path pieces laws against the joint law of \((I_T,S_T)\) on the ordering \(H_S<H_I\) event.  Substituting indicator functions for \(\varphi\) in the two reductions and treating the deterministic range constraint \(b-a\) gives the case-by-case formulae of the joint distribution of \((M_T^-,M_T^+)\).

Based on the conditional characterizations of the independent components of the path, the extrema densities under the two orderings are also derived and then used to remove the conditioning in the drawdown-drawup formulae.

\section{Path decomposition at the supremum} \label{sec:decsup}
We begin decomposing the process by conditioning only on the value of the supremum.  This decomposition will later be used for finding the law of the additional conditioning $\mathbb{P}_0( H_S<H_I,I_T=a,S_T=b)$ required on the process which is also on the infimum and the time ordering of these extrema values. 

In the following Theorem~\ref{lemma_pre_post_sup_dist}, we recall and present the distributions of the pre-supremum process \\$\left\{X_{u}: 0 \leq u \leq H_{S}\right\}$ and post-supremum process $\left\{X_{H_{S}+u}: 0 \leq u \leq\right.$ $\left.T-H_{S}\right\}$ and maximum drawdown distributions for each segment obtained in Lemma 1 of \cite{VardarAcar2021}. In addition to these results, we prove the distributions of the infimum before and after the time of the supremum together with the maximum drawup distributions of each segment. This result isolates the pre-supremum and the post-supremum components before additional conditioning of the ordering of two extrema is imposed. 
\begin{theorem}\label{lemma_pre_post_sup_dist}
Conditionally on $S_{T}=b$, it holds that:
	
	a) The pre-$H_S$ process and the post-$H_S$ process are independent.
	
	b) The law of pre-$H_S$ process is an \(h\)-transform of the law of the spectrally negative L\'{e}vy process killed at $T \wedge \tau_{b}^{+}$, with
	$$
	h(x)=e^{-\Phi(\gamma)(b-x)}, \quad x \leq b.
	$$
		More precisely, it is a spectrally negative L\'{e}vy process killed when it hits $b$, and governed by an Esscher exponential change of measure $\mathbb{P}^{\Phi(\gamma)}$ of the original measure, with	
	\begin{equation}\label{eqn9}
		\left.\frac{d \mathbb{P}^{\Phi(\gamma)}}{d \mathbb{P}}\right|_{\mathcal{F}_{t}}=e^{\Phi(\gamma) X_{t}-\gamma t} 
	\end{equation}
	and with Laplace exponent
	$$
	\widetilde{\psi}(\lambda)=\psi_{\Phi(\gamma)}(\lambda):=\psi(\lambda+\Phi(\gamma))-\gamma, \quad \lambda \geq-\Phi(\gamma).
	$$

Moreover, the distribution of the infimum of the pre-supremum process, $I_{0,H_{S}}$, when $S_{T}=b$ is given by
	\begin{equation}
	\mathbb{P}_0\left(I_{H_{S}} > a \mid S_{T}=b\right)=\frac{e^{\Phi (\gamma)(b)}W^{ (\gamma)}(-a)}{W^{(\gamma)}(-a+b)}
	\end{equation}
	for $b>a$ and for $u\ge b$
    \begin{align*}
\mathbb{P}\left\{M_{0, H_{S}}^{+}<u  \mid S_{T}=b \right\} =e^{\Phi(\gamma)(b)}\frac{W^{(\gamma)}(u-b)}{W^{(\gamma)}(u)}
\end{align*}
 also for $d\geq 0$
 \begin{align*}
\mathbb{P}\left\{M_{0, H_S}^{-}<d \mid S_T=b\right\}=e^{-b[\frac{W^{(\gamma)\prime}(d)}{W^{(\gamma)}(d)}-\Phi(\gamma)]}
\end{align*}
    
	c) The law of the post-supremum process is a \(h\)-transform of the spectrally negative L\'{e}vy process killed at $T \wedge \tau_{b}^{+}$with
	$$
	h(x)=1-e^{-\Phi(\gamma)(b-x)}, \quad x \leq b.
	$$ 
	Equivalently, the post- $H_{S}$ process is equal in distribution to $b+\Gamma$, where the law of $\Gamma$ is the $h$-transform of the law of a spectrally negative L\'{e}vy process conditioned to stay negative and killed at $T \wedge \tau_{0}^{+}$.

    Moreover, the distribution of the infimum of the post-supremum process, $I_{H_{S}, T}$, when $S_{T}=b$ is given by
	\begin{equation}
		\mathbb{P}_{b}\left(I_{H_{S}, T} > a \mid S_{T}=b\right)=\frac{Z^{(\gamma)\prime}(b-a)-\left(Z^{(\gamma)}(b-a)-1\right) \frac{W^{(\gamma)\prime}(b-a)}{W^{(\gamma)}(b-a)}}{\Phi(\gamma)}
	\end{equation}
	for $b\geq a$ 
also 
\begin{align*}
\mathbb{P}_{b}\left\{M_{H_S, T}^{-}>d \mid S_T=b\right\}=1+\frac{\left(Z^{(\gamma)}(d)-1\right) W^{(\gamma)^{\prime}}(d)-\gamma\left(W^{(\gamma)}(d)\right)^2}{\Phi(\gamma) W^{(\gamma)}(d)}
\end{align*}
 and for all $d\geq0.$ For $u\geq 0,$
 \begin{align*}
\mathbb{P}\left\{M_{H_{S},T}^{+}\geq u \mid S_{T}=b \right\} =1-\frac{\gamma W^{(\gamma)}(u)}{\Phi(\gamma)Z^{(\gamma)}(u)}
\end{align*} 
Moreover for $a\leq 0 \leq b,$
\[
\begin{aligned}
\mathbb{P}\left(I_T>a\mid S_T=b\right)
&=
e^{\Phi(\gamma)b}
\frac{W^{(\gamma)}(-a)}
     {W^{(\gamma)}(b-a)}
\frac{
Z^{(\gamma)\prime}(b-a)
-
\left(Z^{(\gamma)}(b-a)-1\right)
\dfrac{W^{(\gamma)\prime}(b-a)}
      {W^{(\gamma)}(b-a)}
}{
\Phi(\gamma)
}.
\end{aligned}
\]
\end{theorem}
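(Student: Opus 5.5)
The plan is to derive everything from the Wiener--Hopf splitting at the supremum of $X$ run up to the independent exponential time $T$, combined with the fluctuation identities \eqref{eq:two-sided-up}--\eqref{eq:two-sided-down}.

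\textbf{Decomposition and part (a).} Since $X$ creeps upward, $H_S=\tau_b^+$ on $\{S_T=b\}$, and $\mathbb{P}_0(S_T\in db)=\Phi(\gamma)e^{-\Phi(\gamma)b}\,db$. For a pre-$\tau_b^+$ functional $F$ and a functional $G$ of the shifted path, apply the strong Markov property at $\tau_b^+$ and the memorylessness of $T$:
\[
\mathbb{E}_0\bigl[F\,G;\,S_T\in[b,b+\varepsilon)\bigr]=\mathbb{E}_0\bigl[F\,e^{-\gamma\tau_b^+}\bigr]\,\mathbb{E}_b\bigl[G;\,S_T<b+\varepsilon\bigr].
\]
I then divide by $\mathbb{P}_0(S_T\in[b,b+\varepsilon))$ and let $\varepsilon\downarrow0$. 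This gives the product form, hence independence (a). The pre-supremum law has Radon--Nikodym factor $e^{-\gamma\tau_b^+}/e^{-\Phi(\gamma)b}$. By optional stopping of the martingale $e^{\Phi(\gamma)X_t-\gamma t}$, this factor is exactly the Esscher change \eqref{eqn9} killed at $\tau_b$, equivalently the $h$-transform with $h(x)=e^{-\Phi(\gamma)(b-x)}$. The post part is the limit $x\uparrow b$ of $\mathbb{P}_x(\,\cdot\,\mid S_T<b)$, i.e.\ the $h$-transform with $h(x)=\mathbb{P}_x(S_T\le b)=1-e^{-\Phi(\gamma)(b-x)}$.

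\textbf{Pre-supremum formulas.} Each event is identified with a two-sided exit event before $\tau_b^+$, and its probability is multiplied by $e^{\Phi(\gamma)b}$.
\begin{itemize}
\item For the infimum, $\{I_{H_S}>a\}=\{\tau_b^+<\tau_a^-\}$, and \eqref{eq:two-sided-up} gives $e^{\Phi(\gamma)b}W^{(\gamma)}(-a)/W^{(\gamma)}(b-a)$.
\item For the drawup with $u\ge b$: the drawup at time $v\le\tau_b^+$ is $X_v-I_v\le b-I_v$, and at $\tau_b^+$ it equals $b-I$. Hence $\{M^+_{0,H_S}<u\}=\{\tau_b^+<\tau_{b-u}^-\}$, and \eqref{eq:two-sided-up} with $a=b-u$ gives $W^{(\gamma)}(u-b)/W^{(\gamma)}(u)$.
\item For the drawdown, I invoke the known identity $\mathbb{E}_0[e^{-\gamma\tau_b^+};\tau_b^+<\sigma_d]=\exp\{-bW^{(\gamma)\prime}(d)/W^{(\gamma)}(d)\}$ (Avram--Kyprianou--Pistorius), which I would quote rather than reprove, as in Lemma 1 of \cite{VardarAcar2021}.
\end{itemize}

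\textbf{Post-supremum formulas.} Here I compute $\mathbb{P}_x(A,\,S_T<b)/(1-e^{-\Phi(\gamma)(b-x)})$ and let $x\uparrow b$, which is a l'H\^opital limit with denominator $\sim\Phi(\gamma)(b-x)$. For the infimum, take $A=\{I_T>a\}$; then
\[
\mathbb{P}_x\bigl(T<\tau_b^+\wedge\tau_a^-\bigr)=1-\frac{W^{(\gamma)}(x-a)}{W^{(\gamma)}(b-a)}-Z^{(\gamma)}(x-a)+Z^{(\gamma)}(b-a)\frac{W^{(\gamma)}(x-a)}{W^{(\gamma)}(b-a)}.
\]
This vanishes at $x=b$. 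Differentiating in $x$ at $b$ and dividing by $\Phi(\gamma)$ yields the stated ratio involving $Z^{(\gamma)\prime}$, $Z^{(\gamma)}-1$ and $W^{(\gamma)\prime}/W^{(\gamma)}$. For the drawdown and drawup, the same limit is applied to $\mathbb{P}_x(T<\tau_b^+\wedge\sigma_d)$ and $\mathbb{P}_x(T<\tau_b^+\wedge\hat\sigma_u)$. Starting from $x$ near $b$, the running supremum stays below $b$ on the relevant event, so $Y=b-X$ up to lower order. The first probability therefore reduces to the two-sided problem on $(b-d,b)$ as before, giving the $(Z^{(\gamma)}(d)-1)W^{(\gamma)\prime}(d)-\gamma W^{(\gamma)}(d)^2$ expression.

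\textbf{Final formula and main obstacle.} The last display, for $\mathbb{P}(I_T>a\mid S_T=b)$, is simply the product of the pre- and post-supremum infimum probabilities, by (a), since $I_T=\min(I_{H_S},I_{H_S,T})$. I expect the post-supremum drawup formula to be the main obstacle. Its answer $1-\gamma W^{(\gamma)}(u)/(\Phi(\gamma)Z^{(\gamma)}(u))$ does not depend on $b$, so the constraint "stay below $b$" must interact with the drawup exit problem only through the boundary rate at $b$. I would first try to write $\mathbb{P}_x(\hat\sigma_u<T\wedge\tau_b^+)$ by excursion theory for $X-I$ killed at $\tau_b^+$. I would then expand it to first order in $b-x$, using the Laplace transform $\mathbb{E}[e^{-\gamma\hat\sigma_u}]$, whose standard expression is a ratio of $Z^{(\gamma)}$ and $W^{(\gamma)}$ at $u$. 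The aim is to check that the leading term is $\Phi(\gamma)(b-x)$ times $\gamma W^{(\gamma)}(u)/(\Phi(\gamma)Z^{(\gamma)}(u))$.
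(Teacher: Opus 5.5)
Most of your plan follows the paper's route. The Esscher/two-sided-exit computations for the pre-supremum infimum and drawup match it, as does the $x\uparrow b$ l'H\^opital limit of $\mathbb{P}_x(T<\tau_b^+\wedge\tau_a^-)/(1-e^{-\Phi(\gamma)(b-x)})$ for the post-supremum infimum and the product formula for $\mathbb{P}(I_T>a\mid S_T=b)$. Your post-supremum drawdown step is also fine, and can be made exact: under the limiting law the path starts at its maximum $b$, so $M^-_{H_S,T}=b-I_{H_S,T}$ and the formula is the infimum law at $a=b-d$.

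\textbf{The gap: the post-supremum maximum drawup.} You leave this step unresolved, and the route you sketch targets the wrong quantity. Your setup sentence correctly names $\mathbb{P}_x(T<\tau_b^+\wedge\hat\sigma_u)$. Your last paragraph, however, switches to $\mathbb{P}_x(\hat\sigma_u<T\wedge\tau_b^+)$. That is not the complement of the needed event inside $\{T<\tau_b^+\}$: the complement is $\{\hat\sigma_u\le T<\tau_b^+\}$, whereas your event also contains paths that exceed $b$ after the drawup reaches $u$ but before $T$. In fact, with $y=u-b+x$,
\[
\mathbb{P}_x(\hat\sigma_u<T\wedge\tau_b^+)=\frac{Z^{(\gamma)}(y)-Z^{(\gamma)}(u)W^{(\gamma)}(y)/W^{(\gamma)}(u)}{Z^{(\gamma)}(u)}.
\]
Its first-order coefficient in $b-x$ is $\bigl(Z^{(\gamma)}(u)W^{(\gamma)\prime}(u)-\gamma W^{(\gamma)}(u)^2\bigr)/\bigl(W^{(\gamma)}(u)Z^{(\gamma)}(u)\bigr)$, not $\gamma W^{(\gamma)}(u)/Z^{(\gamma)}(u)$. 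So the check you propose would fail. For Brownian motion, for instance, one side is $\sqrt{2\gamma}/(\sinh(\sqrt{2\gamma}u)\cosh(\sqrt{2\gamma}u))$ and the other is $\sqrt{2\gamma}\tanh(\sqrt{2\gamma}u)$.

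\textbf{The missing idea.} No excursion computation or asymptotic expansion is needed, because there is an exact formula. For $0<b-x<u$,
\[
\mathbb{P}_x(T<\hat\sigma_u\wedge\tau_b^+)=1-\frac{Z^{(\gamma)}(u-b+x)}{Z^{(\gamma)}(u)},
\]
which the paper takes from \cite[Prop.~2]{Pistorious2004}. You can derive it by splitting at $\tau^-_{b-u}$.
\begin{enumerate}
\item Before $\tau^-_{b-u}$, the path stays in $(b-u,b)$ with running infimum above $b-u$, so the drawup constraint holds automatically.
\item At $\tau^-_{b-u}$ the process sits at its running infimum, so $X-I$ restarts from $0$. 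By memorylessness it then survives until $T$ with probability $1-1/Z^{(\gamma)}(u)$.
\item Afterwards $I\le b-u$, so $X<I+u\le b$; the constraint $S_T<b$ is implied by the drawup constraint.
\end{enumerate}
Hence
\[
\mathbb{P}_x(T<\hat\sigma_u\wedge\tau_b^+)=\mathbb{P}_x(T<\tau_b^+\wedge\tau^-_{b-u})+\mathbb{E}_x\bigl[e^{-\gamma\tau^-_{b-u}};\tau^-_{b-u}<\tau_b^+\bigr]\Bigl(1-\frac{1}{Z^{(\gamma)}(u)}\Bigr),
\]
and the two-sided identities collapse this to $1-Z^{(\gamma)}(u-b+x)/Z^{(\gamma)}(u)$. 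This depends on $b$ only through $b-x$, which explains the $b$-independence you noticed. L'H\^opital with $Z^{(\gamma)\prime}=\gamma W^{(\gamma)}$ then gives $\gamma W^{(\gamma)}(u)/(\Phi(\gamma)Z^{(\gamma)}(u))$.
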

\begin{proof}
The path decompositions, the characterizations of the pre-supremum and the post-supremum processes, their independence and maximum drawdown distributions 
are obtained in Lemma 1 of \cite{VardarAcar2021}. Here, we prove the distributions of the infimum before
and after the time of the supremum as well as the maximum drawup distributions of each segment. Conditional on $S_T=b$, the transition semigroup of the
post-$H_S$ process is
$$
P_{t}(x, \mathrm{~d} y)=\frac{h(y)}{h(x)} \mathbb{P}_{x}\left\{X_{t} \in \mathrm{d} y, t<T\right\}
$$
where $h(x)=1-e^{-\Phi(\gamma)(b-x)}$ with $x, y<b$, and the entrance law is obtained as $x \rightarrow b-$.
Then, it follows by applying L'Hôpital’s Rule that
\begin{align}\label{postsupinf}
&\lim_{x \rightarrow b}\mathbb{P}_x\left(I_{H_{S}, T}> a \mid S_T=b \right)=\lim_{x \rightarrow b}\mathbb{P}_x\left(T<\tau_{b}^+\wedge\tau_a^-\mid T<\tau_{b}^+
\right)
=\lim_{x \rightarrow b}\frac{\mathbb{P}_x\left(T<\tau_{b}^+\wedge\tau_a^-
\right)}{\mathbb{P}_x\left(T<\tau_{b}^+\right)}\nonumber\\
&=\lim_{x \rightarrow b}\frac{\mathbb{P}_x\left\{T<\tau_a^{-}, \tau_a^{-}<\tau_b^{+}\right\}+\mathbb{P}_x\left\{T<\tau_b^{+}, \tau_b^{+}<\tau_a^{-}\right\}}{\mathbb{P}_x\left(T<\tau_{b}^+\right)}\nonumber\\
&= \lim_{x \rightarrow b}\frac{1-Z^{(\gamma)}(x-a)+\left(Z^{(\gamma)}(b-a)-1\right) \frac{W^{(\gamma)}(x-a)}{W^{(\gamma)}(b-a)}}{1-e^{-\Phi(\gamma)(b-x)}}\nonumber\\
&= \frac{Z^{(\gamma)\prime}(b-a)-\left(Z^{(\gamma)}(b-a)-1\right) \frac{W^{(\gamma)\prime}(b-a)}{W^{(\gamma)}(b-a)}}{\Phi(\gamma)}
\end{align} also conditional on $S_T=b$, the pre-$H_S$ process is an \(h\)-transform of the law of the spectrally negative L\'{e}vy process killed at $T \wedge \tau_{b}^{+}$, with
	$$
	h(x)=e^{-\Phi(\gamma)(b-x)}, \quad x \leq b.
	$$ and for $a \leq 0 \leq b$ \begin{align}
	\mathbb{P}_0\left(I_{H_{S}} > a \mid S_{T}=b\right)&=\mathbb{P}_0^{\Phi (\gamma)}\left(I_{\tau_{b}^{+}} > a \right)=\frac{W^{\Phi (\gamma)}(-a)}{W^{\Phi (\gamma)}(-a+b)}\nonumber\\
    &=\frac{e^{-\Phi (\gamma)(-a)}W^{ (\gamma)}(-a)}{e^{-\Phi (\gamma)(b-a)}W^{(\gamma)}(-a+b)}=\frac{e^{\Phi (\gamma)b}W^{ (\gamma)}(-a)}{W^{(\gamma)}(-a+b)}\nonumber
	\end{align}
Both probabilities converge to one as $a\to-\infty$. Moreover,
\[
\begin{aligned}
\mathbb{P}\left(I_T>a\mid S_T=b\right)
&=
\mathbb{P}\left(
I_{H_S}>a\mid S_T=b
\right)
\mathbb{P}\left(
I_{H_S,T}>a\mid S_T=b
\right)\\
&=e^{\Phi(\gamma)b}
\frac{W^{(\gamma)}(-a)}
     {W^{(\gamma)}(b-a)}
\frac{
Z^{(\gamma)\prime}(b-a)
-
\left(Z^{(\gamma)}(b-a)-1\right)
\dfrac{W^{(\gamma)\prime}(b-a)}
      {W^{(\gamma)}(b-a)}
}{
\Phi(\gamma)
}.
\end{aligned}
\]

Furthermore, for $u\ge b,$ the distribution of the maximum increase of the pre-supremum process is; 
\begin{align*}
\mathbb{P}\left\{M_{0, H_{S}}^{+}< u \mid S_{T}=b \right\} =\mathbb{P}_{0}^{h}\left\{ \tau_{b}^{+}<\tau_{b-u}^{-},\tau_{b}^{+}<T  \right\}=\frac{h(b)}{h(0)} \mathbb{P}_0\left(\tau_b^{+}<\tau_{b-u}^{-}<, \tau_b^{+}<T\right)=e^{b\Phi(\gamma)}\frac{W^{(\gamma)}(u-b)}{W^{(\gamma)}(u)}
\end{align*} note that $0\leq\mathbb{P}\left\{M_{0, H_{S}}^{+}< u \mid S_{T}=b \right\} \leq 1$ and $\lim_{u \to
\infty}\mathbb{P}\left\{M_{0, H_{S}}^{+}< u \mid S_{T}=b \right\}=1.$ The distribution of the maximum increase of the post-supremum processes is found for $0<b-x<u,$
\begin{align*}
\mathbb{P}\left\{M_{H_{S},T}^{+}< u \mid S_{T}=b \right\} &=\lim_{x\to b}\mathbb{P}_{x}( M_T^+< u|T<\tau_b^+)=\lim_{x\to b}\frac{\mathbb{P}_{x}( M_T^+< u,T<\tau_b^+)}{\mathbb{P}_{x}(T<\tau_b^+)}=\lim_{x\to b}\frac{Z^{(\gamma)}(u)-Z^{(\gamma)}(u-b+x)}{Z^{(\gamma)}(u)[1-e^{-\Phi(\gamma)(b-x)}]}
\end{align*} by Prop.2 \cite{Pistorious2004}.
Now applying L'H$\hat{o}$pital and using $Z^{(\gamma)\prime}(u)=\gamma W^{(\gamma)}(u)$ gives
\begin{align*}
\mathbb{P}\left\{M_{H_{S},T}^{+}< u \mid S_{T}=b \right\}=\lim_{x\to b}\mathbb{P}_{x}( M_T^+< u|T<\tau_b^+)=\frac{\gamma W^{(\gamma)}(u)}{\Phi(\gamma)Z^{(\gamma)}(u)}
\end{align*}
and note that $0\leq\mathbb{P}\left\{M_{H_{S},T}^{+}< u \mid S_{T}=b \right\} \leq 1$ and $\lim_{u \to
\infty}\mathbb{P}\left\{M_{H_{S},T}^{+}< u \mid S_{T}=b \right\}=1,$ by the fact that $\lim_{u \to
\infty}\frac{ Z^{(\gamma)}(u)}{ W^{(\gamma)}(u)}=\frac{\gamma}{\Phi(\gamma)},$ as given in Lemma 3.3 of \cite{Kuznetsov2013}.
Furthermore,
\begin{eqnarray}
\mathbb{P}\left\{M_{T}^{+}< u \mid S_{T}=b \right\}&=&\mathbb{P}\left\{M_{0, H_{S}}^{+}< u \mid S_{T}=b \right\}\mathbb{P}\left\{M_{H_{S},T}^{+}< u \mid S_{T}=b \right\}\nonumber\\
&=&e^{b\Phi(\gamma)}\frac{W^{(\gamma)}(u-b)}{W^{(\gamma)}(u)}\frac{\gamma W^{(\gamma)}(u)}{\Phi(\gamma)Z^{(\gamma)}(u)},\quad u\geq b
\end{eqnarray} and using density of the supremum and integrating over values of $b$ recovers the well known result $\mathbb{P}\left\{M_{T}^{+}< u \right\}=1-\frac{1}{Z^{(\gamma)}(u)}$ given in Prop.2 \cite{Pistorious2004}.
\end{proof}

\section{Path decomposition at the infimum}\label{sec:path-decomposition-at-infimum}

In this section, we next continue with presenting the decomposition of the process on the value of the infimum which identifies the characterizations of the pre-infumum and the post-infimum processes and finds the maximum drawdown and supremum laws in each of these components. These results were proved in Proposition 1 of \cite{VardarAcar2021} and Theorem 3.1 of \cite{VardarAcar2026}. Additional to recalling these results, we prove the maximum drawup distributions of each segment for the first time. This decomposition will later be used for finding the law of the additional conditioning $\mathbb{P}_0( H_I<H_S,I_T=a,S_T=b)$ imposed on the process which is one of the bulding blocks for finding the joint distribution of the maximum drawdown and the maximum drawup.  Note also that, these conditioning on the extrema values and conditioning on the order of them requires separate treatments as the spectrally negative Levy process has no positive jumps which violates the symmetry property.
\begin{theorem}\label{Prop_pre_post_inf_dist}
Conditionally on $I_T=a$, the pre-$H_I$ and post-$H_I$ processes are independent.

\begin{enumerate}[label=\textup{(\roman*)}]
\item The law of the pre-$H_I$ process is the Doob $h$-transform of the spectrally negative L\'{e}vy process killed at $\tau_a\wedge T$, with
\[
h(x)=\frac{\gamma}{\Phi(\gamma)}W^{(\gamma)\prime}(x-a)-\gamma W^{(\gamma)}(x-a),\qquad x\geq a.
\]
Moreover,
\begin{align*}
\mathbb{P}_0\left\{S_{H_I}<b\mid I_T=a\right\}
&=1-\frac{\frac{\gamma}{\Phi(\gamma)}W^{(\gamma)\prime}(b-a)-\gamma W^{(\gamma)}(b-a)}
{\frac{\gamma}{\Phi(\gamma)}W^{(\gamma)\prime}(-a)-\gamma W^{(\gamma)}(-a)}
\frac{W^{(\gamma)}(-a)}{W^{(\gamma)}(b-a)},
\qquad b\geq0,
\end{align*}
and
\begin{align*}
\mathbb{P}\left\{M_{0,H_I}^{-}<d\mid I_T=a\right\}
&=1-\frac{\frac{\gamma}{\Phi(\gamma)}W^{(\gamma)\prime}(d)-\gamma W^{(\gamma)}(d)}
{\frac{\gamma}{\Phi(\gamma)}W^{(\gamma)\prime}(-a)-\gamma W^{(\gamma)}(-a)}
\frac{W^{(\gamma)}(-a)}{W^{(\gamma)}(d)},
\qquad d\geq0.
\end{align*}
For $u>0$,
\begin{align*}
\mathbb{P}\left\{M_{0,H_I}^{+}\leq u \mid I_T=a\right\}
=\frac{\frac{\gamma W^{(\gamma)}(u)}{\Phi(\gamma)}\mathcal{L}_{\theta \rightarrow -a}^{-1}\left\{\frac{1}{Z_\gamma(u, \theta)}\right\}}{\frac{\gamma}{\Phi(\gamma)} W^{(\gamma)^{\prime}}(-a)-\gamma W^{(\gamma)}(-a)}
\end{align*}
where $\mathcal{L}^{-1}$ is the inverse Laplace transform and  $Z_\gamma(u, \theta)$ denotes the generalized/two-parameter scale function.
\item The law of the post-$H_I$ process is the Doob $h$-transform of the spectrally negative L\'{e}vy process killed at $T\wedge\tau_a^-$, with
\[
h(x)=1-Z^{(\gamma)}(x-a)+\frac{\gamma}{\Phi(\gamma)}W^{(\gamma)}(x-a).
\]
Moreover,
\[
\mathbb{P}\left\{S_{H_I,T}<b\mid I_T=a\right\}
=\frac{\Phi(\gamma)\bigl(Z^{(\gamma)}(b-a)-1\bigr)}{\gamma W^{(\gamma)}(b-a)},
\qquad b>a,
\]
\begin{align*}
\mathbb{P}\left\{M_{H_I,T}^{-}>d\mid I_T=a\right\}
&=1-\Phi(\gamma)\frac{W^{(\gamma)}(d)}{W^{(\gamma)\prime}(d)},
\qquad d\geq0,
\end{align*}
and, for $u>0$,
\begin{align*}
\mathbb{P}\left\{M_{H_I,T}^{+}< u\mid I_T=a\right\}
=\frac{\Phi(\gamma)\bigl(Z^{(\gamma)}(u)-1\bigr)}{\gamma W^{(\gamma)}(u)}.
\end{align*}
\end{enumerate}
\end{theorem}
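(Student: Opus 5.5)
Most of this statement is recalled: independence, the two $h$-transform characterizations, the laws of $S_{H_I}$ and $S_{H_I,T}$, and the maximum drawdown laws of both segments are Proposition~1 of \cite{VardarAcar2021} together with Theorem~3.1 of \cite{VardarAcar2026}. So we take those as given. Only the two maximum drawup laws need proof, and we handle them the way Theorem~\ref{lemma_pre_post_sup_dist} handled the post-supremum drawup: express each conditional probability through the killed semigroup and the appropriate $h$, then use Pistorius-type drawdown/drawup exit identities.

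\textbf{Post-infimum segment.} Conditional on $I_T=a$, the post-$H_I$ process is the $h$-transform of $X$ killed at $T\wedge\tau_a^-$, with $h(x)=1-Z^{(\gamma)}(x-a)+\frac{\gamma}{\Phi(\gamma)}W^{(\gamma)}(x-a)=\mathbb{P}_x(T<\tau_a^-)$. Its entrance law is obtained as $x\downarrow a$. Hence
\[
\mathbb{P}\{M^+_{H_I,T}<u\mid I_T=a\}=\lim_{x\downarrow a}\frac{\mathbb{P}_x(M^+_T<u,\,T<\tau_a^-)}{\mathbb{P}_x(T<\tau_a^-)}.
\]
On $\{T<\tau_a^-\}$ started near $a$, the running infimum stays above $a$. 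The event $\{M_T^+<u\}$ is then the event that the drawup $\hat Y$ does not exceed $u$ before $T$. For the numerator we use the two-sided identities \eqref{eq:two-sided-up}--\eqref{eq:two-sided-down} and the strong Markov property to decompose at the successive new infima. Equivalently, we apply Pistorius' identity for the drawup exit $\hat\sigma_u$ killed at $\tau_a^-$. This should give a numerator of the form
\[
1-Z^{(\gamma)}(x-a)+\bigl(Z^{(\gamma)}(u)-1\bigr)\frac{W^{(\gamma)}(x-a)}{W^{(\gamma)}(u)}+o(x-a).
\]
Both numerator and denominator vanish at $x=a$ because $W^{(\gamma)}(0)=0$ in the unbounded variation case. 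Dividing and applying L'H\^opital, the $Z^{(\gamma)\prime}(0)=0$ terms drop out and the common factor $W^{(\gamma)\prime}(0+)$ cancels. This leaves $\Phi(\gamma)(Z^{(\gamma)}(u)-1)/(\gamma W^{(\gamma)}(u))$, independent of $a$. As a sanity check, this matches the law of $S_{H_I,T}$ at $b-a=u$, since the range of the segment dominates its drawup. We also check that the value tends to $1$ as $u\to\infty$ by Lemma~3.3 of \cite{Kuznetsov2013}.

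\textbf{Pre-infimum segment.} Here $h(0)$ is the stated denominator, and
\[
\mathbb{P}\{M^+_{0,H_I}\le u\mid I_T=a\}=\frac{1}{h(0)}\lim \text{(density in $a$ of }\mathbb{E}_0[\,\mathbf 1\{M^+_{\tau_a}\le u,\ \tau_a<T\}])
\]
in the $h$-transform sense. We write $\theta=-a$ and pass to the Laplace transform in $\theta$, i.e.\ integrate over the level of the infimum with weight $e^{-\theta\cdot}$. The drawup constraint before the first hit of $a$ then becomes the first exit problem for the drawup process, killed at rate $\gamma$ and discounted by the level of the running infimum. Pistorius' result, or its generalization by Mijatović--Pistorius \cite{Mijatovic2012} with the two-parameter scale function $Z_\gamma(u,\theta)$, gives this transform as $\frac{\gamma W^{(\gamma)}(u)}{\Phi(\gamma)}\cdot\frac{1}{Z_\gamma(u,\theta)}$. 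The factor $\gamma W^{(\gamma)}(u)/\Phi(\gamma)$ comes from the exponential time ending after the last infimum, as in the post-supremum computation. Inverting in $\theta$ and dividing by $h(0)$ yields the formula.

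The main obstacle is this pre-infimum step. We must identify the correct Laplace-transform identity and justify treating conditioning on $I_T=a$ as a density, since $\tau_a$ is an exact hitting time. The post-infimum L'H\^opital computation should be routine once the Pistorius identity is written down.
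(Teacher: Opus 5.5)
\textbf{Pre-infimum segment: genuine gap.} You take $1/h(0)$ times the $a$-density of $\mathbb{P}_0(\tau_a<T,\ M^+_{\tau_a}\le u)$. That is not the conditional law given $I_T=a$. Conditioning on $I_T=a$ also constrains the path after $\tau_a$: it must not go below $a$ before $T$.

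Differentiating in $a$ assigns weight $1$, instead of $\mathbb{P}_u(T<\tau_0^-)=1-Z^{(\gamma)}(u)+\tfrac{\gamma}{\Phi(\gamma)}W^{(\gamma)}(u)$, to paths whose drawup reaches $u$ after $\tau_a$ but before the infimum moves below $a$. Standard Brownian motion shows the effect. With $k=\sqrt{2\gamma}$, one has $\mathbb{P}_0(\tau_a<T,\ M^+_{\tau_a}\le u)=e^{ak\coth(ku)}$ and $h(0)=ke^{ak}$. Your recipe then yields $\coth(ku)\,e^{ak(\coth(ku)-1)}$, which tends to $\coth(ku)>1$ as $a\uparrow0$. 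The correct value is $e^{ak(\coth(ku)-1)}$.

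So the transform $\frac{\gamma W^{(\gamma)}(u)}{\Phi(\gamma)}\cdot\frac{1}{Z_\gamma(u,\theta)}$ that you state does not follow from your representation. The prefactor is only asserted (``as in the post-supremum computation''), and it is exactly the nontrivial content of the claim.

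\textbf{How to close it.} Decompose at the first drawup time $\hat\sigma_u$, which is the route the paper takes. At $\hat\sigma_u$ one has $X=u-\ell$ with $\ell=L_{\hat\sigma_u}$. The strong Markov property of the $h$-transformed process at $\hat\sigma_u$ gives
\[
h(0)\,\mathbb{P}\{M^+_{0,H_I}\ge u\mid I_T=a\}=\int_{[0,-a)}h(u-\ell)\,\eta_u^{(\gamma)}(\mathrm d\ell).
\]
The factor $h(u-\ell)$, the density of $I_T$ at $a$ seen from $u-\ell$, is precisely what enforces ``stays above $a$ until $T$''.

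Let $f(z)=\tfrac{\gamma}{\Phi(\gamma)}W^{(\gamma)\prime}(z)-\gamma W^{(\gamma)}(z)$ be the density of $-I_T$. The right-hand side is then a convolution in $-a$ with transform $Z_\gamma(u,\theta)^{-1}\int_0^\infty e^{-\theta z}f(u+z)\,\mathrm dz$. Use $\hat f(\theta)=\int_0^\infty e^{-\theta z}f(z)\,\mathrm dz=\frac{\gamma(\Phi(\gamma)-\theta)}{\Phi(\gamma)(\gamma-\psi(\theta))}$ and one integration by parts to get
\[
\hat f(\theta)\,Z_\gamma(u,\theta)-\int_0^\infty e^{-\theta z}f(u+z)\,\mathrm dz=\frac{\gamma}{\Phi(\gamma)}W^{(\gamma)}(u).
\]
Hence $h(0)\,\mathbb{P}\{M^+_{0,H_I}<u\mid I_T=a\}$ has transform $\frac{\gamma W^{(\gamma)}(u)}{\Phi(\gamma)Z_\gamma(u,\theta)}$. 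This identity is where the prefactor comes from, and inverting in $\theta$ gives the stated formula.

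\textbf{Post-infimum segment.} Your result is right, but the numerator asymptotics are only asserted (``should give''), and they are unnecessary. Since $X_{H_I}=a\le X_s$ on $[H_I,T]$, one has $M^+_{H_I,T}=S_{H_I,T}-a$ exactly, not merely ``dominated by''. The claim is therefore the recalled law of $S_{H_I,T}$ at $b=a+u$. The paper's limit $\lim_{x\downarrow a}\frac{h(a+u)}{h(x)}\mathbb{P}_x(\tau^+_{a+u}<\tau^-_a\wedge T)$ computes exactly this range event.

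If you keep your route, you must show that paths crossing $a+u$ with drawup below $u$ contribute $o(W^{(\gamma)}(x-a))$. You should also divide by $W^{(\gamma)}(x-a)$ rather than by $x-a$, since $W^{(\gamma)\prime}(0+)=\infty$ when $\sigma=0$.
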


\begin{proof}
Because $X$ has unbounded variation, zero is regular for both half-lines.  Conditional on $I_T=a$, Millar's decomposition implies that the minimum is attained at a unique time and $H_I=\tau_a$ almost surely; see \cite{Millar1977}.  The Markov property at $H_I$ then gives the conditional independence of the pre-$H_I$ and post-$H_I$ processes.

For the pre-infimum component, the conditional semigroup is obtained by differentiating the law of the terminal infimum with respect to the lower boundary.  For $x>a$, the corresponding excessive function is
\[
\frac{\mathrm d}{\mathrm da}\mathbb{P}_x\{I_T\leq a\}
=\frac{\gamma}{\Phi(\gamma)}W^{(\gamma)\prime}(x-a)-\gamma W^{(\gamma)}(x-a),
\]
which gives the stated Doob transform.  Applying the two-sided exit identity to this transform yields the distribution of $S_{H_I}$.  Since the pre-infimum path ends at $a$, the maximum-drawdown identity follows by applying the same exit at the level corresponding to a drawdown of size $d$.  This is the pre-infimum decomposition proved in \cite{VardarAcar2026}.
The post-infimum transform and the stated post-infimum extrema laws are those of \cite{VardarAcar2021}.  As novelty in this paper, it remains to record the two maximum-drawup derivations. 
And proving the maximum drawup law of the pre-infimum process uses the reflected process at the infimum and its regulator as follows, 
\begin{align*}
&\mathbb{P}\left\{M_{0,H_{I}}^{+}\leq u \mid I_{T}=a \right\}=\frac{1}{h(0)} \int_{[0,-a)} h(u-\ell) \eta_u^{(\gamma)}(d \ell), \quad a<0, u>0,
\end{align*}
with $\eta_u^{(\gamma)}(d \ell):=\mathbb{E}_0\left[e^{-\gamma \hat{\sigma}_u} ; L_{\hat{\sigma}_u} \in d \ell\right]$ being the law of regulator $L$ where $\hat{\sigma}_u:=\inf \left\{t \geq 0: Y_t \geq u\right\} \text{is the first drawup time with}$  $\hat{Y}_t:=X_t-I_t=X_t+L_t \text{~~is the reflected process at the infimum and its regulator}$ $L_t:=-I_t$ and $h$ excessive function is the the Doob $h$-transform of the pre-infimum process.

Starting from 0, 
$$
\int_0^{\infty} e^{-\theta \ell} \eta_u^{(\gamma)}(d \ell)=\mathbb{E}_0\left[e^{-\gamma \hat{\sigma}_u-\theta L_{\hat{\sigma}_u}}\right]=\frac{1}{Z_\gamma(u, \theta)} ,
$$ Theorem 2, \cite{Ivanovs2012}.
Here $Z_\gamma(u, \theta)$ denotes the generalized/two-parameter scale function where
$$
Z_\gamma(u, \theta)=e^{\theta u}\left[1+(\gamma-\psi(\theta)) \int_0^u e^{-\theta y} W^{(\gamma)}(y) d y\right]
$$
In particular,
$$
Z_\gamma(u, 0)=1+\gamma \int_0^u W^{(\gamma)}(y) d y=Z^{(\gamma)}(u)
$$
therefore, using the inverse Laplace transform $\mathcal{L}^{-1}$ we have,
$$\eta_u^{(\gamma)}(d \ell)=\mathcal{L}_{\theta \rightarrow -a}^{-1}\left\{\frac{1}{Z_\gamma(u, \theta)}\right\}$$
and hence the desired probability is,
\begin{align*}
\mathbb{P}\left\{M_{0,H_{I}}^{+}< u \mid I_{T}=a \right\}=\frac{\frac{\gamma W^{(\gamma)}(u)}{\Phi(\gamma)}\mathcal{L}_{\theta \rightarrow -a}^{-1}\left\{\frac{1}{Z_\gamma(u, \theta)}\right\}}{\frac{\gamma}{\Phi(\gamma)} W^{(\gamma)^{\prime}}(-a)-\gamma W^{(\gamma)}(-a)}.
\end{align*}
On the other hand, obtaining the law of the maximum drawup of the post-infimum process implements the two sided exit probabilities as follows:
\begin{align*}
\mathbb{P}\left\{M_{H_{I},T}^{+}\geq u \mid I_{T}=a \right\}&=\lim_{x\to a}\mathbb{P}_x^h\left(\tau_{a+u}^{+}<T,\tau_{a+u}^{+}<\tau_{a}^{-}\right)
=\lim_{x\to a}\frac{h(a+u)}{h(x)}\mathbb{P}_x\left(\tau_{a+u}^{+}<T,\tau_{a+u}^{+}<\tau_{a}^{-}\right)\\&=\lim_{x\to a}\frac{1-Z^{(\gamma)}(u)+\frac{\gamma}{\Phi(\gamma)} W^{(\gamma)}(u)}{1-Z^{(\gamma)}(x-a)+\frac{\gamma}{\Phi(\gamma)} W^{(\gamma)}(x-a)}[\frac{W^{(\gamma)}(x-a)}{W^{(\gamma)}(u)}]
=1-
\frac{\Phi(\gamma)\bigl(Z^{(\gamma)}(u)-1\bigr)}
{\gamma W^{(\gamma)}(u)}
\end{align*} also $0\leq\mathbb{P}\left\{M_{H_{I},T}^{+}\geq u \mid I_{T}=a \right\} \leq 1$ and $\lim_{u\to 0}\mathbb{P}\left\{M_{H_{I},T}^{+}\geq u \mid I_{T}=a \right\}=1. $
Note that \begin{eqnarray*}\mathbb{P}_0\left\{M_{T}^{+}< u \right\}&=& \int_{-\infty}^0 \mathbb{P}\left(M_{0, H_I}^{+}<u \mid I_T=a\right) \mathbb{P}\left(M_{H_I, T}^{+}<u \mid I_T=a\right) f_{I_T}(a) da\\
&=&\int_{-\infty}^{0} \frac{\frac{\gamma W^{(\gamma)}(u)}{\Phi(\gamma)}\mathcal{L}_{\theta \rightarrow -a}^{-1}\left\{\frac{1}{Z_\gamma(u, \theta)}\right\}}{\frac{\gamma}{\Phi(\gamma)} W^{(\gamma)^{\prime}}(-a)-\gamma W^{(\gamma)}(-a)}\frac{\Phi(\gamma)\bigl(Z^{(\gamma)}(u)-1\bigr)}
{\gamma W^{(\gamma)}(u)}\frac{\gamma}{\Phi(\gamma)} W^{(\gamma)^{\prime}}(-a)-\gamma W^{(\gamma)}(-a)da\\
&=&\frac{Z^{(\gamma)}(u)-1}{Z^{(\gamma)}(u)}\end{eqnarray*}
which recovers the well-known formula of the maximum drawup of the entire process, Prop.2 \cite{Pistorious2004}. \end{proof}

\section{Path decomposition under $H_S<H_I$}\label{sec:HS-before-HI}

In our way to finding the joint distribution of the maximum drawdown and maximum drawup up to exponentially distributed time $T$ taken independent of the underlying path, we decompose the process in to three parts under the condition of the values of the infimum and the supremum and the ordering that the supremum is attained before the infimum, that is the event ${H_S<H_I, I_T=a, S_T=b},$ for $a \leq 0 \leq b.$  The path is decomposed into the pre-supremum, the intermediate, and the post-infimum components, and the characterizations of each one of these are provided in the following Theorem \ref{Prop_H_S<H_I}. Note that the spectrally negative Levy process has no positive jumps; therefore, the reversed order of the extrema decomposition can not be obtained from symmetry; each excessive function must be identified carefully and separately.
\begin{theorem}\label{Prop_H_S<H_I}
Fix $a<0<b$. Under $\mathbb{P}_{a,b}^{>}$, the pre-$H_S$ process, the intermediate process, and the post-$H_I$ process are independent and their laws are as follows;
\begin{enumerate}[label=\textup{(\roman*)}]
\item The pre-supremum process is the Doob transform of the spectrally negative L\'{e}vy process killed when it reaches $b$ before the exponential time $T$, with excessive function
\[
h^*(x)=\frac{\gamma}{\Phi(\gamma)}W^{(\gamma)}(x-a)
\frac{\left(W^{(\gamma)\prime}(b-a)\right)^2-W^{(\gamma)}(b-a)W^{(\gamma)\prime\prime}(b-a)}
{\left(W^{(\gamma)}(b-a)\right)^2},
\qquad a<x<b.
\]

\item The intermediate process is the Doob transform of the post-supremum process killed at $\tau_a$, with excessive function
\[
h^{**}(x)=\frac{\left(Z^{(\gamma)}(b-a)-1\right)
\left[
W^{(\gamma)\prime}(x-a)W^{(\gamma)}(b-a)
-
W^{(\gamma)}(x-a)W^{(\gamma)\prime}(b-a)
\right]}
{\left(W^{(\gamma)}(b-a)\right)^2},
\qquad a<x<b.
\]

\item The post-infimum process is the Doob transform of the spectrally negative L\'{e}vy process issued from $a$ and killed at $T\wedge\tau_b^+\wedge\tau_a^-$, with excessive function
\[
h^{***}(x)=1-Z^{(\gamma)}(x-a)
+\left(Z^{(\gamma)}(b-a)-1\right)
\frac{W^{(\gamma)}(x-a)}{W^{(\gamma)}(b-a)},
\qquad a<x<b.
\]
\end{enumerate}
\end{theorem}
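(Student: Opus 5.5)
The plan is to build the three-piece decomposition on top of Theorem~\ref{lemma_pre_post_sup_dist}: first split at $H_S$, then split the post-supremum piece at its own infimum time. Conditionally on $S_T=b$, Theorem~\ref{lemma_pre_post_sup_dist}(a) makes the pre-$H_S$ and post-$H_S$ processes independent. On $\{H_S<H_I\}$ we have $I_T=I_{H_S,T}$, and since $I_{H_S}>a$ is required, the event $\{H_S<H_I,I_T\in da\}$ factorizes as $\{I_{H_S}>a\}\cap\{I_{H_S,T}\in da\}$. The first factor is measurable with respect to the pre-supremum path and the second with respect to the post-supremum path. Thus conditioning on this event preserves independence of pre- and post-$H_S$ pieces.

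For the pre-supremum piece, the law is that of Theorem~\ref{lemma_pre_post_sup_dist}(b), further conditioned on $\{I_{H_S}>a\}$. This is the $h$-transform of $X$ killed at $T\wedge\tau_b^+\wedge\tau_a^-$, with excessive function $x\mapsto e^{-\Phi(\gamma)(b-x)}\,\mathbb{P}^{\Phi(\gamma)}_x(\tau_b^+<\tau_a^-)$. Using $W_{\Phi(\gamma)}(y)=e^{-\Phi(\gamma)y}W^{(\gamma)}(y)$, this reduces to $W^{(\gamma)}(x-a)/W^{(\gamma)}(b-a)$. The $b$-dependent constant in $h^*$, which contains $(W^{(\gamma)\prime})^2-W^{(\gamma)}W^{(\gamma)\prime\prime}$, is irrelevant to the $h$-transform as a multiplicative normalisation. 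I will nevertheless derive it, since it is the density contribution: joint conditioning on $S_T\in db$ requires differentiating $W^{(\gamma)}(x-a)/W^{(\gamma)}(b-a)$-type quantities in $b$, and the derivative of $W^{(\gamma)\prime}/W^{(\gamma)}$ produces exactly that Wronskian-type expression.

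For the post-supremum piece, start from Theorem~\ref{lemma_pre_post_sup_dist}(c). It is the $h$-transform of $X$ killed at $T\wedge\tau_b^+$ with $h(x)=1-e^{-\Phi(\gamma)(b-x)}$, entered from $b-$. Conditioning it on $I_{H_S,T}\in da$ and splitting at the first hitting time $\tau_a$ (which equals $H_I$ by Millar's result, since $X$ has unbounded variation and creeps at the infimum) gives two pieces by the strong Markov property at $\tau_a$.

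The intermediate piece is the post-supremum process conditioned to reach $a$ before $T\wedge\tau_b^+$ and then have infimum exactly $a$ afterwards. Its excessive function is therefore $-\partial_a$ of $\mathbb{P}_x(\tau_a^-<T\wedge\tau_b^+)$-type probabilities. Concretely, take $g(x)=-\frac{\partial}{\partial a}\big[1-Z^{(\gamma)}(x-a)+(Z^{(\gamma)}(b-a)-1)W^{(\gamma)}(x-a)/W^{(\gamma)}(b-a)\big]$, i.e.\ the $a$-derivative of the probability of staying in $(a,b)$ until $T$ used in (\ref{postsupinf}). Computing this derivative, and discarding terms that vanish after combining with the killing at $\tau_a$, should produce $(Z^{(\gamma)}(b-a)-1)\big[W^{(\gamma)\prime}(x-a)W^{(\gamma)}(b-a)-W^{(\gamma)}(x-a)W^{(\gamma)\prime}(b-a)\big]/W^{(\gamma)}(b-a)^2$, which is $h^{**}$.

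The post-$H_I$ piece, started at $a$, must stay in $(a,b)$ until $T$. By the Markov property this is the $h$-transform of $X$ killed at $T\wedge\tau_b^+\wedge\tau_a^-$ with $h^{***}(x)=\mathbb{P}_x(T<\tau_a^-\wedge\tau_b^+)$. The two-sided identities (\ref{eq:two-sided-up})--(\ref{eq:two-sided-down}) give exactly the stated formula, as in the numerator of (\ref{postsupinf}). The entrance law at $a+$ is defined by the same L'H\^opital limit.

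Three-way independence then follows from two applications of the Markov property: at $H_S$ via Theorem~\ref{lemma_pre_post_sup_dist}(a), and at $\tau_a=H_I$ within the post-supremum piece. Excessivity of each $h$ follows because each is a killed entrance probability or its derivative in $a$, hence a positive limit of excessive functions.

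The main obstacle is the intermediate piece. The derivative in $a$ mixes $Z^{(\gamma)\prime}=\gamma W^{(\gamma)}$ terms with the Wronskian terms, and I must justify that only the bracketed combination survives as the density of hitting $a$ continuously before $T\wedge\tau_b^+$. I would first try to identify $h^{**}(x)$ as $\lim_{\epsilon\downarrow0}\epsilon^{-1}\mathbb{P}_x(\tau_{a+\epsilon}^-<T\wedge\tau_b^+,\ \text{then } I\in[a,a+\epsilon))$. That limit can be checked against the Brownian formulas (\ref{eq:brownian-scale-functions}) to confirm the normalisation.
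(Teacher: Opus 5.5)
Your proposal is correct. Parts (ii) and (iii) follow the paper's argument. The paper's $h^{**}(x)=\bigl(1-e^{-\Phi(\gamma)(b-x)}\bigr)\frac{\mathrm{d}}{\mathrm{d}a}\mathbb{P}_x\bigl(I_{H_S,T}\le a\mid S_T=b\bigr)$ is exactly your $g(x)=-\partial_a\mathbb{P}_x(T<\tau_a^-\wedge\tau_b^+)$, because the prefactor does not depend on $a$. The paper also obtains $h^{***}$ from the two-sided exit identities, as you do.

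The genuine difference is in part (i).

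\begin{itemize}
\item \emph{The paper's route.} It starts from the pre-infimum transform $h_1$ of Theorem~\ref{Prop_pre_post_inf_dist} and conditions the pre-infimum path to have supremum $b$. It then takes $h^*(x)=-\partial_b\bigl[h_1(b)W^{(\gamma)}(x-a)/W^{(\gamma)}(b-a)\bigr]$. The Wronskian-type constant comes from the $b$-derivative of $W^{(\gamma)\prime}(b-a)/W^{(\gamma)}(b-a)$, which is hidden in $h_1(b)/W^{(\gamma)}(b-a)$.
\item \emph{Your route.} You stay inside the supremum decomposition of Theorem~\ref{lemma_pre_post_sup_dist}. Since $\{H_S<H_I,\,I_T\in\mathrm{d}a\}=\{I_{H_S}>a\}\cap\{I_{H_S,T}\in\mathrm{d}a\}$, the Esscher pre-supremum process is only conditioned on an event of positive probability. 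Composing transforms then gives $W^{(\gamma)}(x-a)/W^{(\gamma)}(b-a)=\mathbb{P}_x(\tau_b^+<\tau_a^-\wedge T)$. This is a positive multiple of $h^*$, so it defines the same law.
\end{itemize}

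What your route buys:
\begin{itemize}
\item It avoids the singular conditioning in $b$ and the reliance on the pre-infimum decomposition of \cite{VardarAcar2026}.
\item It makes positivity of $h$ evident. The paper's constant is positive only because $W^{(\gamma)\prime}/W^{(\gamma)}$ is decreasing.
\item It builds all three pieces from one decomposition plus the Markov property at $H_S$ and at $\tau_a=H_I$. The three-way independence then comes out directly rather than being asserted.
\end{itemize}
What the paper's route buys is the specific normalisation of $h^*$, which reappears in $p^{>}_{IS}$ through the infimum decomposition.

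Two corrections to side remarks in your proposal.

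First, the obstacle you anticipate in (ii) does not exist. Differentiating in $a$, the term $-Z^{(\gamma)}(x-a)$ contributes $\gamma W^{(\gamma)}(x-a)$. The factor $Z^{(\gamma)}(b-a)-1$ contributes $-Z^{(\gamma)\prime}(b-a)W^{(\gamma)}(x-a)/W^{(\gamma)}(b-a)=-\gamma W^{(\gamma)}(x-a)$. These cancel identically, so $g=h^{**}$ with nothing discarded. No $\epsilon$-limit or Brownian normalisation check is needed.

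Second, the $b$-derivative of $W^{(\gamma)}(x-a)/W^{(\gamma)}(b-a)$ contains no $W^{(\gamma)\prime\prime}$. In your framework the Wronskian factor appears instead in the entrance law of the intermediate piece at $b-$:
\[
\lim_{x\to b}\frac{h^{**}(x)}{1-e^{-\Phi(\gamma)(b-x)}}=\frac{\bigl(Z^{(\gamma)}(b-a)-1\bigr)\bigl[\bigl(W^{(\gamma)\prime}(b-a)\bigr)^2-W^{(\gamma)}(b-a)W^{(\gamma)\prime\prime}(b-a)\bigr]}{\Phi(\gamma)\bigl(W^{(\gamma)}(b-a)\bigr)^2}.
\]
Multiplied by $\Phi(\gamma)e^{-\Phi(\gamma)b}$ and $e^{\Phi(\gamma)b}W^{(\gamma)}(-a)/W^{(\gamma)}(b-a)$, this reproduces the same joint density. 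The misattribution is harmless for the statement, since constant multiples do not change a Doob transform.
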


\begin{proof}
i. For the pre-supremum component, we start with the pre-infimum law in Theorem~\ref{Prop_pre_post_inf_dist} and the further condition that the path reaches the prescribed supremum level $b$ before the infimum level $a$. That is,
For $  a<x,y<b$, we have
\begin{align*}  
P_t(x,\mathrm{d}y) &=  \mathbb{P}_x \{X_t\in \mathrm{d}y, t<H_S\, |\, H_S<H_I, S_T =b, I_T=a\} \\
&= \frac{ \mathbb{P}_x \{X_t\in \mathrm{d}y, t<H_S, H_S<H_I, S_T = b \, | \,I_T=a\} }{\mathbb{P}_x \{ H_S<H_I, S_T = b \, | \,I_T=a\}     }  \\
&= \frac{ \frac{\mathrm{d}}{\mathrm{d}b}\mathbb{P}_x^{h_1} \{ S_{\tau_a} \geq b \, | X_t\in \mathrm{d}y, t<\tau_b^+
\} \mathbb{P}_x^{h_1} \{X_t\in \mathrm{d}y, t<\tau_b^+\}}{\frac{\mathrm{d}}{\mathrm{d}b} \mathbb{P}_x^{h_1} \{  S_{\tau_a} \geq b \} }\\
&= \frac{ \frac{\mathrm{d}}{\mathrm{d}b}\mathbb{P}_y^{h_1} \{ S_{\tau_a} 
\geq b   \}} {\frac{\mathrm{d}}{\mathrm{d}b}\mathbb{P}_x^{h_1} \{ S_{\tau_a} \geq b  \}     } \mathbb{P}_x^{h_1} \{X_t\in \mathrm{d}y, t<\tau_b^+\}\\
&=\frac{\frac{\mathrm{d}}{\mathrm{d}b} \mathbb{P}_y^{h_1} \{S_{\tau_a} \geq b\} }{\frac{\mathrm{d}}{\mathrm{d}b}\mathbb{P}_x^{h_1} \{ S_{\tau_a} \geq b \}     } \frac{\frac{\gamma}{\Phi(\gamma)} W^{(\gamma)\prime}(y-a) -\gamma W^{(\gamma)}(y-a) }{\frac{\gamma}{\Phi(\gamma)} W^{(\gamma)\prime}(x-a) -\gamma W^{(\gamma)}(x-a) } \mathbb{P}_{x}\left\{X_{t} \in \mathrm{d} y, t<\tau_{b^+}\right\}.
\end{align*}
By \cite{VardarAcar2026},
$$
\mathbb{P}_x^{h_1} \{ S_{\tau_a} \geq b \}=\frac{ \frac{\gamma}{\Phi(\gamma)} W^{(\gamma)\prime}(b-a) -\gamma W^{(\gamma)}(b-a) }{\frac{\gamma}{\Phi(\gamma)} W^{(\gamma)\prime}(x-a) -\gamma W^{(\gamma)}(x-a)}\left[\frac{W^{(\gamma)}(x-a)}{W^{(\gamma)}(b-a)}\right]
$$ 

Consequently,
\begin{align*}
h^*(x)&=-\frac{\mathrm{d}}{\mathrm{d}b}\mathbb{P}_x^{h_1} \{ S_{\tau_a} \geq b \}h_1(x)=-\frac{\mathrm{d}}{\mathrm{d}b}\mathbb{P}_x^{h_1} \{ S_{\tau_a} \geq b \}[\frac{\gamma}{\Phi(\gamma)} W^{(\gamma)\prime}(x-a) -\gamma W^{(\gamma)}(x-a)]\\
&=\frac{\gamma}{\Phi(\gamma)} W^{(\gamma)}(x-a) 
\frac{ \left(W^{(\gamma)\prime}(b-a)\right)^2 - W^{(\gamma)}(b-a)W^{(\gamma)\prime\prime}(b-a) }{ \left(W^{(\gamma)}(b-a)\right)^2} 
\end{align*}
Here,
$h_1(x)=\frac{\gamma}{\Phi(\gamma)}W^{(\gamma)\prime}(x-a)
-\gamma W^{(\gamma)}(x-a)$ is the excessive function for the pre-infimum
process.

In summary, differentiating the corresponding conditional supremum law with respect to $b$ and multiplying by the pre-infimum excessive function gives
\[
\frac{\gamma}{\Phi(\gamma)}W^{(\gamma)}(x-a)
\frac{\left(W^{(\gamma)\prime}(b-a)\right)^2-W^{(\gamma)}(b-a)W^{(\gamma)\prime\prime}(b-a)}
{\left(W^{(\gamma)}(b-a)\right)^2},
\]
which proves the first assertion.

ii. When $H_S<H_I,S_{T}=b,I_{T}=a$, the transition semigroup of the intermediate process in view of Theorem \ref{lemma_pre_post_sup_dist}, for $a<x,y<b$ is  
$$
\begin{aligned}
&P_t(x, \mathrm{~d} y)=\frac{\mathbb{P}_x\left\{X_t \in \mathrm{~d} y, t<\tau_a, I_{H_S,T} \leq a, S_T=b\right\}}{\mathbb{P}_x\left\{I_{H_S,T} \leq a, S_T=b\right\}} \\
& =\frac{\mathbb{P}_x\left\{I_{H_S,T} \leq a, S_T=b\mid X_t=y, t<\tau_a\right\} \mathbb{P}_x^h\left\{X_t \in \mathrm{~d} y, t<\tau_a\right\}}{\mathbb{P}_x\left\{I_{H_S,T} \leq a, S_T=b\right\}} \\
&=\frac{\mathbb{P}_y\left\{I_{\tau_b^+,T} \leq a\mid S_T=b\right\} (1-e^{-\Phi(\gamma)(b-y)})\mathbb{P}_x\left\{X_t \in \mathrm{~d} y, t<\tau_a\right\}}{(1-e^{-\Phi(\gamma)(b-x)})\mathbb{P}_x\left\{I_{\tau_b^+,T} \leq a\mid S_T=b\right\}} 
\end{aligned}
$$
where $h(x)=1-e^{-\Phi(\gamma)(b-x)}, x \leq b$ is the \(h\)-transform of post-supremum process. 

By Theorem \ref{lemma_pre_post_sup_dist} given above, we have
\begin{align}
\mathbb{P}_{x}\left(I_{H_{S}, T} \leq a \mid S_{T}=b\right)=1-\frac{1-Z^{(\gamma)}(x-a)+\left(Z^{(\gamma)}(b-a)-1\right) \frac{W^{(\gamma)}(x-a)}{W^{(\gamma)}(b-a)}}{1-e^{-\Phi(\gamma)(b-x)}}
\end{align}

\begin{align}
\frac{\mathrm{d}}{\mathrm{d}a}\mathbb{P}_{x}\left(I_{H_{S}, T} \leq a \mid S_{T}=b\right)=\frac{
\left(Z^{(\gamma)}(b-a)-1\right)
\left[
W^{(\gamma)\prime}(x-a)W^{(\gamma)}(b-a)
-
W^{(\gamma)}(x-a)W^{(\gamma)\prime}(b-a)
\right]
}{
\left(1-e^{-\Phi(\gamma)(b-x)}\right)
\left(W^{(\gamma)}(b-a)\right)^2
}\nonumber
\end{align}
As a result, the distribution of the intermediate process is Doob $h$-transform of the law of the original Lévy process killed at $\tau_a$ with
\begin{align}
h^{**}(x)&:=\frac{\mathrm{d}}{\mathrm{d}a}\mathbb{P}_{x}\left(I_{H_{S}, T} \leq a \mid S_{T}=b\right)\left(1-e^{-\Phi(\gamma)(b-x)}\right)\nonumber\\
&= \frac{
\left(Z^{(\gamma)}(b-a)-1\right)
\left[
W^{(\gamma)\prime}(x-a)W^{(\gamma)}(b-a)
-
W^{(\gamma)}(x-a)W^{(\gamma)\prime}(b-a)
\right]
}{
\left(W^{(\gamma)}(b-a)\right)^2
}.
\end{align} 
iii. Finally, the post-infimum component is the original process conditioned to remain inside $(a,b)$ until the exponential killing time.  By the two-sided exit identities,
$$\begin{aligned}
		& h^{***}(x)=\mathbb{P}_{x}\left\{T<\tau_{b}^{+} \wedge \tau_{a}^{-}\right\} =\mathbb{P}_{x}\left\{\tau_{b}^{+}>T \right\}-\mathbb{P}_{x}\left\{\tau_{a}^{-}<T, \tau_{b}^{+}>T\right\}\\&=\mathbb{P}_{x}\left\{\tau_{b}^{+}>T \right\}-\mathbb{P}_{x}\left\{\tau_{a}^{-}<T\right\}+\mathbb{P}_{x}\left\{\tau_{a}^{-}<T, \tau_{b}^{+}<T\right\}\\
		& =\mathbb{P}_{x}\left\{\tau_{b}^{+}>T \right\}-\mathbb{P}_{x}\left\{\tau_{a}^{-}<T\right\}+\mathbb{P}_{x}\left\{\tau_{a}^{-}<T, \tau_{b}^{+}<\tau_{a}^{-}\right\}+\mathbb{P}_{x}\left\{\tau_{b}^{+}<T, \tau_{a}^{-}<\tau_{b}^{+}\right\} \\
        &=1-\mathbb{P}_{x}\left\{\tau_{a}^{-}<T, \tau_{a}^{-}<\tau_{b}^{+}\right\}-\mathbb{P}_{x}\left\{\tau_{b}^{+}<T, \tau_{b}^{+}<\tau_{a}^{-}\right\}\\
        &=1- [{Z^{(\gamma)}(x-a) - Z^{(\gamma)}(b-a) \frac{W^{(\gamma)}(x-a)}{W^{(\gamma)}(b-a)}}]-[\frac{W^{(\gamma)}(x-a)}{W^{(\gamma)}(b-a)}]\\
        &=1-Z^{(\gamma)}(x-a)+\left(Z^{(\gamma)}(b-a)-1\right) \frac{W^{(\gamma)}(x-a)}{W^{(\gamma)}(b-a)},
		\end{aligned}
	$$
In other words for, $a<x,y<b$, we have
\begin{align*}  
P_t(x,\mathrm{d}y) &=  \mathbb{P}_x \{X_t\in \mathrm{d}y, t<T<\tau_{b}^{+} \wedge \tau_{a}^{-}\, |\, H_S<H_I, S_T =b, I_T=a\} \\
&= \frac{ \mathbb{P}_y \{ T<\tau_{b}^{+} \wedge \tau_{a}^{-}\, | X_t\in \mathrm{d}y, t<T<\tau_{b}^{+} \wedge \tau_{a}^{-}\,
\} \mathbb{P}_x \{X_t\in \mathrm{d}y, t<T<\tau_{b}^{+} \wedge \tau_{a}^{-}\,\}}{\mathbb{P}_x \{ T<\tau_{b}^{+} \wedge \tau_{a}^{-} \} }\\
&= \frac{ \mathbb{P}_y \{  T<\tau_{b}^{+} \wedge \tau_{a}^{-}  \}} {\mathbb{P}_x \{ T<\tau_{b}^{+} \wedge \tau_{a}^{-} \}     } \mathbb{P}_x \{X_t\in \mathrm{d}y, t<T<\tau_{b}^{+} \wedge \tau_{a}^{-}\,\}\\
&=\frac{1-Z^{(\gamma)}(y-a) + (Z^{(\gamma)}(b-a)-1) \frac{W^{(\gamma)}(y-a)}{W^{(\gamma)}(b-a)}
        }{1-Z^{(\gamma)}(x-a) + (Z^{(\gamma)}(b-a)-1) \frac{W^{(\gamma)}(x-a)}{W^{(\gamma)}(b-a)}}
        \mathbb{P}_{x}\left\{X_{t} \in \mathrm{d} y, t<T<\tau_{b}^{+} \wedge \tau_{a}^{-}\,\right\}\\
        &=\frac{h^{* * *}(y)}{h^{* * *}(x)} \mathbb{P}_x\left\{X_t \in d y, t<T \wedge \tau_a^{-} \wedge \tau_b^{+}\right\}.
\end{align*} 
The conditional independence follows from the preceding splitting-at-the-$I_T/S_T$ decomposition together with the Markov property of the corresponding conditioned components.
\end{proof}

\begin{remark} For standard Brownian motion reflecting the path using symmetry does not change its distribution but reverses the ordering of the infimum and the supremum.  Substituting \eqref{eq:brownian-scale-functions} into the three excessive functions in Theorem \ref{Prop_H_S<H_I}, for the pre-supremum component,
\[
2\gamma\,
\frac{\sinh\!\left(\sqrt{2\gamma}(x-a)\right)}
{\sinh^2\!\left(\sqrt{2\gamma}(b-a)\right)},
\qquad a<x<b,
\]
whose logarithmic derivative is
\[
\sqrt{2\gamma}\,\coth\!\left(\sqrt{2\gamma}(x-a)\right).
\]
For the intermediate component the logarithmic derivative reduces to
\[
-\sqrt{2\gamma}\,\coth\!\left(\sqrt{2\gamma}(b-x)\right),
\]
and the post-infimum excessive function becomes
\[
\frac{
\sinh\!\left(\sqrt{2\gamma}(b-a)\right)
-\sinh\!\left(\sqrt{2\gamma}(x-a)\right)
-\sinh\!\left(\sqrt{2\gamma}(b-x)\right)
}{
\sinh\!\left(\sqrt{2\gamma}(b-a)\right)
}.
\]
Therefore, the generators of the h-transforms of each component coincide with the reflected counterparts of the Brownian decomposition found in \cite{Salminen2007}.
\end{remark}
\section{Path decomposition under $H_I<H_S$}\label{sec:HI-before-HS}

We next condition on both values of the supremum and the infimum and on the ordering in which the infimum is attained before the supremum, that is fix $a<0<b$ and work under $\mathbb{P}_{a,b}^{<}$. Here, the path splits into the pre-infimum, the intermediate component from $a$ to $b$, and the post-supremum component.  
\begin{theorem}\label{prop:HI-before-HS}
Under $\mathbb{P}_{a,b}^{<}$, the pre-$H_I$ process, the intermediate process
\[
\{X_{H_I+u}:0\leq u\leq H_S-H_I\},
\]
and the post-$H_S$ process are independent. Their laws are as follows.
\begin{enumerate}[label=\textup{(\roman*)}]
\item The pre-infimum process is the Doob transform of the process killed on leaving $(a,b)$ through the lower boundary, with
\begin{equation}\label{eq:h1-HIHS}
h_1(x)=
\frac{W^{(\gamma)\prime}(x-a)W^{(\gamma)}(b-a)-W^{(\gamma)\prime}(b-a)W^{(\gamma)}(x-a)}
{\bigl(W^{(\gamma)}(b-a)\bigr)^2},
\qquad a<x<b.
\end{equation}

\item The intermediate process has the law of $a+\Gamma$, where $\Gamma$ is the Doob transform of a spectrally negative L\'{e}vy process killed at $\tau_{b-a}^+\wedge\tau_0^-$, with
\begin{equation}\label{eq:h2-HIHS}
h_2(x)=e^{-\Phi(\gamma)(b-a-x)}\frac{W^{(\gamma)}(x)}{W^{(\gamma)}(b-a)},
\qquad 0<x<b-a.
\end{equation}

\item The post-$H_S$ process is the Doob transform of the process killed on leaving $(a,b)$ before $T$, with
\begin{equation}\label{eq:h3-HIHS}
h_3(x)=1-Z^{(\gamma)}(x-a)+\bigl(Z^{(\gamma)}(b-a)-1\bigr)
\frac{W^{(\gamma)}(x-a)}{W^{(\gamma)}(b-a)},
\qquad a<x<b.
\end{equation}
\end{enumerate}
\end{theorem}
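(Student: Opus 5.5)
We follow the template of the proof of Theorem~\ref{Prop_H_S<H_I}, now starting from the decomposition at the infimum (Theorem~\ref{Prop_pre_post_inf_dist}) and imposing the extra conditioning $\{H_I<H_S,\,S_T=b\}$. By Theorem~\ref{Prop_pre_post_inf_dist}, conditionally on $I_T=a$ the pre-$H_I$ and post-$H_I$ pieces are independent and $H_I=\tau_a$ a.s. On the event $H_I<H_S$ the supremum $b$ is attained after $\tau_a$. Hence the pre-$H_I$ piece is only required to satisfy $S_{H_I}<b$, while the post-$H_I$ piece carries the whole event $\{S_{H_I,T}\in \mathrm{d}b\}$. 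We then split the post-$H_I$ piece at its own supremum, using Theorem~\ref{lemma_pre_post_sup_dist} applied to the post-infimum $h$-transformed process, i.e.\ to the original process killed at $T\wedge\tau_a^-$. Conditional independence of the three pieces follows from the Markov property at $\tau_a$ and at $\tau_b^+$, since $H_S=\tau_b^+$ after $H_I$ by creeping. Each conditioning event depends only on its own piece, so the conditional semigroups factor.

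\emph{Pre-infimum piece.} For $a<x,y<b$ we write
\[
P_t(x,\mathrm{d}y)=\frac{\mathbb{P}^{h}_y\{S_{\tau_a}<b\}}{\mathbb{P}^{h}_x\{S_{\tau_a}<b\}}\,\frac{h(y)}{h(x)}\,\mathbb{P}_x\{X_t\in \mathrm{d}y,t<\tau_a\wedge\tau_b^+\},
\]
where $h$ is the pre-infimum excessive function of Theorem~\ref{Prop_pre_post_inf_dist}. This is not yet enough: we must also retain the factor coming from the density in $b$ of the post-$H_I$ supremum. That factor does not depend on $x$, because the post-$H_I$ piece starts from $a$, so it cancels. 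The candidate is therefore
\[
h(x)\,\mathbb{P}^h_x\{S_{\tau_a}<b\}=h(x)-\bigl(\tfrac{\gamma}{\Phi(\gamma)}W^{(\gamma)\prime}(b-a)-\gamma W^{(\gamma)}(b-a)\bigr)\tfrac{W^{(\gamma)}(x-a)}{W^{(\gamma)}(b-a)}.
\]
This does not match $h_1$. The stated $h_1$ is precisely $\partial_a$ of $W^{(\gamma)}(x-a)/W^{(\gamma)}(b-a)$ up to sign, i.e.\ the density in $a$ of $\mathbb{P}_x\{\tau_a^-<\tau_b^+\}$ with the exponential killing absorbed. So we would instead compute $-\frac{\mathrm{d}}{\mathrm{d}a}\mathbb{P}_x\{I_{\tau_b^+}\le a\}$. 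The natural route is to write the joint event $\{I_T\in \mathrm{d}a,\,H_I<H_S,\,S_T\in \mathrm{d}b\}$ from $x$ via the Markov property at $\tau_b^+$, where time $T$ has not yet expired. The $x$-dependence then comes only from $\mathbb{P}_x\{\tau_b^+<\tau_a^-\}$-type quantities together with the $e^{-\gamma\tau_b^+}$ discount. Which of these exactly produces $h_1$ (with or without $\gamma$ in the scale functions) is the main obstacle, and reconciling the two candidate formulas above is where we expect to spend effort. We would check the Brownian case against \cite{Salminen2007}, where $h_1$ should give drift $\sqrt{2\gamma}\coth(\sqrt{2\gamma}(x-a))$-type terms.

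\emph{Intermediate and post-$H_S$ pieces.} Starting at $a$ (via the entrance law $x\downarrow a$) and conditioned to hit $b$ before returning below $a$ and before $T$, the intermediate piece has semigroup ratio $\mathbb{E}_y[e^{-\gamma\tau_b^+};\tau_b^+<\tau_a^-]/\mathbb{E}_x[\cdot]=W^{(\gamma)}(y-a)/W^{(\gamma)}(x-a)$ by \eqref{eq:two-sided-up}. We combine this with the Esscher factor $e^{-\Phi(\gamma)(b-x)}$ from the pre-supremum part of Theorem~\ref{lemma_pre_post_sup_dist}(b), and shift by $a$ to get $h_2$. The post-$H_S$ piece must remain in $(a,b)$ until $T$; exactly as in Theorem~\ref{Prop_H_S<H_I}(iii), the two-sided identities \eqref{eq:two-sided-up}--\eqref{eq:two-sided-down} give $h_3(x)=\mathbb{P}_x\{T<\tau_a^-\wedge\tau_b^+\}$. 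In both cases we verify the Doob ratio form $P_t=\frac{h(y)}{h(x)}\mathbb{P}_x\{X_t\in\mathrm{d}y,\,t<\zeta\}$ directly.
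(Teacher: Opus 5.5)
\textbf{Pre-infimum piece.} Your argument here is correct, but you abandon it because of a missed simplification, so as written part (i) is left open. Given $I_T=a$, the event $\{H_I<H_S,\ S_T\in\mathrm{d}b\}$ splits as $\{S_{H_I}<b\}\cap\{S_{H_I,T}\in\mathrm{d}b\}$. The second factor is independent of the pre-$H_I$ piece. So the Doob function is indeed $h(x)\mathbb{P}^h_x\{S_{\tau_a}<b\}$, with $h(x)=\frac{\gamma}{\Phi(\gamma)}W^{(\gamma)\prime}(x-a)-\gamma W^{(\gamma)}(x-a)$. In your expression the two terms $\pm\gamma W^{(\gamma)}(x-a)$ cancel:
\[
h(x)-h(b)\frac{W^{(\gamma)}(x-a)}{W^{(\gamma)}(b-a)}
=\frac{\gamma}{\Phi(\gamma)}\Bigl(W^{(\gamma)\prime}(x-a)-W^{(\gamma)\prime}(b-a)\frac{W^{(\gamma)}(x-a)}{W^{(\gamma)}(b-a)}\Bigr)
=\frac{\gamma W^{(\gamma)}(b-a)}{\Phi(\gamma)}\,h_1(x).
\]
A Doob transform does not see constant factors, so this proves (i). Your second route gives the same function, since $h_1(x)=\partial_a\mathbb{P}_x\{\tau_b^+<T,\ I_{\tau_b^+}\le a\}=-\partial_a\bigl(W^{(\gamma)}(x-a)/W^{(\gamma)}(b-a)\bigr)$. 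Note the sign: it is $+\partial_a$ of the distribution function, and the event must include $\tau_b^+<T$. The two routes differ only by $x$-independent entrance factors. Also keep $t<T$ in the reference kernel, because $h_1$ is $\gamma$-harmonic on $(a,b)$, not harmonic.

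\textbf{Intermediate piece.} The genuine gap is in (ii). Your ratio $W^{(\gamma)}(y-a)/W^{(\gamma)}(x-a)$ against $\mathbb{P}_x\{X_t\in\mathrm{d}y,\ t<T\wedge\tau_a^-\wedge\tau_b^+\}$ is already the complete answer. Multiplying by the $x$-dependent factor $e^{-\Phi(\gamma)(b-x)}$ changes the law unless you also change the reference semigroup, and you do not say which one you mean.
\begin{itemize}
\item It cannot be the $\gamma$-killed process. There the transformed probability of reaching $b$ from $x$ would be $e^{\Phi(\gamma)(b-x)}>1$, so the product is not excessive.
\item It is not the Esscher process of Theorem~\ref{lemma_pre_post_sup_dist}(b) either. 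Since $\mathbb{P}^{\Phi(\gamma)}_x\{X_t\in\mathrm{d}y\}=e^{\Phi(\gamma)(y-x)-\gamma t}\mathbb{P}_x\{X_t\in\mathrm{d}y\}$, the correct function there is the scale function $e^{-\Phi(\gamma)x}W^{(\gamma)}(x)$ of $X$ under $\mathbb{P}^{\Phi(\gamma)}$. After the shift this is $e^{+\Phi(\gamma)(b-a-x)}W^{(\gamma)}(x)/W^{(\gamma)}(b-a)$, which has the opposite sign in the exponent.
\end{itemize}
The displayed $h_2$ yields the right law only against the kernel $e^{-\Phi(\gamma)(y-x)-\gamma t}\mathbb{P}_x\{X_t\in\mathrm{d}y,\ t<\tau_0^-\wedge\tau^+_{b-a}\}$. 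So your ``combine with the Esscher factor'' step matches $h_2$ only formally. You need to fix the reference process explicitly and derive the function from it. Note that it is the $\gamma$-killed version $W^{(\gamma)}(x)/W^{(\gamma)}(b-a)$ that reproduces the paper's own Theorem~\ref{theorem:mdd-HIHS}(ii).

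\textbf{Remaining parts.} Your post-$H_S$ piece and your independence argument are fine, and they match the paper's handling of the analogous pieces in Theorem~\ref{Prop_H_S<H_I}. The paper proves this theorem only by citing \cite{VardarAcar2026,VardarAcar2021}, so a completed version of your argument would be a self-contained alternative.
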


\begin{proof} For the proof and the characterization of the pre-$H_I$ process we refer the reader to \cite{VardarAcar2026}. And the characterizations of the intermediate and post-supremum components are proved in \cite{VardarAcar2021}.
\end{proof}
\section{Maximum drawups under $H_S<H_I$ and Maximum drawdowns under $H_I<H_S$ }

Having identified the three independent components of the spectrally negative Levy process conditioned under both events $I_T=a, S_T=b, H_S<H_I$ and ${I_T=a, S_T=b, H_I<H_S}$ in previous Sections~\ref{sec:HS-before-HI} and \ref{sec:HI-before-HS}, we now compute the maximum drawup and maximum drawdown laws on each segment, respectively. We begin our proof by finding the joint law of the maximum gain and the running supremum down to $\tau_a^-,$ $(M_{\tau_a^-}^{+},S_{\tau_a^-}),$ which will later be used for finding the maximum gain law of the intermediate process under the case where $H_S<H_I$.

\begin{proposition}\label{thm:q-drawup-supremum-tauminus}
Consider a spectrally negative L\'{e}vy process $X$ with $q$-scale functions $W^{(q)}$ and $Z^{(q)}$. Let $a<0$, $b>0$, $x\in[0,b]$, and $q\geq0$. Then, for $v>0$,
\[
\mathbb E_x\left[e^{-q\tau_a^-};\,M_{\tau_a^-}^{+}\leq v,\ S_{\tau_a^-}\leq b\right]
=
\begin{cases}
\displaystyle
e^{-\left((x-a)\frac{W_+^{(q)\prime}(v)}{W^{(q)}(v)}\right)},
& 0<v\leq b-x,\\[1.3em]
\displaystyle
e^{-\left((b-a-v)\frac{W_+^{(q)\prime}(v)}{W^{(q)}(v)}\right)}
\left[
Z^{(q)}(x-b+v)-Z^{(q)}(v)\frac{W^{(q)}(x-b+v)}{W^{(q)}(v)}
\right],
& b-x<v\leq b-a,\\[1.3em]
\displaystyle
Z^{(q)}(x-a)-Z^{(q)}(b-a)\frac{W^{(q)}(x-a)}{W^{(q)}(b-a)},
& b-a\leq v.
\end{cases}
\]
In particular,
\[
\mathbb E_x\left[e^{-q\tau_a^-};\,M_{\tau_a^-}^{+}\leq v\right]
=
\exp\!\left(-(x-a)\frac{W_+^{(q)\prime}(v)}{W^{(q)}(v)}\right).
\]
\end{proposition}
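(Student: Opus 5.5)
The plan is to prove the unconstrained identity (the ``in particular'' formula) first and then obtain the three cases by comparing the constraint $M_{\tau_a^-}^+\le v$ with $S_{\tau_a^-}\le b$. Throughout, the running infimum $I_t$ decreases from $x$ to $a$ on $[0,\tau_a^-]$, and the drawup is $\hat Y_t=X_t-I_t$.

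\textbf{Step 1 (unconstrained identity).} Set $F(y):=\mathbb E_y[e^{-q\tau_a^-};M_{\tau_a^-}^+\le v]$ for $y>a$. I will derive $F$ in the style of Lehoczky's drawdown argument, now applied to the process reflected at its infimum.
\begin{itemize}[label=\textendash]
\item Partition $[a,x]$ into levels $y_k=x-k\varepsilon$.
\item Apply the strong Markov property at the successive passage times $\tau_{y_k}^-$.
\item On each slab, the event ``drawup stays $\le v$ while the infimum descends from $y$ to $y-\varepsilon$'' is sandwiched between two two-sided exit events. The first is exit from $(y-\varepsilon,\,y-\varepsilon+v)$ through the bottom; the second is exit from $(y-\varepsilon,\,y+v)$ through the bottom.
\item The discounted probabilities of these events come from \eqref{eq:two-sided-down}.
\end{itemize}
This should yield a multiplicative relation $F(y)=\bigl(1-\varepsilon\,W_+^{(q)\prime}(v)/W^{(q)}(v)+o(\varepsilon)\bigr)F(y-\varepsilon)$. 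Letting $\varepsilon\downarrow0$ gives the ODE $F'(y)=\bigl(W_+^{(q)\prime}(v)/W^{(q)}(v)\bigr)F(y)$ with $F(a)=1$, hence the exponential.

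\textbf{Main obstacle.} I expect the hard step to be the first-order expansion of the slab probability. For unbounded variation $W^{(q)}(\varepsilon)/W^{(q)}(v)$ is not $O(\varepsilon)$ uniformly, and a naive window gives the wrong constant. I would try the excursion-theoretic route first. I would use the excursions of $\hat Y$ away from $0$ indexed by local time $L=-I$. Killing the excursion process at the first excursion whose height exceeds $v$, or which ends with a downward jump carrying the infimum below $a$, gives a Poisson rate. That rate is identified, via the drawup/regulator transform $1/Z_q(u,\theta)$ of \cite{Ivanovs2012} already used in the proof of Theorem~\ref{Prop_pre_post_inf_dist}, as $W_+^{(q)\prime}(v)/W^{(q)}(v)$. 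This makes the ``$-\log$ survival $=$ rate $\times$ local time $(x-a)$'' structure transparent. Overshoots of $I$ below $a$ do not alter the event, since they only occur at the killing time.

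\textbf{Step 2 (the three cases).}
\begin{itemize}[label=\textendash]
\item \emph{Case $v\le b-x$.} Every path value before $\tau_a^-$ satisfies $X_t\le I_t+v\le x+v\le b$, so $\{M^+\le v\}\subset\{S\le b\}$ and the first case reduces to Step~1.
\item \emph{Case $v\ge b-a$.} On $\{S\le b\}$ we have $X_t-I_t\le b-a\le v$, so the drawup constraint is automatic. The answer is the discounted two-sided exit \eqref{eq:two-sided-down} on $(a,b)$.
\item \emph{Intermediate case $b-x<v<b-a$.} The binding constraint is $S\le b$ as long as $I>b-v$, and $M^+\le v$ afterwards. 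I will apply the strong Markov property at $\tau_{b-v}^-$. Before that time the process must leave $(b-v,b)$ downward; this gives the factor $Z^{(q)}(x-b+v)-Z^{(q)}(v)W^{(q)}(x-b+v)/W^{(q)}(v)$ from \eqref{eq:two-sided-down}. Thereafter $S\le b$ is implied by the drawup constraint, since $I_t+v\le b$, and Step~1 started from level $b-v$ contributes $\exp(-(b-v-a)W_+^{(q)\prime}(v)/W^{(q)}(v))$.
\end{itemize}
The point needing care in the intermediate case is the undershoot $X_{\tau_{b-v}^-}<b-v$ caused by a downward jump. I will handle it by applying Step~1 from the actual position $X_{\tau_{b-v}^-}$ and controlling the resulting factor. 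Failing that, I will condition on creeping, which happens when $\sigma>0$, and verify separately that the jump contribution produces the stated product. This reconciliation, together with Step~1, is where I expect to spend the effort.
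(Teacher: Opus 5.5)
Your outline is essentially the paper's own proof. It uses an excursion identity for $X-I$ to get $\exp\{-(x-a)W_+^{(q)\prime}(v)/W^{(q)}(v)\}$, then the two inclusions and the strong Markov property at $\tau_{b-v}^-$. Step~1, however, cannot be completed: the unconstrained identity is false unless $\psi(\lambda)=\frac{\sigma^2}{2}\lambda^2$.

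To see this, take $q>0$ and let $v\to\infty$. The left side increases to $\mathbb{E}_x[e^{-q\tau_a^-};\tau_a^-<\infty]=Z^{(q)}(x-a)-\frac{q}{\Phi(q)}W^{(q)}(x-a)$. The right side tends to $e^{-\Phi(q)(x-a)}$, since $W^{(q)\prime}(v)/W^{(q)}(v)\to\Phi(q)$. Comparing Laplace transforms in $x-a$, the two agree only if $\Phi(q)^2\psi(\lambda)=q\lambda^2$.

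Concretely, take Brownian motion with drift $\mu\neq0$. Its paths are continuous, so your slab sandwich is exact. Both bounds equal $1-\varepsilon\,W^{(q)\prime}(0+)Z^{(q)}(v)/W^{(q)}(v)+o(\varepsilon)$. The true rate is therefore $W^{(q)\prime}(0+)Z^{(q)}(v)/W^{(q)}(v)$, which equals the claimed $W^{(q)\prime}(v)/W^{(q)}(v)$ plus $2\mu/\sigma^2$. With $\sigma=1$, $q=0$, $\mu>0$ this compares $2\mu/(1-e^{-2\mu v})$ with the claimed $2\mu/(e^{2\mu v}-1)$. Once $e^{2\mu v}>2$, the claimed value exceeds $\mathbb{P}_x(\tau_a^-<\infty)=e^{-2\mu(x-a)}$, which is impossible. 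So the ``wrong constant'' you expected from the naive window is actually the right one; the target formula is what is wrong.

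The rate $W^{(q)\prime}(v)/W^{(q)}(v)$ belongs to excursions \emph{below the running supremum}. These are indexed by $S$, which is a genuine local time because $X$ creeps upward. That rate gives $\mathbb{E}_x[e^{-q\tau_b^+};\tau_b^+<\sigma_v]=\exp\{-(b-x)W^{(q)\prime}(v)/W^{(q)}(v)\}$, and nothing transfers it to excursions above the infimum.

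With negative jumps the situation is worse. $I$ jumps whenever $X$ jumps to a new infimum, not only at the killing time as you assert, so $-I$ is not a local time of $X-I$ at zero. By the strong Markov property and spatial homogeneity, $\mathbb{E}_x[e^{-q\tau_a^-};M^+_{\tau_a^-}\le v]=Z^{(q)}(v)\,\mathbb{E}_0[e^{-q\hat\sigma_v};-I_{\hat\sigma_v}>x-a]$. Its transform in $x-a$ is governed by $1/Z_q(v,\theta)$. This cannot have the form $c/(\Lambda+\theta)$ for all $v$ unless $X$ is a Brownian motion with drift, because the $v\to\infty$ limit $Z^{(q)}(y)-\frac{q}{\Phi(q)}W^{(q)}(y)$ is exponential in $y$ only in that case. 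Hence there is no exponential rate to identify. Likewise, the step at $\tau_{b-v}^-$ gives $\mathbb{E}_x[e^{-q\tau_{b-v}^-}F(X_{\tau_{b-v}^-});\tau_{b-v}^-<\tau_b^+]$, which does not factor when there is undershoot.

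Consequently the first two branches and the ``in particular'' formula fail in general. Only the branch $v\ge b-a$ survives, and you correctly reduce it to the two-sided exit identity. The paper's proof has exactly the same gap: it asserts $\widehat n^{(q)}(\overline\varepsilon>v)=W_+^{(q)\prime}(v)/W^{(q)}(v)$ for the process reflected at its infimum and assumes continuous passage at $b-v$. So the statement itself, not just your argument, needs correcting.
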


\begin{proof} Let $Y_t:=X_t-I_t,\qquad t\geq0,$ be the process reflected at its running infimum. Then $ M_{\tau_a^-}^{+}=\sup_{0\leq t\leq\tau_a^-}Y_t.$
Thus, the event $\{M_{\tau_a^-}^{+}\leq v\}$ is the event that every excursion above the running infimum before $\tau_a^-$ has height at most $v$.  The standard $q$-killed excursion identity therefore gives directly
\begin{equation}
\mathbb E_x\left[e^{-q\tau_a^-};\,M_{\tau_a^-}^{+}\leq v\right]
=\exp\left\{
-\int_a^x
\widehat n^{(q)}(\overline\varepsilon>v)\,dy
\right\}=
\exp\!\left(-(x-a)\frac{W_+^{(q)\prime}(v)}{W^{(q)}(v)}\right).
\label{eq:mdu-before-downcrossing}
\end{equation} where $\widehat n^{(q)}$ is the $q$-killed excursion measure of the process reflected at its infimum.
We now impose the additional restriction $S_{\tau_a^-}\leq b$.

\noindent\emph{Case 1: For $0<v\leq b-x$,}
On $\{M_{\tau_a^-}^{+}\leq v\}$, for every
$t\leq\tau_a^-$,
\[
X_t=I_t+Y_t\leq I_t+v\leq x+v\leq b.
\]
Hence
\[
\{M_{\tau_a^-}^{+}\leq v\}
\subseteq
\{S_{\tau_a^-}\leq b\},
\] that is, the drawup restriction itself implies $S_{\tau_a^-}\leq b$, and \eqref{eq:mdu-before-downcrossing} gives the first branch.
\smallskip

\noindent\emph{Case 2: For $v\geq b-a$,}
On $\{S_{\tau_a^-}\leq b\}$, the path remains in $(a,b]$ before
$\tau_a^-$, and therefore
\[
X_t-I_t\leq b-a\leq v,
\qquad 0\leq t\leq\tau_a^-.
\]
Thus
\[
\{S_{\tau_a^-}\leq b\}
\subseteq
\{M_{\tau_a^-}^{+}\leq v\}.
\]
Since
\[
\{S_{\tau_a^-}\leq b\}
=
\{\tau_a^-<\tau_b^+\},
\] that is the condition $S_{\tau_a^-}\leq b$ implies $M_{\tau_a^-}^{+}\leq v$.  Hence the event reduces to $\{\tau_a^-<\tau_b^+\}$, and the two-sided exit identity gives
\[
Z^{(q)}(x-a)-Z^{(q)}(b-a)\frac{W^{(q)}(x-a)}{W^{(q)}(b-a)}.
\]
\smallskip

\noindent\emph{Case 3: Finally for $b-x<v<b-a$,}
 Applying the strong Markov property at $\tau_{b-v}^-$ by assuming that downward passage across $a<b-v<x$ is continuous and then using \eqref{eq:mdu-before-downcrossing} from the level $b-v$ yields the factor
$$\mathbb{E}_x\!\left[
e^{-q\tau_{b-v}^-};\,\tau_{b-v}^-<\tau_b^+
\right]\mathbb{E}_{b-v}\!\left[
e^{-q\tau_a^-};
\,M_{\tau_a^-}^{+}\leq v,\,
S_{\tau_a^-}\leq b
\right]$$

which equals 
\[
\exp\!\left(-(b-a-v)\frac{W_+^{(q)\prime}(v)}{W^{(q)}(v)}\right)Z^{(q)}(x-b+v)-Z^{(q)}(v)\frac{W^{(q)}(x-b+v)}{W^{(q)}(v)}.
\]
by the first two arguments given in Cases 1 and 2 and the two-sided exit identity on $(b-v,b)$
Multiplying these two factors proves the middle branch and completes the proof.
\end{proof}

\begin{theorem}\label{theorem:mdu-HSHI}
Fix $a<0<b$. Under $\mathbb{P}_{a,b}^{>}$, the following identities hold.
\begin{enumerate}[label=\textup{(\roman*)}]
\item For the pre-supremum process and $b<u<b-a$,
\begin{align}\label{eq:mdu-pre-supremum}
\mathbb{P}_{a,b}^{>}\left\{M_{0,H_S}^{+}\leq u\right\}
=\dfrac{W^{(\gamma)}(b-a)}{W^{(\gamma)}(-a)}
\dfrac{W^{(\gamma)}(u-b)}{W^{(\gamma)}(u)}.
\end{align}

\item For the intermediate process and $0<u\leq b-a$,
\begin{align}\label{eq:mdu-intermediate}
\mathbb{P}_{a,b}^{>}\left\{M_{H_S,H_I}^{+}\leq u\right\}
=
 e^{-(b-a-u)\frac{W_{+}^{(\gamma)\prime}(u)}{W^{(\gamma)}(u)}}
\frac{
\gamma W^{(\gamma)}(u)
-
Z^{(\gamma)}(u)\dfrac{W^{(\gamma)\prime}(u)}{W^{(\gamma)}(u)}
}{
\gamma W^{(\gamma)}(b-a)
-
Z^{(\gamma)}(b-a)\dfrac{W^{(\gamma)\prime}(b-a)}{W^{(\gamma)}(b-a)}
}.
\end{align}

\item For the post-infimum process and $0<u<b-a$,
\begin{align}\label{eq:mdu-post-infimum}
\mathbb{P}_{a,b}^{>}\left\{M_{H_I,T}^{+} \leq u\right\}
=\dfrac{W^{(\gamma)}(b-a)\left[Z^{(\gamma)}(u)-1\right]}
{W^{(\gamma)}(u)\left[Z^{(\gamma)}(b-a)-1\right]}.
\end{align}
\end{enumerate}
\end{theorem}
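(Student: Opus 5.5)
Each of the three identities will be computed inside the corresponding Doob transform of Theorem~\ref{Prop_H_S<H_I}. The general recipe is
\[
\mathbb{P}^h_x(A)=\frac{1}{h(x)}\,\mathbb{E}_x\bigl[h(X_\zeta);A\bigr],
\]
with $h$ evaluated at the terminal point of the component, or its derivative form when the terminal event is a point condition. The probability of the drawup event for the killed process is then obtained from the two-sided exit identities, Proposition~\ref{thm:q-drawup-supremum-tauminus}, or Pistorius' drawup identity. When a component starts on the boundary, an entrance limit $x\to b$ or $x\to a$ is taken with L'H\^opital's rule, as in the proofs of Theorems~\ref{lemma_pre_post_sup_dist} and \ref{Prop_pre_post_inf_dist}.

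For (i), the pre-supremum component starts at $0$, is killed on reaching $b$, and must avoid $(-\infty,a]$. On this component the drawup is at most $u$ iff the path stays above $b-u$ before $\tau_b^+$. For $u<b-a$ we have $b-u>a$, so the event is $\{\tau_b^+<\tau_{b-u}^-\}$. Its $h^*$-probability is
\[
\frac{h^*(b)}{h^*(0)}\,\mathbb{E}_0\bigl[e^{-\gamma\tau_b^+};\tau_b^+<\tau_{b-u}^-\bigr]
=\frac{W^{(\gamma)}(b-a)}{W^{(\gamma)}(-a)}\,\frac{W^{(\gamma)}(u-b)}{W^{(\gamma)}(u)}.
\]
The $b$-dependent constant in $h^*$ cancels and only the ratio $W^{(\gamma)}(x-a)$ remains, which gives \eqref{eq:mdu-pre-supremum}.

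For (iii), the post-infimum component starts at $a$, is killed at $T\wedge\tau_a^-\wedge\tau_b^+$, and has $h^{***}(x)=\mathbb{P}_x(T<\tau_a^-\wedge\tau_b^+)$. Because it starts at its own infimum, its drawup exceeds $u$ exactly when it reaches $a+u$ before $T\wedge\tau_a^-$. So for $x\downarrow a$,
\[
\mathbb{P}\{M^+\ge u\}=\lim_{x\to a}\frac{h^{***}(a+u)}{h^{***}(x)}\,\frac{W^{(\gamma)}(x-a)}{W^{(\gamma)}(u)}.
\]
Expanding $h^{***}(x)$ to first order in $W^{(\gamma)}(x-a)$ gives the limit $h^{***}(a+u)W^{(\gamma)}(b-a)/\bigl[(Z^{(\gamma)}(b-a)-1)W^{(\gamma)}(u)\bigr]$. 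Taking the complement and simplifying $h^{***}(a+u)$ yields \eqref{eq:mdu-post-infimum}; this mirrors the post-infimum computation in Theorem~\ref{Prop_pre_post_inf_dist}.

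For (ii), the intermediate component runs from $b$ (entrance at $x\to b$) until $\tau_a$ with $h^{**}$, and it must stay below $b$. Its drawup event is therefore $\{M^+_{\tau_a^-}\le u,\ S_{\tau_a^-}\le b\}$, killed at rate $\gamma$. The probability under $h^{**}$ is the $a$-derivative of Proposition~\ref{thm:q-drawup-supremum-tauminus}, the middle branch with $x$ near $b$, divided by the $a$-derivative of its last branch. This mirrors how $h^{**}$ was itself produced by differentiating in $a$.

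Differentiating the middle branch in $a$ brings down a factor $W^{(\gamma)\prime}_+(u)/W^{(\gamma)}(u)$ times the exponential and the bracket $Z^{(\gamma)}(x-b+u)-Z^{(\gamma)}(u)W^{(\gamma)}(x-b+u)/W^{(\gamma)}(u)$. The last branch yields $h^{**}(x)$ up to the factor $Z^{(\gamma)}(b-a)-1$. The limit $x\to b$ is a $0/0$ form in both numerator and denominator, so L'H\^opital's rule in $x$ will be applied. Using $Z^{(\gamma)\prime}=\gamma W^{(\gamma)}$, the $x$-derivative of the bracket at $x=b$ is $\gamma W^{(\gamma)}(u)-Z^{(\gamma)}(u)W^{(\gamma)\prime}(u)/W^{(\gamma)}(u)$. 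The analogous quantity at $b-a$ appears in the denominator, which produces \eqref{eq:mdu-intermediate}.

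This step is the main obstacle. The normalization of the entrance law must be tracked carefully; in particular, whether the derivative factors $W^{(\gamma)\prime}(u)/W^{(\gamma)}(u)$ and $(Z^{(\gamma)}(b-a)-1)$ cancel exactly as the stated formula requires. If they do not, I would instead apply the strong Markov property at $\tau_{b-u}^-$ directly under $h^{**}$. That route needs the assumption of continuous downward passage already used in Case~3 of Proposition~\ref{thm:q-drawup-supremum-tauminus}. Finally, I will check each formula against the Brownian specialization \eqref{eq:brownian-scale-functions}.
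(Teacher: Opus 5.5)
\textbf{Parts (i) and (iii).} These are fine and agree with the paper. In (i), your factor $W^{(\gamma)}(b-a)/W^{(\gamma)}(-a)$ times $W^{(\gamma)}(u-b)/W^{(\gamma)}(u)$ is exactly the paper's quotient of the two exit probabilities. In (iii), passing through the complement $\{\tau^+_{a+u}<T\wedge\tau_a^-\}$ is equivalent to the paper's survival ratio.

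\textbf{Part (ii).} Here there is a gap: dividing the $a$-derivative of the middle branch of Proposition~\ref{thm:q-drawup-supremum-tauminus} by the $a$-derivative of its last branch does not give \eqref{eq:mdu-intermediate}. The event $\{M^+_{\tau_a^-}\le u,\ S_{\tau_a^-}\le b\}$ moves with $a$ through $\tau_a^-$. So $\partial_a$ of the middle branch is not a density of the terminal point on a fixed event; it simply multiplies that branch by $W_+^{(\gamma)\prime}(u)/W^{(\gamma)}(u)$. Meanwhile, $\partial_a$ of the last branch is $Z^{(\gamma)}(b-a)\bigl[W^{(\gamma)\prime}(x-a)W^{(\gamma)}(b-a)-W^{(\gamma)}(x-a)W^{(\gamma)\prime}(b-a)\bigr]/W^{(\gamma)}(b-a)^2$. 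L'H\^opital at $x=b$ then gives, with $r=b-a$,
\[
\frac{W_+^{(\gamma)\prime}(u)}{W^{(\gamma)}(u)}\,
\frac{e^{-(r-u)W_+^{(\gamma)\prime}(u)/W^{(\gamma)}(u)}\bigl[\gamma W^{(\gamma)}(u)-Z^{(\gamma)}(u)W^{(\gamma)\prime}(u)/W^{(\gamma)}(u)\bigr]}{Z^{(\gamma)}(r)\bigl[W^{(\gamma)}(r)W^{(\gamma)\prime\prime}(r)-\bigl(W^{(\gamma)\prime}(r)\bigr)^2\bigr]/\bigl(W^{(\gamma)}(r)\bigr)^2}.
\]
The $u$-dependent prefactor does not cancel, and the denominator is not ``the analogous quantity at $b-a$''. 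For Brownian motion this is the stated value multiplied by $\tanh(\sqrt{2\gamma}(b-a))/\tanh(\sqrt{2\gamma}\,u)\neq1$, so your planned Brownian check would fail. Worse, at $u=b-a$ the probability must equal $1$, because the intermediate path stays in $[a,b]$. For $X_t=B_t+\mu t$, however, your expression reduces there to $W^{(\gamma)\prime}(r)/\bigl(W^{(\gamma)\prime}(r)+2\mu W^{(\gamma)}(r)\bigr)$. This differs from $1$ whenever $\mu\neq0$.

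\textbf{What the paper does instead.} The paper does not differentiate in $a$ at this stage. It writes both the drawup event and $\{\tau_a^-<\tau_b^+\wedge T\}$ under $h^{**}$, so the common factor $h^{**}(a)/h^{**}(x)$ cancels. What remains is the middle branch divided by the \emph{undifferentiated} last branch $Z^{(\gamma)}(x-a)-Z^{(\gamma)}(b-a)W^{(\gamma)}(x-a)/W^{(\gamma)}(b-a)$. Both vanish at $x=b$. By $Z^{(\gamma)\prime}=\gamma W^{(\gamma)}$, their $x$-derivatives there are exactly $\gamma W^{(\gamma)}(u)-Z^{(\gamma)}(u)W^{(\gamma)\prime}(u)/W^{(\gamma)}(u)$ and the same expression at $b-a$, which yields \eqref{eq:mdu-intermediate}.

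Your fallback (the strong Markov property at $\tau^-_{b-u}$) only re-derives the middle branch, which is already Case~3 of the Proposition. It leaves open the real issue, namely the normalization. To reach the stated formula you need the paper's normalization by the undifferentiated exit probability.
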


\begin{proof}
i. For the pre-supremum component, the Doob-transform factors in Theorem~\ref{Prop_H_S<H_I} cancel when the probability of reaching $b$ before $b-u$ is divided by the probability of reaching $b$ before $a$. That is, for $b<u<b-a$, we have  
\begin{align}
\mathbb{P}_{a,b}^{>}\left\{M_{0,H_S}^{+}<u\right\}
&=\mathbb{P}^{h^*}_0(\tau_b^+< \tau_{b-u}^-|\tau_b^+< \tau_{a}^-,\tau_b^+< T)\nonumber\\&=\frac{\frac{h^*(b)}{h^*(0)}}{\frac{h*(b)}{h^*(0)}}\frac{\mathbb{P}_0(\tau_b^+< \tau_{b-u}^-,\tau_b^+< T)}{\mathbb{P}_0(\tau_b^+< \tau_{a}^-,\tau_b^+< T)}\nonumber\\
&= \frac{W^{(\gamma)}(b-a)}{W^{(\gamma)}(-a)}\frac{W^{(\gamma)}(u-b)}{W^{(\gamma)}(u)}\nonumber
\end{align}

ii. For the intermediate component, the path is sure decrease from level $b$ to level $a$.  Applying Proposition~\ref{thm:q-drawup-supremum-tauminus} with $q=\gamma$ and dividing by the corresponding two-sided exit probability, the common Doob-transform factor cancels.  Letting the initial level tend to $b$ and applying L'H\^{o}pital's rule gives
\begin{align}
\mathbb{P}_{a,b}^{>}\left\{M_{H_S,H_I}^{+}<u\right\}
&=\lim _{x \rightarrow b} 
\frac{P_{x}^{h^{**}}\left\{\tau_{a}^{-}<\widehat{\alpha}_{u}, \tau_{a}^{-}<\tau_{b}^{+}, \tau_{a}^{-}<T\right\}}{P_{x}^{h^{**}}\left\{\tau_{a}^{-}<\tau_{b}^{+} \wedge T\right\}} \nonumber\\
	&=\lim _{x \rightarrow b} \frac{\frac{h^{**}(a)}{h^{**}(x)} e^{\left(-(b-a-u)\frac{W_+^{(q)\prime}(u)}{W^{(\gamma)}(u)}\right)}
\left[
Z^{(\gamma)}(x-b+u)-Z^{(\gamma)}(u)\frac{W^{(\gamma)}(x-b+u)}{W^{(\gamma)}(u)}
\right]}{\frac{h^{**}(a)}{h^{**}(x)}[Z^{(\gamma)}(x-a)-Z^{(\gamma)}(b-a)\frac{W^{(\gamma)}(x-a)}{W^{(\gamma)}(b-a)}]}\nonumber\\
&= \lim _{x \rightarrow b} \frac{ e^{\left(-(b-a-u)\frac{W_+^{(q)\prime}(u)}{W^{(\gamma)}(u)}\right)}
\left[
Z^{(\gamma)}(x-b+u)-Z^{(\gamma)}(u)\frac{W^{(\gamma)}(x-b+u)}{W^{(\gamma)}(u)}
\right]}{[Z^{(\gamma)}(x-a)-Z^{(\gamma)}(b-a)\frac{W^{(\gamma)}(x-a)}{W^{(\gamma)}(b-a)}]}\nonumber\\
&=e^{-(b-a-u)\frac{W_{+}^{(q)\prime}(u)}{W^{(\gamma)}(u)}}
\frac{
\gamma W^{(\gamma)}(u)
-
Z^{(\gamma)}(u)\dfrac{W^{(q)\prime}(u)}{W^{(\gamma)}(u)}
}{
\gamma W^{(\gamma)}(b-a)
-
Z^{(\gamma)}(b-a)\dfrac{W^{(q)\prime}(b-a)}{W^{(\gamma)}(b-a)}
}
\end{align}
iii. For the post-infimum process, start the killed process $x>a$ and use the excessive function in Theorem~\ref{Prop_H_S<H_I}. Then for $u<b-a$
\begin{align*}
\mathbb{P}_{a,b}^{>}\left(M_{H_I,T}^{+}<u\right) &= \lim\limits_{x \rightarrow a} \dfrac{\mathbb{P}_x(T < \tau_{u+a}^+,  T<\tau_b^+\wedge \tau_a^- )}{\mathbb{P}_x(T<\tau_b^+ \wedge \tau_a^- )}\\&=  \lim\limits_{x \rightarrow a} \dfrac{1-Z^{(\gamma)}(x-a)+[Z^{(\gamma)}(u+a-a)-1]\dfrac{W^{(\gamma)}(x-a)}{W^{(\gamma)}(u+a-a)}}{1-Z^{(\gamma)}(x-a)+[Z^{(\gamma)}(b-a)-1]\dfrac{W^{(\gamma)}(x-a)}{W^{(\gamma)}(b-a)}}\\ &=\lim\limits_{x \rightarrow a}\dfrac{Z^{(\gamma)\prime}(x-a)+[Z^{(\gamma)}(u)-1]\dfrac{W^{(\gamma)\prime}(x-a)}{W^{(\gamma)}(u)}}{Z^{(\gamma)\prime}(x-a)+[Z^{(\gamma)}(b-a)-1]\dfrac{W^{(\gamma)\prime}(x-a)}{W^{(\gamma)}(b-a)}}\\&=\dfrac{Z^{(\gamma)\prime}(0)+[Z^{(\gamma)}(u)-1]\dfrac{W^{(\gamma)\prime}(0)}{W^{(\gamma)}(b-a)}}{Z^{(\gamma)\prime}(0)+[Z^{(\gamma)}(b-a)-1]\dfrac{W^{(\gamma)\prime}(0)}{W^{(\gamma)}(b-a)}}=\dfrac{W^{(\gamma)}(b-a)[Z^{(\gamma)}(u)-1]}{W^{(\gamma)}(u)[Z^{(\gamma)}(b-a)-1]}
\end{align*}\end{proof}
\begin{remark}
For standard Brownian motion, substitution of \eqref{eq:brownian-scale-functions} into Theorem~\ref{theorem:mdu-HSHI} gives
\[
\mathbb{P}_{a,b}^{>}\left\{M_{0,H_S}^{+}\leq u\right\}
=
\frac{\sinh\!\left(\sqrt{2\gamma}(b-a)\right)}{-\sinh\!\left(\sqrt{2\gamma}a\right)}
\frac{\sinh\!\left(\sqrt{2\gamma}(u-b)\right)}{\sinh\!\left(\sqrt{2\gamma}u\right)},
\]
for the pre-supremum component,
\[
\frac{W^{(\gamma)}(u+b-a)}{W^{(\gamma)}(u)}
=
\frac{\sinh\!\left(\sqrt{2\gamma}(u+b-a)\right)}{\sinh\!\left(\sqrt{2\gamma}u\right)},
\]
for the intermediate component, and
\[
\mathbb{P}_{a,b}^{>}\left\{M_{H_I,T}^{+}<u\right\}
=
\frac{\cosh\!\left(\sqrt{2\gamma}u\right)-1}
{\cosh\!\left(\sqrt{2\gamma}(b-a)\right)-1}
\]
for the post-infimum component. Thus, the results found above in  Theorem~\ref{theorem:mdu-HSHI} verify the three laws of the symmetric counterparts of the maximum-decrease distributions of standard Brownian motion found in \cite[Thm.~4.1]{Salminen2007}.
\end{remark}
\begin{theorem}\label{theorem:mdd-HIHS}
Fix $a<0<b$. Under $\mathbb{P}_{a,b}^{<}$, the following identities hold.
\begin{enumerate}[label=\textup{(\roman*)}]
\item For the pre-infimum process,
\begin{equation*}\small\label{eq:f1-piecewise}
\mathbb{P}_{a,b}^{<}\{M_{0,H_I}^-<d\}=\begin{cases}
0, & d\le -a,\\
\displaystyle
1-\left[\frac{W^{(\gamma)\prime}(d)W^{(\gamma)}(b-a)-W^{(\gamma)\prime}(b-a)W^{(\gamma)}(d)}{W^{(\gamma)\prime}(-a)W^{(\gamma)}(b-a)-W^{(\gamma)\prime}(b-a)W^{(\gamma)}(-a)}\right]
\left(\frac{W^{(\gamma)}(-a)}{W^{(\gamma)}(d)}\right),
& -a<d<b-a,\\
1, & d\ge b-a.
\end{cases}
\end{equation*}

\item For the intermediate process,
\begin{equation*}\label{eq:f2-piecewise}
\mathbb{P}_{a,b}^{<}\{M_{H_I,H_S}^-<d\}=\begin{cases}
0, & d\le0,\\[4pt]
\displaystyle
\frac{W^{(\gamma)}(b-a)}{W^{(\gamma)}(d)}
\exp\left\{-(b-a-d)\frac{W_+^{(\gamma)\prime}(d)}{W^{(\gamma)}(d)}\right\},
& 0<d<b-a,\\[12pt]
1, & d\ge b-a.
\end{cases}
\end{equation*}

\item For the post-supremum process,
\begin{equation*}\label{eq:f3-piecewise}
\mathbb{P}_{a,b}^{<}\{M_{H_S,T}^-<d\}=\begin{cases}
0, & d\le0,\\[4pt]
\displaystyle
\frac{\bigl(Z^{(\gamma)}(d)-1\bigr)\dfrac{W^{(\gamma)\prime}(d)}{W^{(\gamma)}(d)}-\gamma W^{(\gamma)}(d)}
{\bigl(Z^{(\gamma)}(b-a)-1\bigr)\dfrac{W^{(\gamma)\prime}(b-a)}{W^{(\gamma)}(b-a)}-\gamma W^{(\gamma)}(b-a)},
& 0<d<b-a,\\[14pt]
1, & d\ge b-a.
\end{cases}
\end{equation*}
\end{enumerate}
\end{theorem}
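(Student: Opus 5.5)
The plan is to use the three Doob transforms $h_1,h_2,h_3$ of Theorem~\ref{prop:HI-before-HS}. In each case the probability in question will be computed as an exit-type probability for the unconditioned killed process, multiplied by a ratio of $h$-values. Because the relevant event already ends at the terminal state of each component, the normalising $h$-factor cancels, exactly as in the proof of Theorem~\ref{theorem:mdu-HSHI}. The boundary cases come from the deterministic constraints. Each component moves within $[a,b]$, so no drawdown exceeds $b-a$. The pre-infimum path starts at $0$ and ends at $a$, so its drawdown is at least $-a$. Hence $0$ for $d\le -a$ in (i), $0$ for $d\le 0$ in (ii),(iii), and $1$ for $d\ge b-a$.

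For (i), on $-a<d<b-a$ the event $\{M^-_{0,H_I}\ge d\}$ under the $h_1$-transform means the path drops by $d$ from its running maximum before reaching $a$. Since the path reaches $a$ (an exit through the lower boundary of $(a,b)$), and since $d> -a$, this is equivalent to the running supremum exceeding $a+d$ before $\tau_a$. So I would compute
\[
\mathbb{P}^{h_1}_0\{\tau^+_{a+d}<\tau_a\}=\frac{h_1(a+d)}{h_1(0)}\,\mathbb{P}_0\{\tau^+_{a+d}<\tau^-_a\}=\frac{h_1(a+d)}{h_1(0)}\frac{W^{(\gamma)}(-a)}{W^{(\gamma)}(d)},
\]
with no $\gamma$-discount here, since $h_1$ already absorbs the killing through $T$. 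Substituting $h_1$ from \eqref{eq:h1-HIHS} should give the stated bracket. The common factor $(W^{(\gamma)}(b-a))^{-2}$ cancels. This mirrors the pre-infimum drawdown law of Theorem~\ref{Prop_pre_post_inf_dist}, with $h_1$ replacing the unrestricted excessive function.

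For (ii), the intermediate path is $a+\Gamma$, with $\Gamma$ running from $0$ to $b-a$ under $h_2$. The maximum drawdown is the supremum of the process reflected at its running maximum before $\tau^+_{b-a}$. The plan is the mirror image of Proposition~\ref{thm:q-drawup-supremum-tauminus}. The excursion-theory identity for drawdowns (Taylor/Lehoczky type) gives
\[
\mathbb{E}_x\bigl[e^{-\gamma\tau^+_c};M^-_{\tau^+_c}<d\bigr]=\exp\{-(c-x)W^{(\gamma)\prime}_+(d)/W^{(\gamma)}(d)\}.
\]
The condition $\tau^+_{b-a}<\tau^-_0$ is automatic once $d$ is below the distance to the lower boundary. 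The only starting point is $x\to0^+$, which forces a separate treatment of the first segment until the process climbs to level $d$. That segment is handled via the two-sided factor $W^{(\gamma)}(x)/W^{(\gamma)}(d)$, using the strong Markov property at $\tau^+_d$ (upward creeping). Then I would multiply by the $h_2$ ratio $h_2(b-a)/h_2(x)$ and let $x\downarrow0$. The $W^{(\gamma)}(x)$ factors cancel, and the exponential tilt in $h_2$ cancels against the $\gamma$-discount. This leaves $\frac{W^{(\gamma)}(b-a)}{W^{(\gamma)}(d)}e^{-(b-a-d)W'_+/W}$.

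For (iii), the post-supremum component starts at $b$ under $h_3$ and is killed at $T$ inside $(a,b)$. Its drawdown is $b$ minus its minimum, because the supremum is already $b$. So $\{M^-_{H_S,T}<d\}=\{T<\tau^-_{b-d}\}$ within the $h_3$ conditioning, and I would take
\[
\lim_{x\to b}\frac{\mathbb{P}_x\{T<\tau^+_b\wedge\tau^-_{b-d}\}}{\mathbb{P}_x\{T<\tau^+_b\wedge\tau^-_a\}}.
\]
Both numerator and denominator are the $h^{***}$ formula, evaluated with interval lengths $d$ and $b-a$. Both vanish at $x=b$, so L'Hôpital applies. After differentiation, the numerator becomes $-\gamma W^{(\gamma)}(d)+(Z^{(\gamma)}(d)-1)W'(d)/W(d)$, and the denominator is the analogous expression with $d$ replaced by $b-a$. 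This is the stated ratio.

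The main obstacle is (ii): justifying the entrance limit $x\downarrow0$ under $h_2$, and the splitting at $\tau^+_d$, where the exponential formula only applies after the running maximum has reached $d$. The $W'_+$ notation covers a possible lack of smoothness there.
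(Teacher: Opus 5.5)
Your route matches the paper's for (i): it is exactly the paper's computation $1-\frac{h_1(a+d)}{h_1(0)}\frac{W^{(\gamma)}(-a)}{W^{(\gamma)}(d)}$. For (ii) and (iii) the paper only cites \cite{VardarAcar2021}, with a one-line pointer to the reflected-process exit identity for (ii). Your splitting at $\tau_d^+$ with the drawdown exponential, and your L'H\^opital limit of $\mathbb{P}_x\{T<\tau_b^+\wedge\tau_{b-d}^-\}/\mathbb{P}_x\{T<\tau_b^+\wedge\tau_a^-\}$ as $x\uparrow b$, are the natural way to fill these in, and (iii) is fine as written. The discount bookkeeping in (i) and (ii), however, is misstated. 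In (ii), the cancellation you assert does not happen with the $h$-function you are told to use.

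In (i) there \emph{is} a $\gamma$-discount. The reference process of the $h_1$-transform is killed at $T$, so $\mathbb{P}^{h_1}_0\{\tau^+_{a+d}<\zeta\}=\frac{h_1(a+d)}{h_1(0)}\mathbb{E}_0[e^{-\gamma\tau^+_{a+d}};\tau^+_{a+d}<\tau^-_a]$, and $W^{(\gamma)}(-a)/W^{(\gamma)}(d)$ is exactly this discounted quantity. Your formula is right; only the remark justifying it is wrong. In (ii), take $h_2$ as printed in \eqref{eq:h2-HIHS} together with the $\gamma$-discounted identities you quote. Then $h_2(b-a)/h_2(x)=e^{\Phi(\gamma)(b-a-x)}W^{(\gamma)}(b-a)/W^{(\gamma)}(x)$, and nothing remains to absorb $e^{\Phi(\gamma)(b-a-x)}$. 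You would obtain the stated formula times $e^{\Phi(\gamma)(b-a)}$, which exceeds $1$ as $d\uparrow b-a$. Already the mass of paths reaching $b-a$, namely $\frac{h_2(b-a)}{h_2(x)}\mathbb{E}_x[e^{-\gamma\tau^+_{b-a}};\tau^+_{b-a}<\tau^-_0]=e^{\Phi(\gamma)(b-a-x)}$, is larger than $1$. Dropping the discount instead brings in $W^{(0)}$ and does not help.

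What is missing is the identification of the reference measure. By Theorem~\ref{lemma_pre_post_sup_dist}(b), the intermediate piece is the post-infimum part of the Esscher-tilted pre-supremum process, i.e.\ $\mathbb{P}^{\Phi(\gamma)}$ conditioned to reach $b$ before $\tau_a^-$. In shifted coordinates, its $h$-function relative to $\mathbb{P}^{\Phi(\gamma)}$ killed at $\tau_0^-\wedge\tau_{b-a}^+$ is $h(x)=e^{-\Phi(\gamma)x}W^{(\gamma)}(x)$. After normalization this is \eqref{eq:h2-HIHS} with the sign of the exponent reversed. Since $X_{\tau^+_{b-a}}=b-a$, the Esscher density gives $\frac{h(b-a)}{h(x)}\mathbb{P}^{\Phi(\gamma)}_x(A)=\frac{W^{(\gamma)}(b-a)}{W^{(\gamma)}(x)}\mathbb{E}_x[e^{-\gamma\tau^+_{b-a}};A]$ for events $A$ determined by the path up to $\tau^+_{b-a}$. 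This is the cancellation you had in mind, and it amounts to $h\propto W^{(\gamma)}$ for the $T$-killed process. With this $h$, your product $\frac{W^{(\gamma)}(b-a)}{W^{(\gamma)}(x)}\cdot\frac{W^{(\gamma)}(x)}{W^{(\gamma)}(d)}\exp\{-(b-a-d)W^{(\gamma)\prime}_+(d)/W^{(\gamma)}(d)\}$ does not depend on $x\in(0,d)$. So the entrance limit you single out as the main obstacle is in fact trivial.
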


\begin{proof}
For the pre-infimum process, the event $\{M_{0,H_I}^-<d\}$ is impossible when $d\leq-a$, because the path starts from $0$ and ends at $a$.  If $-a<d<b-a$, Theorem~\ref{prop:HI-before-HS} and the two-sided exit identity give
\begin{align*}
\mathbb{P}_{a,b}^{<}\{M_{0,H_I}^-<d\}
&=1-
\frac{h_1(a+d)}{h_1(0)}
\frac{W^{(\gamma)}(-a)}{W^{(\gamma)}(d)}\\
&=1-\left[\frac{W^{(\gamma)\prime}(d)W^{(\gamma)}(b-a)-W^{(\gamma)\prime}(b-a)W^{(\gamma)}(d)}{W^{(\gamma)\prime}(-a)W^{(\gamma)}(b-a)-W^{(\gamma)\prime}(b-a)W^{(\gamma)}(-a)}\right]
\left(\frac{W^{(\gamma)}(-a)}{W^{(\gamma)}(d)}\right).
\end{align*}
The pre-$H_I$ component's maximum drawdown distribution is proved in \cite{VardarAcar2026}. For the intermediate component the reflected-process exit identity is applied to the transform in \eqref{eq:h2-HIHS} to find the maximum drawdown distribution. We refer the reader to the study \cite{VardarAcar2021} for the intermediate and the post-$H_S$ processes maximum drawdown distributions. 
\end{proof}
\section{Joint and marginal distributions of the maximum drawdown and the maximum drawup}\label{sec:joint-distributions-of-maximum-drawdowns-and-maximum-drawups}

The preceding sections provide the conditional laws of all path decompositions required for finding the unconditional joint law of the maximum drawdown and the maximum drawup of spectrally negative Levy process taken up to independent exponentially distributed time $T.$ We now continue our solution first by deriving the distribution of the conditions imposed on the process and densities of the extrema, followed by finding the joint laws marginal laws together with the ordering of these extrema, and finally by obtaining the joint law of maximum drawdown and maximum drawup free of any condition. 

Based on the conditional characterizations, the probability
\[
\mathbb{P}\{H_I < H_S,\, I_T=a,\, S_T=b\},
\]
which corresponds to the joint distribution of the infimum and supremum together with the event that the infimum occurs before the supremum, is derived in the following Corollary \ref{corollary_hinf<hsup,inf=a,sup=b}.
\begin{corollary}\label{corollary_hinf<hsup,inf=a,sup=b}
For \(a<0<b\),
\begin{align*}
p_{IS}^{<}(a,b)=
\frac{W^{(\gamma)\prime}(-a)\,W^{(\gamma)}(b-a)-
W^{(\gamma)}(-a)\,W^{(\gamma)\prime}(b-a)
}{\bigl(W^{(\gamma)}(b-a)\bigr)^2
}[Z^{(\gamma)\prime}(b-a)-\left(Z^{(\gamma)}(b-a)-1\right) \frac{W^{(\gamma)\prime}(b-a)}{W^{(\gamma)}(b-a)}]
\end{align*}
\end{corollary}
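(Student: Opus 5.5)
Conditioning first on the value of the infimum and then on the ordering and the supremum level, we will write
\[
p_{IS}^{<}(a,b)\,\mathrm{d}a\,\mathrm{d}b
=\mathbb{P}_0\{I_T\in\mathrm{d}a\}\;\mathbb{P}_0\{S_{H_I}<b,\ \text{post-}H_I\text{ path has supremum in }\mathrm{d}b\mid I_T=a\}.
\]
On $\{H_I<H_S\}$ the supremum $S_T=b$ is attained after $H_I$. So the event is: the pre-infimum path stays below $b$, and the post-infimum path attains its supremum in $\mathrm{d}b$. By the conditional independence of the pre- and post-$H_I$ pieces in Theorem~\ref{Prop_pre_post_inf_dist}, this factorizes as
\[
f_{I_T}(a)\,\mathbb{P}\{S_{H_I}<b\mid I_T=a\}\,\frac{\partial}{\partial b}\mathbb{P}\{S_{H_I,T}<b\mid I_T=a\}.
\]

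We will then insert the three explicit ingredients. The first is the density $f_{I_T}(a)=\frac{\gamma}{\Phi(\gamma)}W^{(\gamma)\prime}(-a)-\gamma W^{(\gamma)}(-a)=h_1^{I}(0)$, the pre-infimum excessive function evaluated at $0$, as identified in the proof of Theorem~\ref{Prop_pre_post_inf_dist}. The second is the pre-infimum supremum law from part (i), and the third is the post-infimum supremum law from part (ii). The key simplification is that
\[
f_{I_T}(a)\,\mathbb{P}\{S_{H_I}<b\mid I_T=a\}=h^I(0)-h^I(b)\frac{W^{(\gamma)}(-a)}{W^{(\gamma)}(b-a)}.
\]
Rewriting $h^I$ in terms of $W^{(\gamma)},W^{(\gamma)\prime}$, the $-\gamma W$ terms cancel. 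We are left with
\[
\frac{\gamma}{\Phi(\gamma)}\,\frac{W^{(\gamma)\prime}(-a)W^{(\gamma)}(b-a)-W^{(\gamma)}(-a)W^{(\gamma)\prime}(b-a)}{W^{(\gamma)}(b-a)},
\]
which is essentially $h_1$ of Theorem~\ref{prop:HI-before-HS} at $x=0$, up to the factor $W^{(\gamma)}(b-a)\gamma/\Phi(\gamma)$.

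Next we differentiate $\Phi(\gamma)(Z^{(\gamma)}(b-a)-1)/(\gamma W^{(\gamma)}(b-a))$ in $b$, using $Z^{(\gamma)\prime}=\gamma W^{(\gamma)}$. This gives
\[
\frac{\Phi(\gamma)}{\gamma W^{(\gamma)}(b-a)}\Big[Z^{(\gamma)\prime}(b-a)-(Z^{(\gamma)}(b-a)-1)\frac{W^{(\gamma)\prime}(b-a)}{W^{(\gamma)}(b-a)}\Big].
\]
Multiplying the two displays, the factors $\gamma/\Phi(\gamma)$ cancel and the remaining $W^{(\gamma)}(b-a)$ factors combine into $(W^{(\gamma)}(b-a))^2$. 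This yields exactly the claimed formula.

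The main obstacle is justifying the factorization rigorously: the event $\{H_I<H_S\}$ must coincide a.s. with $\{S_{0,H_I}<S_{H_I,T}\}$, and ties must have probability zero. Since $X$ has unbounded variation and creeps upward, the supremum is attained at a unique time, so ties are null. As a consistency check we will also compare with the alternative route through Theorem~\ref{lemma_pre_post_sup_dist}. There, the bracket $Z^{(\gamma)\prime}-(Z^{(\gamma)}-1)W^{(\gamma)\prime}/W^{(\gamma)}$ appears as $\Phi(\gamma)$ times the post-supremum survival probability, and we expect the same expression to emerge.
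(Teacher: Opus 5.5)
Your proposal is correct, but it uses the decomposition at the infimum, whereas the paper's proof uses the decomposition at the supremum (Theorem~\ref{lemma_pre_post_sup_dist}). The paper writes
\[
p_{IS}^{<}(a,b)=\Phi(\gamma)e^{-\Phi(\gamma)b}\,\frac{\partial}{\partial a}\mathbb{P}_0\bigl(I_{H_S}\le a\mid S_T=b\bigr)\,\mathbb{P}\bigl(I_{H_S,T}>a\mid S_T=b\bigr).
\]
Here, on $\{H_I<H_S\}$, the infimum lies in the pre-supremum piece and the post-supremum piece stays above $a$. The Esscher factor $e^{\Phi(\gamma)b}$ of the pre-supremum law cancels $e^{-\Phi(\gamma)b}$ in the density of $S_T$. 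The $1/\Phi(\gamma)$ in the post-supremum survival probability cancels the remaining $\Phi(\gamma)$. The factor $W^{(\gamma)\prime}(-a)W^{(\gamma)}(b-a)-W^{(\gamma)}(-a)W^{(\gamma)\prime}(b-a)$ arises as the $a$-derivative of $W^{(\gamma)}(-a)/W^{(\gamma)}(b-a)$.

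Your computation uses the same decomposition the paper uses for the companion density $p_{IS}^{>}$, with the $b$-derivative moved from the pre-infimum factor to the post-infimum factor. In your route the same factor appears through the cancellation of the $-\gamma W^{(\gamma)}$ terms. Your algebra, including the cancellation of $\gamma/\Phi(\gamma)$, is right.

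The paper's route has the advantage that it relies only on the classical Esscher description of the pre-supremum process and the post-supremum law. It does not need the newer pre-infimum characterization imported from \cite{VardarAcar2026}, on which your factor $\mathbb{P}\{S_{H_I}<b\mid I_T=a\}$ depends. Your route has the advantage that both ordered densities come out of a single decomposition. The ``consistency check'' you propose is exactly the paper's proof, so carrying it out would genuinely cross-validate the pre-infimum law against the supremum decomposition.

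One small simplification: with $H_S$ defined as the first attainment time, $\{H_I<H_S\}=\{S_{H_I}<S_{H_I,T}\}$ holds identically. Ties are null simply because, given $I_T=a$, the law of $S_{H_I,T}$ is continuous and independent of the pre-infimum piece. So no uniqueness-of-argmax argument is needed.
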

\begin{proof}
By the independence and conditional laws in
Theorem~\ref{lemma_pre_post_sup_dist},
\begin{align*}
p_{IS}^{<}(a,b)
&=\frac{\mathrm{d}}{\mathrm{d}a}\mathbb{P}_0(I_{H_S}\leq a\mid S_T=b)
\mathbb{P}_b(I_{H_S,T}>a\mid S_T=b)\frac{\mathrm d}{\mathrm d b}\mathbb{P}_0(S_T\leq b) \\
&=\frac{\mathrm{d}}{\mathrm{d}a}\left(1-\frac{e^{\Phi(\gamma)b}W^{(\gamma)}(-a)}{W^{(\gamma)}(b-a)}\right)\left[Z^{(\gamma)\prime}(b-a)
-\frac{Z^{(\gamma)}(b-a)-1}{\Phi(\gamma)}
\frac{W^{(\gamma)\prime}(b-a)}{W^{(\gamma)}(b-a)}\right]
\Phi(\gamma)e^{-\Phi(\gamma)b}\\
&=
\frac{W^{(\gamma)\prime}(-a)\,W^{(\gamma)}(b-a)-
W^{(\gamma)}(-a)\,W^{(\gamma)\prime}(b-a)
}{\bigl(W^{(\gamma)}(b-a)\bigr)^2
}[Z^{(\gamma)\prime}(b-a)-\left(Z^{(\gamma)}(b-a)-1\right) \frac{W^{(\gamma)\prime}(b-a)}{W^{(\gamma)}(b-a)}]
\end{align*}

Note that, using the Brownian scale functions and derivatives in
\eqref{eq:brownian-scale-functions}, this result becomes
\[2\gamma\,
\frac{
\sinh\left(\sqrt{2\gamma}\,b\right)
\left(\cosh\left(\sqrt{2\gamma}(b-a)\right)-1\right)
}{
\sinh^3\left(\sqrt{2\gamma}(b-a)\right)
}
\]
which matches exactly with the result reported in \cite{Salminen2007}.
\end{proof}
Similarly, the probability
\[
\mathbb{P}\{H_S < H_I,\, I_T=a,\, S_T=b\},
\]
that corresponds to the joint distribution of the infimum and supremum with the supremum occurs before the infimum is obtained as follows in Corollary \ref{corollary_hsup<hinf,inf=a,sup=b}.
\begin{corollary}\label{corollary_hsup<hinf,inf=a,sup=b}
For \(a<0<b\),
\begin{align}\label{f(a,b)2}
&p_{IS}^{>}(a,b) \nonumber
= \frac{
W^{(\gamma)}(-a)\left(Z^{(\gamma)}(b-a)-1\right)
\left[
\left(W^{(\gamma)\prime}(b-a)\right)^2
-
W^{(\gamma)}(b-a)W^{(\gamma)\prime\prime}(b-a)
\right]
}{
\left(W^{(\gamma)}(b-a)\right)^3
}
\end{align}
\end{corollary}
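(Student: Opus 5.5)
The plan is to mirror the proof of Corollary~\ref{corollary_hinf<hsup,inf=a,sup=b}, but to decompose at the infimum rather than at the supremum. On $\{H_S<H_I\}$ with $I_T=a$, the supremum $S_T=b$ is attained on the pre-infimum piece, and the post-infimum piece must stay below $b$. Conditionally on $I_T=a$, these two pieces are independent by Theorem~\ref{Prop_pre_post_inf_dist}. Hence I would write
\[
p_{IS}^{>}(a,b)=f_{I_T}(a)\,\frac{\mathrm d}{\mathrm d b}\mathbb{P}_0\{S_{H_I}<b\mid I_T=a\}\;\mathbb{P}\{S_{H_I,T}<b\mid I_T=a\}.
\]
The three factors are the density of the infimum, the conditional density of the pre-infimum supremum, and the probability that the post-infimum path stays below $b$.

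\textbf{Step 1 (density of $I_T$).} I take $f_{I_T}(a)=h_1(0)=\frac{\gamma}{\Phi(\gamma)}W^{(\gamma)\prime}(-a)-\gamma W^{(\gamma)}(-a)$. This is the derivative in $a$ of $\mathbb{P}_0\{I_T\le a\}$, computed in the proof of Theorem~\ref{Prop_pre_post_inf_dist}, and it is also the one used at the end of that proof.

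\textbf{Step 2 (conditional density of $S_{H_I}$).} I differentiate the pre-infimum supremum law of Theorem~\ref{Prop_pre_post_inf_dist}(i) with respect to $b$. The factors $W^{(\gamma)}(-a)$ and $h_1(0)$ do not depend on $b$, so only
\[
\frac{\frac{\gamma}{\Phi(\gamma)}W^{(\gamma)\prime}(b-a)-\gamma W^{(\gamma)}(b-a)}{W^{(\gamma)}(b-a)}=\frac{\gamma}{\Phi(\gamma)}\frac{W^{(\gamma)\prime}(b-a)}{W^{(\gamma)}(b-a)}-\gamma
\]
needs to be differentiated. The constant $-\gamma$ drops out, which leaves
\[
\frac{\gamma}{\Phi(\gamma)}\,W^{(\gamma)}(-a)\,\frac{\bigl(W^{(\gamma)\prime}(b-a)\bigr)^2-W^{(\gamma)}(b-a)W^{(\gamma)\prime\prime}(b-a)}{h_1(0)\bigl(W^{(\gamma)}(b-a)\bigr)^2}.
\]
Here the standing $C^2$ assumption on $W^{(\gamma)}$ is used. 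When this is multiplied by $f_{I_T}(a)=h_1(0)$, the factor $h_1(0)$ cancels.

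\textbf{Step 3 (post-infimum factor).} I insert $\mathbb{P}\{S_{H_I,T}<b\mid I_T=a\}=\Phi(\gamma)\bigl(Z^{(\gamma)}(b-a)-1\bigr)/\bigl(\gamma W^{(\gamma)}(b-a)\bigr)$ from Theorem~\ref{Prop_pre_post_inf_dist}(ii). Multiplying the three factors, the ratio $\gamma/\Phi(\gamma)$ cancels against $\Phi(\gamma)/\gamma$, and one extra power of $W^{(\gamma)}(b-a)$ moves into the denominator. This gives exactly \eqref{f(a,b)2}.

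The main obstacle is justifying the factorization itself. This needs three things. First, $\{H_S<H_I,\,S_T\in\mathrm db\}$ given $I_T=a$ must coincide with $\{S_{H_I}\in \mathrm db,\ S_{H_I,T}<b\}$ up to null sets. For this I would use uniqueness of the extrema (Millar) and the fact that ties $S_{H_I,T}=b$ carry zero density. Second, the sign convention in Step 2 must be handled correctly: $\mathbb{P}\{S_{H_I}<b\}$ is increasing in $b$, and the bracket $(W')^2-WW''$ has to come out with the sign in the statement. Third, I would add a consistency check: substituting the Brownian scale functions \eqref{eq:brownian-scale-functions} should reproduce the reflected form of Corollary~\ref{corollary_hinf<hsup,inf=a,sup=b}, as in \cite{Salminen2007}.
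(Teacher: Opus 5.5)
Your proposal is correct and matches the paper's proof essentially line for line. The paper also writes $p_{IS}^{>}(a,b)$ as the product of the density $\frac{\gamma}{\Phi(\gamma)}W^{(\gamma)\prime}(-a)-\gamma W^{(\gamma)}(-a)$ of $I_T$, the $b$-derivative of the pre-infimum supremum law, and the post-infimum factor $\mathbb{P}\{S_{H_I,T}<b\mid I_T=a\}$ from Theorem~\ref{Prop_pre_post_inf_dist}, and it uses the same cancellations of $h_1(0)$ and of $\gamma/\Phi(\gamma)$, with your sign handled correctly since differentiating $-W^{(\gamma)\prime}(b-a)/W^{(\gamma)}(b-a)$ in $b$ gives $\bigl[(W^{(\gamma)\prime})^2-W^{(\gamma)}W^{(\gamma)\prime\prime}\bigr](b-a)/\bigl(W^{(\gamma)}(b-a)\bigr)^2$.
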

\begin{proof}
By the independence of the pre-infimum and post-infimum processes in
Theorem~\ref{Prop_pre_post_inf_dist},
\begin{align*}
p_{IS}^{>}(a,b)
&=\frac{\mathrm{d}}{\mathrm{d}b}\mathbb{P}_{0}\left(S_{H_{I}}\leq b \mid I_{T}=a\right)
\mathbb{P}_{a}\left(S_{H_{I},T}<b\mid I_{T}=a\right)\frac{\mathrm{d}}{\mathrm{d}a}\mathbb{P}_0(I_T<a)\\&
=\frac{\mathrm{d}}{\mathrm{d}b}[1-\frac{ \frac{\gamma}{\Phi(\gamma)} W^{(\gamma)\prime}(b-a) -\gamma W^{(\gamma)}(b-a) }{\frac{\gamma}{\Phi(\gamma)} W^{(\gamma)\prime}(-a) -\gamma W^{(\gamma)}(-a)}\left[\frac{W^{(\gamma)}(-a)}{W^{(\gamma)}(b-a)}\right]]
(\frac{\Phi(\gamma)({Z^{(\gamma)}}(b-a)-1)}{\gamma W^{(\gamma)}(b-a)} )
\\&
\frac{\gamma}{\Phi(\gamma)} W^{(\gamma)\prime}(-a) -\gamma W^{(\gamma)}(-a)\\&
=\frac{
\frac{\gamma}{\Phi(\gamma)}W^{(\gamma)}(-a)
}{
\frac{\gamma}{\Phi(\gamma)}W^{(\gamma)\prime}(-a)
-\gamma W^{(\gamma)}(-a)
}
\,
\frac{
\left(W^{(\gamma)\prime}(b-a)\right)^2
-
W^{(\gamma)}(b-a)W^{(\gamma)\prime\prime}(b-a)
}{
\left(W^{(\gamma)}(b-a)\right)^2
}\\&
(\frac{\Phi(\gamma)({Z^{(\gamma)}}(b-a)-1)}{\gamma W^{(\gamma)}(b-a)} )
\frac{\gamma}{\Phi(\gamma)} W^{(\gamma)\prime}(-a) -\gamma W^{(\gamma)}(-a)\\&
=\frac{
W^{(\gamma)}(-a)\left(Z^{(\gamma)}(b-a)-1\right)
\left[
\left(W^{(\gamma)\prime}(b-a)\right)^2
-
W^{(\gamma)}(b-a)W^{(\gamma)\prime\prime}(b-a)
\right]
}{
\left(W^{(\gamma)}(b-a)\right)^3
}
\end{align*}
Note also that, using \eqref{eq:brownian-scale-functions}, this becomes
\[2\gamma\,
\frac{
\left(\cosh\left(\sqrt{2\gamma}(b-a)\right)-1\right)
\sinh\left(-\sqrt{2\gamma}a\right)
}{
\sinh^3\left(\sqrt{2\gamma}(b-a)\right)
}
\] which is in agreement with the result reported in \cite{Salminen2007}.
\end{proof}
\begin{theorem}\label{thm:HIHS-contribution}
For $d>0$ and every bounded measurable function $\varphi:\mathbb{R}_+\to\mathbb{R}_+$,
\begin{align}
&\mathbb{E}\!\left(\varphi(M_T^{+});\,M_T^{-}<d,\,H_I<H_S\right)\nonumber\\
&\quad=
\int_0^d
\varphi(r)
\frac{
\left[
\gamma W^{(\gamma)}(r)
-
\bigl(Z^{(\gamma)}(r)-1\bigr)
\dfrac{W^{(\gamma)\prime}(r)}{W^{(\gamma)}(r)}
\right]^2
}{
\gamma W^{(\gamma)}(r)
}
\,dr
\nonumber\\
&\qquad+
\frac{
\left[
\gamma W^{(\gamma)}(d)
-
\bigl(Z^{(\gamma)}(d)-1\bigr)
\dfrac{W^{(\gamma)\prime}(d)}{W^{(\gamma)}(d)}
\right]^2
}{
\gamma W^{(\gamma)}(d)
}
\int_d^{\infty}
\varphi(r)
\exp\left\{
-(r-d)
\frac{W^{(\gamma)\prime}(d)}{W^{(\gamma)}(d)}
\right\}\,dr.
\label{eq:HIHS-contribution}
\end{align}
\end{theorem}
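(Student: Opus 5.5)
The plan is to start from the reduction \eqref{eq:HIHS-reduction}. On $\{H_I<H_S\}$ with $I_T=a$, $S_T=b$, the maximum drawup equals the range $b-a$. By Theorem~\ref{prop:HI-before-HS}, conditional on $(a,b)$ the three pieces are independent. Hence
\[
\mathbb{E}\bigl(\varphi(M_T^+);M_T^-<d,H_I<H_S\bigr)=\int_{-\infty}^0\!\int_0^\infty \varphi(b-a)\,F_1(d)F_2(d)F_3(d)\,p_{IS}^{<}(a,b)\,db\,da ,
\]
where $F_1,F_2,F_3$ are the three conditional laws $\mathbb{P}^{<}_{a,b}\{M^-_{\cdot}<d\}$ of Theorem~\ref{theorem:mdd-HIHS}, and $p_{IS}^<$ is the density of Corollary~\ref{corollary_hinf<hsup,inf=a,sup=b}. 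I will then change variables to $r=b-a$ and $s=-a\in(0,r)$, so that $\varphi$ depends only on $r$. The split in the statement reflects the piecewise form of the $F_i$: for $r\le d$ all three factors equal $1$, while for $r>d$ each is nontrivial, with $F_1=0$ unless $s<d$.

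\textbf{Range $r<d$.} Here I only need $\int_0^r p_{IS}^<(-s,r-s)\,ds$. The bracket in $p_{IS}^<$ depends only on $r$, and the remaining factor $\bigl(W'(s)W(r)-W(s)W'(r)\bigr)/W(r)^2$ integrates in $s$ to $W(r)\bigl(W(r)-W(0)\bigr)/W(r)^2 - W'(r)\int_0^rW/W(r)^2$. Since $W(0)=0$ (unbounded variation) and $\int_0^rW=(Z(r)-1)/\gamma$, this equals $\bigl[\gamma W(r)-(Z(r)-1)W'(r)/W(r)\bigr]/(\gamma W(r))$. Using $Z'=\gamma W$, the bracket in $p_{IS}^<$ is exactly the same expression $\gamma W(r)-(Z(r)-1)W'(r)/W(r)$. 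Multiplying gives the squared numerator over $\gamma W(r)$, which is the first integrand.

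\textbf{Range $r>d$.} Now I multiply $p_{IS}^<$ by $F_1F_2F_3$. The denominators of $F_1$ and $F_3$ are designed to cancel against $p_{IS}^<$:
\begin{itemize}
\item $F_3$'s denominator is $-$(the bracket), so the product gives $-\bigl[(Z(d)-1)W'(d)/W(d)-\gamma W(d)\bigr]$, i.e.\ the $d$-bracket.
\item $1-F_1$ contains $h_1(a+d)/h_1(0)$, and $h_1(0)\cdot W(r)^2$ is the $s$-dependent factor of $p_{IS}^<$.
\end{itemize}
So $p_{IS}^<F_1=\bigl[h_1(0)-h_1(a+d)W(s)/W(d)\bigr]\times(\text{bracket at } r)$, valid for $s<d$. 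Its $s$-integral over $(0,d)$ should collapse by the same $\int W$ computation as above, now taken at level $d$ with $W(r)$ factors.

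$F_2$ supplies $\frac{W(r)}{W(d)}\exp\{-(r-d)W'(d)/W(d)\}$. After the $W(r)$ factors cancel, I expect to be left with the $d$-bracket squared over $\gamma W(d)$, times the exponential, which is the second term.

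\textbf{Main obstacle.} The hardest step is this last bookkeeping, specifically showing that the $s$-integral of $\bigl[W'(d)W(r)-W'(r)W(d)\bigr]W(s)/W(d)$ combined with the $h_1(0)$ term leaves exactly $W(r)\,\bigl[\gamma W(d)-(Z(d)-1)W'(d)/W(d)\bigr]/\gamma$ and nothing that depends on $r$ other than through $W(r)$. I would first verify this in the Brownian case using \eqref{eq:brownian-scale-functions} to fix signs, then carry out the general integration. Any mismatch would signal that $F_1$ must be applied on $s<d$ with the integration in $s$ restricted accordingly.
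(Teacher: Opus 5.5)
Your proposal is correct and follows the paper's proof essentially step for step. You condition on $(I_T,S_T)$ on $\{H_I<H_S\}$, substitute $s=-a$, $r=b-a$, split at $r=d$, let the denominators of $F_1$ and $F_3$ cancel against $p_{IS}^{<}$, and integrate using $W^{(\gamma)}(0)=0$ and $\int_0^r W^{(\gamma)}=(Z^{(\gamma)}(r)-1)/\gamma$.

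The step you flag as the main obstacle is settled by a one-line identity. Write $A(x,r)=W^{(\gamma)\prime}(x)W^{(\gamma)}(r)-W^{(\gamma)}(x)W^{(\gamma)\prime}(r)$; then $A(s,r)W^{(\gamma)}(d)-A(d,r)W^{(\gamma)}(s)=W^{(\gamma)}(r)\bigl(W^{(\gamma)\prime}(s)W^{(\gamma)}(d)-W^{(\gamma)\prime}(d)W^{(\gamma)}(s)\bigr)$. This is exactly how the paper simplifies $f_1$ before integrating over $s\in(0,d)$, and it yields the value $W^{(\gamma)}(r)\,[\gamma W^{(\gamma)}(d)-(Z^{(\gamma)}(d)-1)W^{(\gamma)\prime}(d)/W^{(\gamma)}(d)]/\gamma$ that you predicted.
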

\begin{proof}
On the event $\{H_I<H_S\}$,
\[
M_T^{+}=S_T-I_T,
\qquad
M_T^{-}
=
M_{0,H_I}^{-}\vee M_{H_I,H_S}^{-}\vee M_{H_S,T}^{-}.
\]
Consequently,
\begin{align*}
\Delta
&:=
\mathbb{E}\!\left[
\varphi(M_T^{+});
\,M_T^{-}<d,\ H_I<H_S
\right]
=
\mathbb{E}\!\left[
\varphi(S_T-I_T);
\,M_{0,H_I}^{-}<d,\,
M_{H_I,H_S}^{-}<d,\,
M_{H_S,T}^{-}<d,\,
H_I<H_S
\right].
\end{align*}
Conditioning on $(I_T,S_T,\mathbf{1}_{\{H_I<H_S\}})$ and applying
Theorem~\ref{theorem:mdd-HIHS} yield
\begin{align}
\Delta
={}&
\int_{-\infty}^{0}\int_{0}^{\infty}
\varphi(b-a)f(a,b)\mathbf{1}_{\{d>-a\}}
\nonumber\\
&\quad\times
\left[
f_1(d;a,b)f_2(d;a,b)f_3(d;a,b)
\mathbf{1}_{\{d<b-a\}}
+
\mathbf{1}_{\{d\geq b-a\}}
\right]\,db\,da,
\label{eq:Delta-conditioned}
\end{align}
where $f_1,f_2,f_3$ are the conditional distribution functions in
Theorem~\ref{theorem:mdd-HIHS}, and the joint density of
$(I_T,S_T)$ on $\{H_I<H_S\}$ is
\[
f(a,b)
=
\frac{
W^{(\gamma)\prime}(-a)W^{(\gamma)}(b-a)
-
W^{(\gamma)}(-a)W^{(\gamma)\prime}(b-a)
}{
\bigl(W^{(\gamma)}(b-a)\bigr)^2
}
\left[\gamma W^{(\gamma)}(b-a)-\bigl(Z^{(\gamma)}(b-a)-1\bigr)\dfrac{W^{(\gamma)\prime}(b-a)}{W^{(\gamma)}(b-a)}\right],
\]
by Corollary~\ref{corollary_hinf<hsup,inf=a,sup=b}. 

If $d\leq0$, then $\{d>-a\}=\varnothing$ for $a<0$, and hence
$\Delta=0$. Suppose henceforth that $d>0$. Set
\[
x=-a,\qquad r=b-a,
\]
then $0<x<r$, $b=r-x.$ Define
\[
A(x,r):=W^{(\gamma)\prime}(x)W^{(\gamma)}(r)-W^{(\gamma)}(x)W^{(\gamma)\prime}(r).
\]
For $r>d$, direct substitution then gives
\[
f_1(d;-x,r-x)
=
\frac{W^{(\gamma)}(r)}{A(x,r)}
\left(
W^{(\gamma)\prime}(x)-W^{(\gamma)}(x)\frac{W^{(\gamma)\prime}(d)}{W^{(\gamma)}(d)}
\right),
\]
while
\[
f_2(d;-x,r-x)
=
\frac{W^{(\gamma)}(r)}{W^{(\gamma)}(d)}
\exp\left\{
-(r-d)\frac{W^{(\gamma)\prime}(d)}{W^{(\gamma)}(d)}
\right\},
\qquad
f_3(d;-x,r-x)=\frac{\left[\gamma W^{(\gamma)}(d)-\bigl(Z^{(\gamma)}(d)-1\bigr)\dfrac{W^{(\gamma)\prime}(d)}{W^{(\gamma)}(d)}\right]}{\left[\gamma W^{(\gamma)}(r)-\bigl(Z^{(\gamma)}(r)-1\bigr)\dfrac{W^{(\gamma)\prime}(r)}{W^{(\gamma)}(r)}\right]}.
\]
Since
\[
f(-x,r-x)=\frac{A(x,r)}{W^{(\gamma)}(r)^2}\left[\gamma W^{(\gamma)}(r)-\bigl(Z^{(\gamma)}(r)-1\bigr)\dfrac{W^{(\gamma)\prime}(r)}{W^{(\gamma)}(r)}\right],
\]
all terms depending on $A(x,r)$ and $\left[\gamma W^{(\gamma)}(r)-\bigl(Z^{(\gamma)}(r)-1\bigr)\dfrac{W^{(\gamma)\prime}(r)}{W^{(\gamma)}(r)}\right]$ cancel, and therefore
\begin{align}
&f(-x,r-x)
f_1(d;-x,r-x)
f_2(d;-x,r-x)
f_3(d;-x,r-x)
\nonumber\\
&\quad=
\frac{\left[\gamma W^{(\gamma)}(d)-\bigl(Z^{(\gamma)}(d)-1\bigr)\dfrac{W^{(\gamma)\prime}(d)}{W^{(\gamma)}(d)}\right]}{W^{(\gamma)}(d)}
\left(
W^{(\gamma)\prime}(x)-W^{(\gamma)}(x)\frac{W^{(\gamma)\prime}(d)}{W^{(\gamma)}(d)}
\right)
\exp\left\{
-(r-d)\frac{W^{(\gamma)\prime}(d)}{W^{(\gamma)}(d)}
\right\}.
\label{eq:product-simplification}
\end{align}

The constraint $d>-a$ becomes $0<x<d$. Splitting the integration
domain according as $r\leq d$ or $r>d$, Equation
\eqref{eq:Delta-conditioned} becomes
\begin{align}
\Delta
={}&
\int_0^d\int_x^d
\varphi(r)f(-x,r-x)\,dr\,dx
+
\int_0^d\int_d^\infty
\varphi(r)f(-x,r-x)
\prod_{i=1}^{3}f_i(d;-x,r-x)\,dr\,dx.
\label{eq:Delta-split}
\end{align}

Changing the order of integration in the first term and using
$
Z^{(\gamma)}(r)-1
=
\gamma\int_0^r W^{(\gamma)}(x)\,dx,
$
we have
\begin{align*}
\int_0^r
\bigl(W^{(\gamma)\prime}(x)W^{(\gamma)}(r)-W^{(\gamma)}(x)W^{(\gamma)\prime}(r)\bigr)\,dx
&=
W^{(\gamma)}(r)\bigl(W^{(\gamma)}(r)-W^{(\gamma)}(0)\bigr)
-
\frac{Z^{(\gamma)}(r)-1}{\gamma}W^{(\gamma)\prime}(r)
\\
&=
W^{(\gamma)}(r)\left(\frac{\left[\gamma W^{(\gamma)}(r)-\bigl(Z^{(\gamma)}(r)-1\bigr)\dfrac{W^{(\gamma)\prime}(r)}{W^{(\gamma)}(r)}\right]}{\gamma}\right).
\end{align*}
Thus the first term in \eqref{eq:Delta-split} is
\[
\int_0^d
\varphi(r)
\frac{\left[\gamma W^{(\gamma)}(r)-\bigl(Z^{(\gamma)}(r)-1\bigr)\dfrac{W^{(\gamma)\prime}(r)}{W^{(\gamma)}(r)}\right]}{W^{(\gamma)}(r)}
\left(
\frac{\left[\gamma W^{(\gamma)}(r)-\bigl(Z^{(\gamma)}(r)-1\bigr)\dfrac{W^{(\gamma)\prime}(r)}{W^{(\gamma)}(r)}\right]}{\gamma}
\right)\,dr.
\]

Similarly, by \eqref{eq:product-simplification},
\begin{align*}
&\int_0^d
\left(
W^{(\gamma)\prime}(x)-W^{(\gamma)}(x)\frac{W^{(\gamma)\prime}(d)}{W^{(\gamma)}(d)}
\right)\,dx
\\
&\quad=
W^{(\gamma)}(d)
-
\frac{Z^{(\gamma)}(d)-1}{\gamma}
\frac{W^{(\gamma)\prime}(d)}{W^{(\gamma)}(d)}
=
\frac{\left[\gamma W^{(\gamma)}(d)-\bigl(Z^{(\gamma)}(d)-1\bigr)\dfrac{W^{(\gamma)\prime}(d)}{W^{(\gamma)}(d)}\right]}{\gamma}.
\end{align*}
It follows that the second term in \eqref{eq:Delta-split} equals
\[
\frac{\left[\gamma W^{(\gamma)}(d)-\bigl(Z^{(\gamma)}(d)-1\bigr)\dfrac{W^{(\gamma)\prime}(d)}{W^{(\gamma)}(d)}\right]}{W^{(\gamma)}(d)}
\left(
\frac{\left[\gamma W^{(\gamma)}(d)-\bigl(Z^{(\gamma)}(d)-1\bigr)\dfrac{W^{(\gamma)\prime}(d)}{W^{(\gamma)}(d)}\right]}{\gamma}
\right)
\int_d^\infty
\varphi(r)
\exp\left\{
-(r-d)\frac{W^{(\gamma)\prime}(d)}{W^{(\gamma)}(d)}
\right\}\,dr.
\]
Hence,
\begin{align*}
\Delta
={}&
\int_0^d
\varphi(r)
\frac{\left[\gamma W^{(\gamma)}(r)-\bigl(Z^{(\gamma)}(r)-1\bigr)\dfrac{W^{(\gamma)\prime}(r)}{W^{(\gamma)}(r)}\right]^2}
     {\gamma W^{(\gamma)}(r)}\,dr
\\
&+
\frac{\left[\gamma W^{(\gamma)}(d)-\bigl(Z^{(\gamma)}(d)-1\bigr)\dfrac{W^{(\gamma)\prime}(d)}{W^{(\gamma)}(d)}\right]^2}
     {\gamma W^{(\gamma)}(d)}
\int_d^\infty
\varphi(r)
\exp\left\{
-(r-d)
\frac{W^{(\gamma)\prime}(d)}
     {W^{(\gamma)}(d)}
\right\}\,dr,
\end{align*}
which proves the statement and substitution of \eqref{eq:brownian-scale-functions} gives
\[
\Delta
=
-\sqrt{2\gamma}
\frac{\bigl(\cosh(\sqrt{2\gamma} d)-1\bigr)^2}{\sinh^3(\sqrt{2\gamma} d)}
\int_d^\infty \varphi(x)e^{-(x-d)\sqrt{2\gamma}\coth(\kappa d)}\,dx-\int_0^d \varphi(x)\,
\sqrt{2\gamma}\tanh\!\left(\frac{\sqrt{2\gamma} x}{2}\right)
\frac{\cosh(\sqrt{2\gamma} x)-1}{\sinh^2(\sqrt{2\gamma} x)}
\,dx,\]
which is in agreement with the result given in \cite{Salminen2007} for the standard Brownian motion.
\end{proof}
\begin{corollary}
Let $\alpha,\beta>0.$ Then
\[
\begin{aligned}
&\mathbb{P}\!\left(
M_T^-<\alpha,\,
M_T^+<\beta,\,
H_I<H_S
\right)\\[1mm]
&=
\begin{cases}
\displaystyle
\int_0^\beta
\frac{
\left[
\gamma W^{(\gamma)}(r)
-
\bigl(Z^{(\gamma)}(r)-1\bigr)
\dfrac{W^{(\gamma)\prime}(r)}
{W^{(\gamma)}(r)}
\right]^2
}{
\gamma W^{(\gamma)}(r)
}\,dr,
& 0<\beta\leq\alpha,
\\[6mm]
\displaystyle
\int_0^\alpha
\frac{
\left[
\gamma W^{(\gamma)}(r)
-
\bigl(Z^{(\gamma)}(r)-1\bigr)
\dfrac{W^{(\gamma)\prime}(r)}
{W^{(\gamma)}(r)}
\right]^2
}{
\gamma W^{(\gamma)}(r)
}\,dr
\\[3mm]
\displaystyle\qquad
+
\frac{
\left[
\gamma W^{(\gamma)}(\alpha)
-
\bigl(Z^{(\gamma)}(\alpha)-1\bigr)
\dfrac{W^{(\gamma)\prime}(\alpha)}
{W^{(\gamma)}(\alpha)}
\right]^2
}{
\gamma W^{(\gamma)\prime}(\alpha)
}
\\[3mm]
\displaystyle\qquad\qquad\times
\left[
1-
\exp\!\left\{
-(\beta-\alpha)
\frac{W^{(\gamma)\prime}(\alpha)}
{W^{(\gamma)}(\alpha)}
\right\}
\right],
& \beta>\alpha.
\end{cases}
\end{aligned}
\]
\end{corollary}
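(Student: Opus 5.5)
The plan is to specialize Theorem~\ref{thm:HIHS-contribution} by taking $d=\alpha$ and $\varphi(r)=\mathbf{1}_{\{r<\beta\}}$. This $\varphi$ is bounded, measurable and nonnegative, so the theorem applies and gives
\[
\mathbb{P}\!\left(M_T^-<\alpha,\,M_T^+<\beta,\,H_I<H_S\right)=\mathbb{E}\!\left(\mathbf{1}_{\{M_T^+<\beta\}};\,M_T^-<\alpha,\,H_I<H_S\right).
\]
To keep notation short, write
\[
G(r):=\gamma W^{(\gamma)}(r)-\bigl(Z^{(\gamma)}(r)-1\bigr)\frac{W^{(\gamma)\prime}(r)}{W^{(\gamma)}(r)}.
\]
Then \eqref{eq:HIHS-contribution} reads
\[
\int_0^\alpha \mathbf{1}_{\{r<\beta\}}\frac{G(r)^2}{\gamma W^{(\gamma)}(r)}\,dr+\frac{G(\alpha)^2}{\gamma W^{(\gamma)}(\alpha)}\int_\alpha^\infty \mathbf{1}_{\{r<\beta\}}\,e^{-(r-\alpha)W^{(\gamma)\prime}(\alpha)/W^{(\gamma)}(\alpha)}\,dr.
\]
Whether the inequality $r<\beta$ is strict does not matter, because $M_T^+=S_T-I_T$ has a density on $\{H_I<H_S\}$ (by Corollary~\ref{corollary_hinf<hsup,inf=a,sup=b}).

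Next split into the two cases. If $0<\beta\le\alpha$, the indicator vanishes on all of $[\alpha,\infty)$, so the second term is zero. In the first term the indicator only shortens the range of integration to $[0,\beta]$. This gives the first branch.

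If $\beta>\alpha$, the indicator equals one on $[0,\alpha]$, so the first term becomes $\int_0^\alpha G(r)^2/(\gamma W^{(\gamma)}(r))\,dr$. The second term is the explicit exponential integral
\[
\int_\alpha^\beta e^{-(r-\alpha)\kappa}\,dr=\frac{1}{\kappa}\bigl(1-e^{-(\beta-\alpha)\kappa}\bigr),\qquad \kappa:=\frac{W^{(\gamma)\prime}(\alpha)}{W^{(\gamma)}(\alpha)}.
\]
Multiplying by the prefactor $G(\alpha)^2/(\gamma W^{(\gamma)}(\alpha))$, the factor $W^{(\gamma)}(\alpha)$ cancels. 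The prefactor becomes $G(\alpha)^2/(\gamma W^{(\gamma)\prime}(\alpha))$, which is exactly the second branch.

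The only point needing care is that $\kappa>0$, so that dividing by $\kappa$ in the exponential integral is legitimate. To verify it, use $W^{(\gamma)}(x)=e^{\Phi(\gamma)x}W_{\Phi(\gamma)}(x)$, where $W_{\Phi(\gamma)}$ is nondecreasing. This gives $W^{(\gamma)\prime}\ge\Phi(\gamma)W^{(\gamma)}>0$ on $(0,\infty)$, hence $\kappa>0$.

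Beyond this positivity check, the argument is routine bookkeeping. The substantive work has already been done in Theorem~\ref{thm:HIHS-contribution}.
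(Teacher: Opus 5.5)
Your proposal is correct and is the paper's proof: set $d=\alpha$ and $\varphi=\mathbf{1}_{\{\cdot<\beta\}}$ in Theorem~\ref{thm:HIHS-contribution}, then evaluate the elementary exponential integral, a step the paper leaves implicit. Your remarks on strict versus non-strict inequality and on $W^{(\gamma)\prime}(\alpha)/W^{(\gamma)}(\alpha)>0$ are harmless extras; the first is not needed, since the theorem already allows any bounded measurable $\varphi$.
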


\begin{proof}
The assertion follows directly from Theorem~\ref{thm:HIHS-contribution} by taking
\[
\mathbb{P}\!\left(
M_T^-<\alpha,\,
M_T^+<\beta,\,
H_I<H_S
\right)
=
\left.\mathbb{E}\!\left(\varphi(M_T^+);\,M_T^-<d,\,H_I<H_S\right)\right|_{d=\alpha,\ \varphi(x)=\mathbf{1}_{\{x<\beta\}}}.
\]
\end{proof}
\begin{corollary}
Let $\beta>0.$ Then
\[
\mathbb P\!\left(
M_T^+<\beta,\,
H_I<H_S
\right)
=
\int_0^\beta
\frac{
\left[
\gamma W^{(\gamma)}(r)
-
\bigl(Z^{(\gamma)}(r)-1\bigr)
\dfrac{W^{(\gamma)\prime}(r)}
{W^{(\gamma)}(r)}
\right]^2
}{
\gamma W^{(\gamma)}(r)
}\,dr,
\qquad \beta>0.
\]
\end{corollary}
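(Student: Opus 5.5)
The plan is to remove the drawdown constraint from Theorem~\ref{thm:HIHS-contribution} by letting $d\to\infty$ with $\varphi(x)=\mathbf{1}_{\{x<\beta\}}$. Equivalently, I can read the claim off the preceding corollary in the regime $\beta\le\alpha$ and then let $\alpha\to\infty$. Since $\{M_T^-<\alpha\}\uparrow\Omega$ almost surely as $\alpha\to\infty$ ($M_T^-\le S_T-I_T<\infty$ a.s.), monotone convergence gives
\[
\mathbb P(M_T^+<\beta,\,H_I<H_S)=\lim_{\alpha\to\infty}\mathbb P(M_T^-<\alpha,\,M_T^+<\beta,\,H_I<H_S).
\]
For every $\alpha\ge\beta$, the first branch of the previous corollary already gives the claimed integral, and that integral does not depend on $\alpha$. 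So the limit is trivial and the statement follows immediately.

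To make the argument self-contained, I would also check it directly from Theorem~\ref{thm:HIHS-contribution}. Take $\varphi=\mathbf{1}_{[0,\beta)}$ and $d>\beta$. The second term there contains $\int_d^\infty\varphi(r)\,e^{-(r-d)W^{(\gamma)\prime}(d)/W^{(\gamma)}(d)}\,dr$, and this vanishes because $\varphi\equiv0$ on $[d,\infty)$. The first term reduces to $\int_0^\beta$ of the stated density. No boundary issue arises at $r=\beta$, because the density is integrable near $0$.

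As a cross-check, I would verify the formula independently: on $\{H_I<H_S\}$ we have $M_T^+=S_T-I_T$, so the left side equals $\int\!\!\int_{b-a<\beta}p^{<}_{IS}(a,b)\,da\,db$. Substituting $x=-a$, $r=b-a$ and using Corollary~\ref{corollary_hinf<hsup,inf=a,sup=b}, the inner integral $\int_0^r A(x,r)\,dx$ equals $W^{(\gamma)}(r)\,[\gamma W^{(\gamma)}(r)-(Z^{(\gamma)}(r)-1)W^{(\gamma)\prime}(r)/W^{(\gamma)}(r)]/\gamma$, exactly as computed in the proof of Theorem~\ref{thm:HIHS-contribution}. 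Multiplying by the remaining factor of $p^{<}_{IS}$ reproduces the squared bracket divided by $\gamma W^{(\gamma)}(r)$.

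The only point needing care is an apparent mismatch of brackets. Corollary~\ref{corollary_hinf<hsup,inf=a,sup=b} is written with $Z^{(\gamma)\prime}(b-a)=\gamma W^{(\gamma)}(b-a)$, which is the same bracket, so the computation is consistent. I expect no real obstacle.
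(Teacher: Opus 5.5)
Your proposal is correct and follows the paper's own route. You let $\alpha\to\infty$ in the preceding corollary: for every $\alpha\ge\beta$, its first branch is already the $\alpha$-independent integral $\int_0^\beta$, so the limit is immediate. Your direct check from Theorem~\ref{thm:HIHS-contribution} and your cross-check via $p^{<}_{IS}$ are consistent extras. In fact no limit is needed at all: on $\{H_I<H_S\}$ one has $M_T^-\le S_T-I_T=M_T^+$, so $\{M_T^+<\beta,\,H_I<H_S\}\subseteq\{M_T^-<\alpha\}$ whenever $\alpha\ge\beta$.
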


\begin{proof}
Letting $\alpha\to\infty$ in the preceding corollary yields
$
\alpha\wedge\beta \longrightarrow \beta.
$
Moreover, for fixed $\beta>0$, the indicator
$\mathbf 1_{\{\beta>\alpha\}}$ vanishes for all sufficiently large $\alpha$.
Therefore, the second term in the joint expression disappears, and the remaining
integral is taken over $(0,\beta)$.
\end{proof}
\begin{corollary}
Let $\alpha>0$. Then
\[
\begin{aligned}
&\mathbb P\!\left(
M_T^-<\alpha,\,
H_I<H_S
\right)
\\[1mm]
&=
\int_0^\alpha
\frac{
\left[
\gamma W^{(\gamma)}(r)
-
\bigl(Z^{(\gamma)}(r)-1\bigr)
\dfrac{W^{(\gamma)\prime}(r)}
{W^{(\gamma)}(r)}
\right]^2
}{
\gamma W^{(\gamma)}(r)
}\,dr
+
\frac{
\left[
\gamma W^{(\gamma)}(\alpha)
-
\bigl(Z^{(\gamma)}(\alpha)-1\bigr)
\dfrac{W^{(\gamma)\prime}(\alpha)}
{W^{(\gamma)}(\alpha)}
\right]^2
}{
\gamma W^{(\gamma)\prime}(\alpha)
},~\alpha>0.
\end{aligned}
\]
\end{corollary}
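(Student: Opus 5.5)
The plan is to read the statement off Theorem~\ref{thm:HIHS-contribution} with $d=\alpha$ and $\varphi\equiv 1$, which is legitimate because $\varphi$ only needs to be bounded, measurable and nonnegative. Equivalently, we can let $\beta\to\infty$ in the first preceding corollary (the $\beta>\alpha$ branch) and use monotone convergence, since $\{M_T^+<\beta\}\uparrow\Omega$ as $\beta\to\infty$ ($M_T^+$ is finite a.s.). Both routes give the same thing. The first integral $\int_0^\alpha(\cdots)\,dr$ is unchanged, and the remaining term becomes the constant
\[
\frac{\left[\gamma W^{(\gamma)}(\alpha)-\bigl(Z^{(\gamma)}(\alpha)-1\bigr)\frac{W^{(\gamma)\prime}(\alpha)}{W^{(\gamma)}(\alpha)}\right]^2}{\gamma W^{(\gamma)}(\alpha)}
\]
multiplied by $\int_\alpha^\infty \exp\{-(r-\alpha)W^{(\gamma)\prime}(\alpha)/W^{(\gamma)}(\alpha)\}\,dr$.

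The key computation is this exponential integral, which equals $W^{(\gamma)}(\alpha)/W^{(\gamma)\prime}(\alpha)$. It requires $W^{(\gamma)\prime}(\alpha)>0$, which holds because $W^{(\gamma)}$ is strictly increasing on $(0,\infty)$ for $\gamma>0$; this justifies convergence and, in the limit route, the vanishing of $\exp\{-(\beta-\alpha)W^{(\gamma)\prime}(\alpha)/W^{(\gamma)}(\alpha)\}$ as $\beta\to\infty$. The factor $W^{(\gamma)}(\alpha)$ cancels against the denominator $\gamma W^{(\gamma)}(\alpha)$, leaving $\gamma W^{(\gamma)\prime}(\alpha)$ in the denominator, exactly as stated.

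The only point needing care is the interchange of limit and expectation, or equivalently the applicability of Theorem~\ref{thm:HIHS-contribution} to $\varphi\equiv1$. Both are routine by monotone convergence, since all integrands are nonnegative: the bracket is nonnegative because it is proportional to $\int_0^r A(x,r)\,dx$ with $A$ the conditional-density numerator from Corollary~\ref{corollary_hinf<hsup,inf=a,sup=b}. As a sanity check I would verify that letting $\alpha\to\infty$ recovers $\mathbb P(H_I<H_S)$, consistent with the previous corollary as $\beta\to\infty$. I would also check the Brownian case against \cite{Salminen2007}.
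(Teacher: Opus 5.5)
Your argument is correct and is essentially the paper's. The paper lets $\beta\to\infty$ in the preceding joint corollary, so that $1-\exp\{-(\beta-\alpha)W^{(\gamma)\prime}(\alpha)/W^{(\gamma)}(\alpha)\}\to1$; this is the same computation as your substitution $\varphi\equiv1$, $d=\alpha$ in Theorem~\ref{thm:HIHS-contribution} with $\int_\alpha^\infty e^{-(r-\alpha)W^{(\gamma)\prime}(\alpha)/W^{(\gamma)}(\alpha)}\,dr=W^{(\gamma)}(\alpha)/W^{(\gamma)\prime}(\alpha)$. One small point: strict monotonicity of $W^{(\gamma)}$ alone does not give $W^{(\gamma)\prime}(\alpha)>0$, but the bound $W^{(\gamma)\prime}(x)\ge\Phi(\gamma)W^{(\gamma)}(x)>0$ does, and it follows from $W^{(\gamma)}(x)=e^{\Phi(\gamma)x}W_{\Phi(\gamma)}(x)$ with $W_{\Phi(\gamma)}$ nondecreasing.
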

\begin{proof}
Letting $\beta\to\infty$ in the joint identity gives
$
\alpha\wedge\beta \longrightarrow \alpha,
~
\mathbf 1_{\{\beta>\alpha\}} \longrightarrow 1.$ Hence the second
term remains, while the exponential factor converges to one. 
\end{proof}

\begin{theorem}\label{thm:HSHI-contribution}
Let $u>0$, and let $\varphi:\mathbb{R}_+\to\mathbb{R}_+$ be bounded and measurable. Then
\begin{align*}
&\mathbb{E}\!\left(\varphi(M_T^-);\,M_T^+<u,\,H_S<H_I\right)\\
&=\frac{
\left(Z^{(\gamma)}(u)-1\right)^2
\left[
Z^{(\gamma)}(u)W_{+}^{(\gamma)\prime}(u)
-
\gamma\left(W^{(\gamma)}(u)\right)^2
\right]
}{
\gamma\left(W^{(\gamma)}(u)\right)^3
}
\int_u^\infty
\varphi(r)
\exp\!\left\{-(r-u)\frac{W_{+}^{(\gamma)\prime}(u)}{W^{(\gamma)}(u)}\right\}
\frac{
\left(W^{(\gamma)\prime}(r)\right)^2
-
W^{(\gamma)}(r)W^{(\gamma)\prime\prime}(r)
}{
Z^{(\gamma)}(r)W_{+}^{(\gamma)\prime}(r)
-
\gamma\left(W^{(\gamma)}(r)\right)^2
}
\,dr
\\[1ex]
&\quad+
\frac{1}{\gamma}
\int_0^u
\varphi(r)
\frac{
\left(Z^{(\gamma)}(r)-1\right)^2
\left[
\left(W^{(\gamma)\prime}(r)\right)^2
-
W^{(\gamma)}(r)W^{(\gamma)\prime\prime}(r)
\right]
}{
\left(W^{(\gamma)}(r)\right)^3
}
\,dr.
\end{align*}
\end{theorem}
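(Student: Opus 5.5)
The plan mirrors the proof of Theorem~\ref{thm:HIHS-contribution}, with the roles of drawdown and drawup exchanged. On $\{H_S<H_I\}$ the maximum drawdown equals the range, $M_T^-=S_T-I_T$. The maximum drawup is the largest of the three segmentwise drawups,
\[
M_T^+=M_{0,H_S}^+\vee M_{H_S,H_I}^+\vee M_{H_I,T}^+ .
\]
I will condition on $(I_T,S_T)=(a,b)$ and on the ordering. Theorem~\ref{Prop_H_S<H_I} makes the three pieces independent, so
\[
\mathbb{E}\bigl(\varphi(M_T^-);M_T^+<u,H_S<H_I\bigr)=\int_{-\infty}^0\!\int_0^\infty \varphi(b-a)\,p_{IS}^{>}(a,b)\,g_1g_2g_3\,db\,da ,
\]
where $p_{IS}^{>}$ comes from Corollary~\ref{corollary_hsup<hinf,inf=a,sup=b}. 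The factors $g_1,g_2,g_3$ are the three conditional distribution functions of Theorem~\ref{theorem:mdu-HSHI}, each evaluated at level $u$ and extended by $1$ once the constraint is automatically satisfied.

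The pre-supremum drawup is at least $b$, because the path starts at $0$ and ends at $b$. This forces $b<u$, so the factor carries an indicator $\mathbf 1_{\{b<u\}}$. All three factors equal $1$ when $u\ge b-a$.

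I will change variables to $x=b$ (so $0<x<u$) and $r=b-a$ (so $r>x$). The domain then splits into two regions.

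\textbf{Region $r\le u$.} Only $p_{IS}^{>}$ survives. Integrating out $x\in(0,r)$ uses
\[
\int_0^r W^{(\gamma)}(r-x)\,dx=\frac{Z^{(\gamma)}(r)-1}{\gamma}.
\]
This produces directly the second term,
\[
\frac1\gamma\int_0^u\varphi(r)\,\frac{(Z^{(\gamma)}(r)-1)^2\bigl[(W^{(\gamma)\prime}(r))^2-W^{(\gamma)}(r)W^{(\gamma)\prime\prime}(r)\bigr]}{(W^{(\gamma)}(r))^3}\,dr .
\]

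\textbf{Region $r>u$.} I multiply $p_{IS}^{>}$ by $g_1g_2g_3$ and look for cancellations, as in \eqref{eq:product-simplification}.
\begin{itemize}
\item In $g_1$, the factor $W^{(\gamma)}(r)/W^{(\gamma)}(-a)$ cancels the $W^{(\gamma)}(-a)$ and one power of $W^{(\gamma)}(r)$ in $p_{IS}^{>}$. It leaves $W^{(\gamma)}(u-x)/W^{(\gamma)}(u)$.
\item In $g_3$, the factor $W^{(\gamma)}(r)/(Z^{(\gamma)}(r)-1)$ kills the $(Z^{(\gamma)}(r)-1)$ and another power of $W^{(\gamma)}(r)$. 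It brings in $(Z^{(\gamma)}(u)-1)/W^{(\gamma)}(u)$.
\item The factor $g_2$ contributes $e^{-(r-u)W_+^{(\gamma)\prime}(u)/W^{(\gamma)}(u)}$. Its denominator $\gamma W^{(\gamma)}(r)-Z^{(\gamma)}(r)W^{(\gamma)\prime}(r)/W^{(\gamma)}(r)$ absorbs the last $W^{(\gamma)}(r)$. Up to sign it becomes $\bigl(Z^{(\gamma)}(r)W_+^{(\gamma)\prime}(r)-\gamma (W^{(\gamma)}(r))^2\bigr)/W^{(\gamma)}(r)$. This is the denominator appearing in the first term.
\end{itemize}
The $x$-dependence is then only $W^{(\gamma)}(u-x)$. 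Integrating over $0<x<u$ gives $(Z^{(\gamma)}(u)-1)/\gamma$. This supplies the second factor $(Z^{(\gamma)}(u)-1)$ in the prefactor. The numerator $Z^{(\gamma)}(u)W_+^{(\gamma)\prime}(u)-\gamma(W^{(\gamma)}(u))^2$ comes from $g_2$'s numerator, and collecting powers of $W^{(\gamma)}(u)$ gives the cube. Matching the prefactor is routine bookkeeping once the cancellations are in place.

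\textbf{Main obstacle.} The delicate point is the correct form and sign of $g_2$, the intermediate drawup law. The version in Theorem~\ref{theorem:mdu-HSHI}(ii) is a ratio of two L'H\^opital limits, and its sign conventions must be reconciled so that the ratio is a genuine probability. Concretely, the numerator and denominator should both be read as $Z^{(\gamma)}W^{(\gamma)\prime}/W^{(\gamma)}-\gamma W^{(\gamma)}$. I would first check the boundary behaviour $g_2\to1$ as $u\uparrow b-a$, and then fix the orientation of that expression.

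I also need the boundary conventions to be handled consistently, since the formulas involve $W_+^{(\gamma)\prime}$. I would do a final sanity check by substituting the Brownian scale functions \eqref{eq:brownian-scale-functions} and comparing with \cite{Salminen2007}, as done after Theorem~\ref{thm:HIHS-contribution}.
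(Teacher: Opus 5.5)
Your proposal is correct and follows the paper's proof essentially step for step. You condition on $(I_T,S_T)$ under $H_S<H_I$ and multiply $p_{IS}^{>}$ by the three segment laws of Theorem~\ref{theorem:mdu-HSHI} with the indicator $\mathbf 1_{\{b<u\}}$; you then pass to $r=b-a$, split at $r=u$, and integrate $W^{(\gamma)}(u-b)$, resp.\ $W^{(\gamma)}(r-b)$, over $b$ to obtain the $(Z^{(\gamma)}-1)/\gamma$ factors. Your concern about the sign of $g_2$ is harmless: numerator and denominator both equal $-\bigl[Z^{(\gamma)}W_+^{(\gamma)\prime}-\gamma(W^{(\gamma)})^2\bigr]/W^{(\gamma)}$, so the signs cancel in the ratio, and this is exactly the identity the paper uses.
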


\begin{proof}
On the event $\{H_S<H_I\}$,
\[
M_T^{-}=S_T-I_T,
\qquad
M_T^{+}
=
M_{0,H_S}^{+}
\vee M_{H_S,H_I}^{+}
\vee M_{H_I,T}^{+}.
\]
Therefore, writing
\[
\Theta(u)
:=
\mathbb{E}\!\left[
\varphi(M_T^{-});\,M_T^{+}<u,\ H_S<H_I
\right],
\]
we have
\begin{align*}
\Theta(u)
=
\mathbb{E}\!\left[
\varphi(S_T-I_T);
\,M_{0,H_S}^{+}<u,\,
M_{H_S,H_I}^{+}<u,\,
M_{H_I,T}^{+}<u,\,
H_S<H_I
\right].
\end{align*}
Conditioning on $(I_T,S_T,\mathbf{1}_{\{H_S<H_I\}})$ and applying
Theorem~\ref{theorem:mdu-HSHI} gives
\begin{align}
\Theta(u)
={}&
\int_{-\infty}^{0}\int_0^\infty
\varphi(b-a)g(a,b)\mathbf{1}_{\{u>b\}}
\times
\left[
\prod_{i=1}^{3}g_i(u;a,b)\,
\mathbf{1}_{\{u<b-a\}}
+
\mathbf{1}_{\{u\geq b-a\}}
\right]\,db\,da,
\label{eq:Theta-conditioned}
\end{align}
where
\[
g(a,b)
=
\frac{
W^{(\gamma)}(-a)\bigl(Z^{(\gamma)}(b-a)-1\bigr)
}{
\bigl(W^{(\gamma)}(b-a)\bigr)^3
}
\left[
\bigl(W^{(\gamma)\prime}(b-a)\bigr)^2
-
W^{(\gamma)}(b-a)W^{(\gamma)\prime\prime}(b-a)
\right].
\]

For notational convenience, set
\[
A_\gamma(r)
:=
\bigl(W^{(\gamma)\prime}(r)\bigr)^2
-
W^{(\gamma)}(r)W^{(\gamma)\prime\prime}(r)
\]
and
since $Z^{(\gamma)\prime}(r)=\gamma W^{(\gamma)}(r).$

If $u\leq0$, the condition $u>b$ is impossible because $b>0$.
Hence $\Theta(u)=0$. Suppose henceforth that $u>0$ and write
$r=b-a$. Substitution of $g$ and $g_1,g_2,g_3$ yields
\begin{align*}
&g(a,b)\prod_{i=1}^{3}g_i(u;a,b)
=
\frac{
W^{(\gamma)}(u-b)
\bigl(Z^{(\gamma)}(u)-1\bigr)
A_\gamma(r)
}{
\bigl(W^{(\gamma)}(u)\bigr)^2W^{(\gamma)}(r)
}
\exp\left\{
-(r-u)
\frac{W_{+}^{(\gamma)\prime}(u)}
     {W^{(\gamma)}(u)}
\right\}
\times
\frac{
Z^{(\gamma)\prime}(u)
-
\dfrac{Z^{(\gamma)}(u)}
      {W^{(\gamma)}(u)}
W_{+}^{(\gamma)\prime}(u)
}{
Z^{(\gamma)\prime}(r)
-
\dfrac{Z^{(\gamma)}(r)}
      {W^{(\gamma)}(r)}
W_{+}^{(\gamma)\prime}(r)
}.
\end{align*}
Using the identity
\[
Z^{(\gamma)\prime}(x)
-
\frac{Z^{(\gamma)}(x)}
     {W^{(\gamma)}(x)}
W_{+}^{(\gamma)\prime}(x)
=
-\frac{\left[Z^{(\gamma)}(x)W_+^{(\gamma)\prime}(x)-\gamma\bigl(W^{(\gamma)}(x)\bigr)^2\right]}{W^{(\gamma)}(x)},
\]
this simplifies to
\begin{align}
g(a,b)\prod_{i=1}^{3}g_i(u;a,b)
={}&
\frac{
W^{(\gamma)}(u-b)
\bigl(Z^{(\gamma)}(u)-1\bigr)\left[Z^{(\gamma)}(u)W_+^{(\gamma)\prime}(u)-\gamma\bigl(W^{(\gamma)}(u)\bigr)^2\right]
}{
\bigl(W^{(\gamma)}(u)\bigr)^3\left[Z^{(\gamma)}(r)W_+^{(\gamma)\prime}(r)-\gamma\bigl(W^{(\gamma)}(r)\bigr)^2\right]
}
\nonumber\\
&\quad\times
A_\gamma(r)
\exp\left\{
-(r-u)
\frac{W_{+}^{(\gamma)\prime}(u)}
     {W^{(\gamma)}(u)}
\right\}.
\label{eq:g-product}
\end{align}

Since $a<0$, we have $r=b-a>b$. Under the transformation
\[
r=b-a,\qquad a=b-r,
\]
the two regions appearing in \eqref{eq:Theta-conditioned} become
\[
\{u>b,\ u<r\}
=
\{0<b<u,\ u<r<\infty\}
\]
and
\[
\{u>b,\ r\leq u\}
=
\{0<b<u,\ b<r\leq u\},
\]
respectively. Hence, by \eqref{eq:g-product},
\begin{align}
\Theta(u)
={}&
\int_0^u\int_u^\infty
\varphi(r)
\frac{
W^{(\gamma)}(u-b)
\bigl(Z^{(\gamma)}(u)-1\bigr)\left[Z^{(\gamma)}(u)W_+^{(\gamma)\prime}(u)-\gamma\bigl(W^{(\gamma)}(u)\bigr)^2\right]
}{
\bigl(W^{(\gamma)}(u)\bigr)^3\left[Z^{(\gamma)}(r)W_+^{(\gamma)\prime}(r)-\gamma\bigl(W^{(\gamma)}(r)\bigr)^2\right]
}
\nonumber\\
&\hspace{2cm}\times
A_\gamma(r)
\exp\left\{
-(r-u)
\frac{W_{+}^{(\gamma)\prime}(u)}
     {W^{(\gamma)}(u)}
\right\}\,dr\,db
\nonumber\\
&+
\int_0^u\int_b^u
\varphi(r)
\frac{
W^{(\gamma)}(r-b)
\bigl(Z^{(\gamma)}(r)-1\bigr)A_\gamma(r)
}{
\bigl(W^{(\gamma)}(r)\bigr)^3
}\,dr\,db.
\label{eq:Theta-split}
\end{align}

For the first term, the identity
\[
\int_0^u W^{(\gamma)}(u-b)\,db
=
\int_0^u W^{(\gamma)}(y)\,dy
=
\frac{Z^{(\gamma)}(u)-1}{\gamma}
\]
gives
\begin{align*}
&\frac{
\bigl(Z^{(\gamma)}(u)-1\bigr)^2\left[Z^{(\gamma)}(u)W_+^{(\gamma)\prime}(u)-\gamma\bigl(W^{(\gamma)}(u)\bigr)^2\right]
}{
\gamma\bigl(W^{(\gamma)}(u)\bigr)^3
}
\int_u^\infty
\varphi(r)
\exp\left\{
-(r-u)
\frac{W_{+}^{(\gamma)\prime}(u)}
     {W^{(\gamma)}(u)}
\right\}
\frac{A_\gamma(r)}{\left[Z^{(\gamma)}(r)W_+^{(\gamma)\prime}(r)-\gamma\bigl(W^{(\gamma)}(r)\bigr)^2\right]}\,dr.
\end{align*}
For the second term, changing the order of integration and using
$
\int_0^r W^{(\gamma)}(r-b)\,db
=
\frac{Z^{(\gamma)}(r)-1}{\gamma}
$
give
\[
\frac{1}{\gamma}
\int_0^u
\varphi(r)
\frac{
\bigl(Z^{(\gamma)}(r)-1\bigr)^2A_\gamma(r)
}{
\bigl(W^{(\gamma)}(r)\bigr)^3
}\,dr.
\]
Combining the two expressions, we obtain, for $u>0$,
\begin{align*}
&\Theta(u)
=
\frac{
\bigl(Z^{(\gamma)}(u)-1\bigr)^2
\left[
Z^{(\gamma)}(u)W_{+}^{(\gamma)\prime}(u)
-\gamma\bigl(W^{(\gamma)}(u)\bigr)^2
\right]
}{
\gamma\bigl(W^{(\gamma)}(u)\bigr)^3
}
\int_u^\infty
\varphi(r)
\exp\left\{
-(r-u)
\frac{W_{+}^{(\gamma)\prime}(u)}
     {W^{(\gamma)}(u)}
\right\}
\\
&\times
\frac{
\bigl(W^{(\gamma)\prime}(r)\bigr)^2
-
W^{(\gamma)}(r)W^{(\gamma)\prime\prime}(r)
}{
Z^{(\gamma)}(r)W_{+}^{(\gamma)\prime}(r)
-
\gamma\bigl(W^{(\gamma)}(r)\bigr)^2
}\,dr +
\frac{1}{\gamma}
\int_0^u
\varphi(r)
\frac{
\bigl(Z^{(\gamma)}(r)-1\bigr)^2
\left[
\bigl(W^{(\gamma)\prime}(r)\bigr)^2
-
W^{(\gamma)}(r)W^{(\gamma)\prime\prime}(r)
\right]
}{
\bigl(W^{(\gamma)}(r)\bigr)^3
}\,dr,
\end{align*}
which proves the statement.

Using \eqref{eq:brownian-scale-functions}, the preceding identity becomes
$$
\begin{aligned}
&\Theta(u)=\mathbf{E}\left(\varphi\left(S_T-I_T\right) ; M_T^{+}<u, H_S<H_I\right) \\
= & \sqrt{2 \gamma} \int_0^u \varphi(x) \frac{(\cosh (\sqrt{2 \gamma} x)-1)^2}{\sinh ^3(k x)} d x+\sqrt{2 \gamma} \frac{(\cosh (\sqrt{2 \gamma} u)-1)^2}{\sinh ^3(\sqrt{2 \gamma} u)} \int_u^{\infty} \varphi(x) \exp \left\{-(x-u) \sqrt{2 \gamma} \frac{\cosh (\sqrt{2 \gamma} u)}{\sinh (\sqrt{2 \gamma} u)}\right\} d x
\end{aligned}
$$ the result matches with the result reported in \cite{Salminen2007}.
\end{proof}
\begin{corollary}
Let $\alpha,\beta>0.$ Then 
 \[ \begin{aligned} &\mathbb{P}\left( M_T^{-}<\alpha,\, M_T^{+}<\beta,\, H_S<H_I \right) \\[0.5ex] &= \mathbf{1}_{\{\alpha>\beta\}} \frac{ \left(Z^{(\gamma)}(\beta)-1\right)^2 \left[ Z^{(\gamma)}(\beta)W_{+}^{(\gamma)\prime}(\beta) - \gamma\left(W^{(\gamma)}(\beta)\right)^2 \right] }{ \gamma\left(W^{(\gamma)}(\beta)\right)^3 } \\ &\qquad\times \int_{\beta}^{\alpha} e^{-(r-\beta) \frac{W_{+}^{(\gamma)\prime}(\beta)} {W^{(\gamma)}(\beta)}} \frac{ \left(W^{(\gamma)\prime}(r)\right)^2 - W^{(\gamma)}(r)W^{(\gamma)\prime\prime}(r) }{ Z^{(\gamma)}(r)W_{+}^{(\gamma)\prime}(r) - \gamma\left(W^{(\gamma)}(r)\right)^2 } \,dr \\[1ex] &\quad+ \frac{1}{\gamma} \int_{0}^{\alpha\wedge\beta} \frac{ \left(Z^{(\gamma)}(r)-1\right)^2 \left[ \left(W^{(\gamma)\prime}(r)\right)^2 - W^{(\gamma)}(r)W^{(\gamma)\prime\prime}(r) \right] }{ \left(W^{(\gamma)}(r)\right)^3 } \,dr, \qquad \alpha,\beta>0. \end{aligned} \]
\end{corollary}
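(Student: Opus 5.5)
This corollary is a specialisation of Theorem~\ref{thm:HSHI-contribution}. We take $u=\beta$ and $\varphi(r)=\mathbf{1}_{\{r<\alpha\}}$, exactly as the first corollary after Theorem~\ref{thm:HIHS-contribution} was obtained from that theorem. The plan has three steps.

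\emph{Step 1: justify the substitution.} On $\{H_S<H_I\}$ the decomposition used in the proof of Theorem~\ref{thm:HSHI-contribution} gives $M_T^-=S_T-I_T$. Hence
\[
\mathbb{P}\left(M_T^-<\alpha,\,M_T^+<\beta,\,H_S<H_I\right)=\Theta(\beta)\big|_{\varphi=\mathbf{1}_{[0,\alpha)}} .
\]
The indicator $\varphi$ is bounded, measurable and nonnegative, so the theorem applies verbatim.

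\emph{Step 2: evaluate the two integrals of Theorem~\ref{thm:HSHI-contribution} with this $\varphi$.}
\begin{itemize}
\item The first term contains $\int_\beta^\infty \mathbf{1}_{\{r<\alpha\}}(\cdots)\,dr$.
\begin{itemize}
\item If $\alpha\le\beta$, the range is empty and the term vanishes.
\item If $\alpha>\beta$, the integral reduces to $\int_\beta^\alpha$.
\end{itemize}
This produces the factor $\mathbf{1}_{\{\alpha>\beta\}}$. The prefactor is unchanged. The integrand is unchanged: the exponential $e^{-(r-\beta)W_+^{(\gamma)\prime}(\beta)/W^{(\gamma)}(\beta)}$ times $A_\gamma(r)/[Z^{(\gamma)}(r)W_+^{(\gamma)\prime}(r)-\gamma(W^{(\gamma)}(r))^2]$.
\item The second term is $\frac1\gamma\int_0^\beta \mathbf{1}_{\{r<\alpha\}}(\cdots)\,dr$. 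This equals $\frac1\gamma\int_0^{\alpha\wedge\beta}(\cdots)\,dr$, which is the stated final line.
\end{itemize}

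\emph{Step 3: check the boundary conventions.} The strict versus non-strict inequalities ($r<\alpha$ versus $r\le\alpha$) only change the integrands on Lebesgue-null sets. The same holds at $r=\beta$, so the stated expression is unaffected.

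The only point needing care is integrability of the first term near $r=\beta$ and at infinity. Boundedness of $\varphi$ and finiteness of $\Theta$ from the theorem cover this, since after truncation we integrate over the compact interval $[\beta,\alpha]$. The integrand is continuous there under the standing $C^2$ assumption on $W^{(\gamma)}$. So I expect no real obstacle beyond the bookkeeping of the indicator.
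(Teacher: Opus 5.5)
Your proposal is a correct deduction and is exactly the paper's argument: the corollary is Theorem~\ref{thm:HSHI-contribution} evaluated at $u=\beta$ and $\varphi=\mathbf{1}_{\{x<\alpha\}}$, with the two integrals truncating to $\mathbf{1}_{\{\alpha>\beta\}}\int_\beta^\alpha$ and $\int_0^{\alpha\wedge\beta}$ (you also keep the right event $\{H_S<H_I\}$, where the paper's one-line proof has the typo $H_I<H_S$).

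Be aware, though, that both arguments inherit Theorem~\ref{thm:HSHI-contribution} wholesale, and that theorem looks wrong in general. Its exponential factor comes from the rate $W^{(q)\prime}(v)/W^{(q)}(v)$ in Proposition~\ref{thm:q-drawup-supremum-tauminus}, which is the drawdown (supremum-excursion) rate rather than a drawup rate; letting $v\to\infty$ there forces $Z^{(q)}(y)-\frac{q}{\Phi(q)}W^{(q)}(y)=e^{-\Phi(q)y}$, which by Laplace transforms holds only when $\psi(\lambda)=q\lambda^2/\Phi(q)^2$ (driftless Brownian motion), and correspondingly, by my computation, for Brownian motion with drift $\mu\neq0$ the $\alpha\to\infty$ limit of the stated formula plus the $H_I<H_S$ marginal does not equal $1-1/Z^{(\gamma)}(\beta)$.
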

\begin{proof}
The assertion follows directly from Theorem~\ref{thm:HSHI-contribution} by taking
\[
\mathbb{P}\!\left(
M_T^-<\alpha,\,
M_T^+<\beta,\,
H_S<H_I
\right)
=
\left.\mathbb{E}\!\left(\varphi(M_T^-);\,M_T^+<u,\,H_I<H_S\right)\right|_{u=\beta,\ \varphi(x)=\mathbf{1}_{\{x<\alpha\}}}.
\]
\end{proof}
\begin{corollary}
Let $\beta>0$. Then
\[ \begin{aligned} &\mathbb{P}\left( M_T^{+}<\beta,\, H_S<H_I \right) = \frac{ \left(Z^{(\gamma)}(\beta)-1\right)^2 \left[ Z^{(\gamma)}(\beta)W_{+}^{(\gamma)\prime}(\beta) - \gamma\left(W^{(\gamma)}(\beta)\right)^2 \right] }{ \gamma\left(W^{(\gamma)}(\beta)\right)^3 } \\ &\qquad\times \int_{\beta}^{\infty} e^{-(r-\beta) \frac{W_{+}^{(\gamma)\prime}(\beta)} {W^{(\gamma)}(\beta)}} \frac{ \left(W^{(\gamma)\prime}(r)\right)^2 - W^{(\gamma)}(r)W^{(\gamma)\prime\prime}(r) }{ Z^{(\gamma)}(r)W_{+}^{(\gamma)\prime}(r) - \gamma\left(W^{(\gamma)}(r)\right)^2 } \,dr \\[1ex] &\quad+ \frac{1}{\gamma} \int_{0}^{\beta} \frac{ \left(Z^{(\gamma)}(r)-1\right)^2 \left[ \left(W^{(\gamma)\prime}(r)\right)^2 - W^{(\gamma)}(r)W^{(\gamma)\prime\prime}(r) \right] }{ \left(W^{(\gamma)}(r)\right)^3 } \,dr, \qquad \beta>0. \end{aligned}  \]
\end{corollary}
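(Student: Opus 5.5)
The plan is to deduce the statement from Theorem~\ref{thm:HSHI-contribution} by choosing $\varphi\equiv 1$ and $u=\beta$. Since $\varphi\equiv1$ is bounded, measurable and nonnegative, the theorem applies directly. Its left-hand side becomes $\mathbb{E}(1;\,M_T^+<\beta,\,H_S<H_I)=\mathbb{P}(M_T^+<\beta,\,H_S<H_I)$.

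On the right-hand side, the first term reduces to the stated prefactor multiplied by $\int_\beta^\infty e^{-(r-\beta)W_+^{(\gamma)\prime}(\beta)/W^{(\gamma)}(\beta)}A_\gamma(r)\big/\bigl[Z^{(\gamma)}(r)W_+^{(\gamma)\prime}(r)-\gamma(W^{(\gamma)}(r))^2\bigr]\,dr$. The second term reduces to $\gamma^{-1}\int_0^\beta(Z^{(\gamma)}(r)-1)^2A_\gamma(r)/(W^{(\gamma)}(r))^3\,dr$. Together these are exactly the displayed formula.

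An equivalent route is to take the preceding corollary for $\mathbb{P}(M_T^-<\alpha,M_T^+<\beta,H_S<H_I)$ and let $\alpha\to\infty$. Then $\alpha\wedge\beta\to\beta$, and $\mathbf 1_{\{\alpha>\beta\}}\to1$. The upper limit $\alpha$ of the first integral tends to $\infty$, and monotone convergence of probabilities handles the left-hand side. The integrand is nonnegative, since $A_\gamma\ge0$ by log-concavity-type properties of $W^{(\gamma)}$ on the relevant range. Alternatively, the sign follows because the integrand is a product of densities and conditional probabilities. Monotone convergence therefore applies to the right-hand side as well.

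The only step requiring care is the finiteness of $\int_\beta^\infty(\cdots)\,dr$, which is needed for the limit to make sense. I would argue it probabilistically rather than analytically. The integral arises from $\Theta(\beta)$ with $\varphi\equiv1$, and $\Theta(\beta)\le1$. The integrand is nonnegative, being the integrated product of the joint density $g(a,b)$ with the conditional distribution functions $g_i$. Finiteness is therefore automatic.

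As a consistency check, adding $\mathbb{P}(M_T^+<\beta,H_I<H_S)$ from the earlier corollary should give $1-1/Z^{(\gamma)}(\beta)$, by Prop.~2 of \cite{Pistorious2004}. I expect this verification, not the specialization itself, to be the main obstacle. Confirming it requires integrating by parts with $Z^{(\gamma)\prime}=\gamma W^{(\gamma)}$, and I would carry it out in the Brownian case first.
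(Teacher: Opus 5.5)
Your proposal is correct and is essentially the paper's argument. The paper lets $\alpha\to\infty$ in the preceding joint corollary (which is Theorem~\ref{thm:HSHI-contribution} with $\varphi=\mathbf{1}_{\{x<\alpha\}}$) and uses continuity from below, which is your second route; your primary substitution $\varphi\equiv1$, $u=\beta$ in Theorem~\ref{thm:HSHI-contribution} gives the same formula with no limit at all. In the limiting route, your monotone-convergence remark supplies a step the paper skips, namely the upper limit of $\int_\beta^\alpha$ tending to $\infty$. For the sign of the integrand, $A_\gamma\ge0$ alone is not enough: you also need $Z^{(\gamma)}(r)W_+^{(\gamma)\prime}(r)-\gamma(W^{(\gamma)}(r))^2=W^{(\gamma)\prime}(r)\,\mathbb{P}(M_T^->r)>0$, but your product-of-densities justification already covers this.
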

\begin{proof}
Letting \(\alpha\to\infty\) we have $ \mathbf{1}_{\{\alpha>\beta\}}\longrightarrow 1, \qquad \alpha\wedge\beta\longrightarrow\beta.$ Therefore, by continuity from below, \[ \lim_{\alpha\to\infty} \mathbb{P}\left( M_T^{-}<\alpha,\, M_T^{+}<\beta,\, H_S<H_I \right) = \mathbb{P}\left( M_T^{+}<\beta,\, H_S<H_I \right).\] 
\end{proof}
\begin{corollary}
Let $\alpha>0.$ Then
\[
\mathbb{P}\left(
M_T^{-}<\alpha,\,
H_S<H_I
\right)
=
\frac{1}{\gamma}
\int_{0}^{\alpha}
\frac{
\left(Z^{(\gamma)}(r)-1\right)^2
\left[
\left(W^{(\gamma)\prime}(r)\right)^2
-
W^{(\gamma)}(r)W^{(\gamma)\prime\prime}(r)
\right]
}{
\left(W^{(\gamma)}(r)\right)^3
}
\,dr,
\qquad \alpha>0.
\]
\end{corollary}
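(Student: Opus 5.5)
The marginal law $\mathbb{P}(M_T^-<\alpha,\,H_S<H_I)$ should come out of the preceding corollary on $\mathbb{P}(M_T^-<\alpha,\,M_T^+<\beta,\,H_S<H_I)$ by letting $\beta\to\infty$ with $\alpha$ fixed. The events $\{M_T^-<\alpha,\,M_T^+<\beta,\,H_S<H_I\}$ increase to $\{M_T^-<\alpha,\,H_S<H_I\}$ as $\beta\uparrow\infty$, since $M_T^+<\infty$ almost surely. Continuity from below therefore identifies the limit of the left-hand side with the desired probability. All the work is in the limit of the right-hand side.

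Once $\beta>\alpha$, the indicator $\mathbf{1}_{\{\alpha>\beta\}}$ vanishes, so the first term of the joint formula drops out identically. In the second term, $\alpha\wedge\beta=\alpha$, so the integral becomes
\[
\frac{1}{\gamma}\int_0^{\alpha}\frac{\bigl(Z^{(\gamma)}(r)-1\bigr)^2\bigl[\bigl(W^{(\gamma)\prime}(r)\bigr)^2-W^{(\gamma)}(r)W^{(\gamma)\prime\prime}(r)\bigr]}{\bigl(W^{(\gamma)}(r)\bigr)^3}\,dr .
\]
This expression no longer depends on $\beta$ for $\beta>\alpha$, so the limit is attained exactly and gives the claim.

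As a cross-check, one can start instead from Theorem~\ref{thm:HSHI-contribution} with $\varphi(x)=\mathbf{1}_{\{x<\alpha\}}$ and send $u\to\infty$. For $u>\alpha$ the integral $\int_u^\infty$ meets $\varphi$ only where it vanishes, so that term is zero, and $\int_0^u\varphi$ reduces to $\int_0^\alpha$. This reproduces the same formula. It is also consistent with the structure of the argument: on $\{H_S<H_I\}$ we have $M_T^-=S_T-I_T$, so the result is simply $\int_0^\alpha$ of the marginal density of the range $r=b-a$ on this ordering. That density is obtained by integrating $p_{IS}^{>}$ of Corollary~\ref{corollary_hsup<hinf,inf=a,sup=b} over $b\in(0,r)$, using $\int_0^r W^{(\gamma)}(r-b)\,db=(Z^{(\gamma)}(r)-1)/\gamma$.

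The only delicate point is the interchange of limit and probability, and monotone continuity handles it. There is no real obstacle beyond confirming that the corollary's formula is valid for all $\beta>\alpha$, which it is as stated.
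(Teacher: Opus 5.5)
Your proposal is correct and is essentially the paper's own argument: send $\beta\to\infty$ in the joint corollary for $\{H_S<H_I\}$, note that $\mathbf{1}_{\{\alpha>\beta\}}$ removes the first term and that $\alpha\wedge\beta=\alpha$ once $\beta>\alpha$, and identify the limit by continuity from below. Two remarks on your cross-checks: the one via $p_{IS}^{>}$ and the range $r=b-a$ is actually the more direct derivation; and since $M_T^+\le S_T-I_T=M_T^-$ on $\{H_S<H_I\}$, the two events already coincide for every $\beta\ge\alpha$, so no limiting argument is needed at all.
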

\begin{proof}
Letting \(\beta\to\infty\) 
we have
$
\alpha\wedge\beta\longrightarrow\alpha.
$
Moreover, for fixed \(\alpha>0\),
$
\mathbf{1}_{\{\alpha>\beta\}}\longrightarrow 0,
$
since \(\beta>\alpha\) for all sufficiently large \(\beta\). Therefore, the first integral term in the joint identity vanishes.
Hence, by continuity from below,
\[
\lim_{\beta\to\infty}
\mathbb{P}\left(
M_T^{-}<\alpha,\,
M_T^{+}<\beta,\,
H_S<H_I
\right)
=
\mathbb{P}\left(
M_T^{-}<\alpha,\,
H_S<H_I
\right).
\]
\end{proof}
\begin{remark}Decomposing according to the  ordering of the extrema gives
\begin{equation}
\mathbb{P}\!\left(M_T^{-}<\beta\right)
=
\mathbb{P}\!\left(M_T^{-}<\beta,\ H_S<H_I\right)
+
\mathbb{P}\!\left(M_T^{-}<\beta,\ H_I<H_S\right).
\label{eq:mdd-ordering-decomposition}
\end{equation} and combining the corresponding results for the ordering $H_S<H_I$ and $H_I<H_S$ we obtain
for $\beta>0$,
\[
\mathbb{P}\!\left(M_T^{-}<\beta\right)
=
1-Z^{(\gamma)}(\beta)
+
\gamma
\frac{\left(W^{(\gamma)}(\beta)\right)^2}
     {W^{(\gamma)\prime}(\beta)} .\] which recovers the standard distribution of the maximum drawdown at an independent exponential time found in \cite{Avram2004}. 
Similarly, decomposing according to the  ordering of the extrema also gives \begin{equation}
\mathbb{P}\!\left(M_T^{+}<\beta\right)
=
\mathbb{P}\!\left(M_T^{+}<\beta,\ H_S<H_I\right)
+
\mathbb{P}\!\left(M_T^{+}<\beta,\ H_I<H_S\right).
\label{eq:mdu-ordering-decomposition}
\end{equation} and again using the addition of the corresponding results we recover for $\beta>0$,
\[
\mathbb{P}\left(M_T^{+}<\beta\right)
=
1-\frac{1}{Z^{(\gamma)}(\beta)},
\qquad \beta>0.
\] Thus this decomposition recovers the standard distribution of the maximum drawup at an independent exponential time given in \cite{Pistorious2004}.
\end{remark}
\begin{theorem}\label{thm:joint-mdd-mdu}
For any $\alpha,\beta>0$,
\[
\begin{aligned}
&\mathbb{P}\left(M_T^-<\alpha,\,M_T^+<\beta\right)\\
&=\begin{cases}
\displaystyle
\int_0^\alpha
\left\{
\frac{\left(Z^{(\gamma)}(r)-1\right)^2
\left[\left(W^{(\gamma)\prime}(r)\right)^2-W^{(\gamma)}(r)W^{(\gamma)\prime\prime}(r)\right]}
{\gamma\left(W^{(\gamma)}(r)\right)^3}
+
\frac{\left[
\gamma W^{(\gamma)}(r)-\left(Z^{(\gamma)}(r)-1\right)
\dfrac{W^{(\gamma)\prime}(r)}{W^{(\gamma)}(r)}
\right]^2}
{\gamma W^{(\gamma)}(r)}
\right\}\,dr
\\[3mm]
\displaystyle\qquad+
\frac{\left[
\gamma W^{(\gamma)}(\alpha)-\left(Z^{(\gamma)}(\alpha)-1\right)
\dfrac{W^{(\gamma)\prime}(\alpha)}{W^{(\gamma)}(\alpha)}
\right]^2}
{\gamma W^{(\gamma)\prime}(\alpha)}
\left[
1-\exp\left\{-(\beta-\alpha)\frac{W^{(\gamma)\prime}(\alpha)}{W^{(\gamma)}(\alpha)}\right\}
\right],
&0<\alpha\leq\beta,
\\[10mm]
\displaystyle
\int_0^\beta
\left\{
\frac{\left(Z^{(\gamma)}(r)-1\right)^2
\left[\left(W^{(\gamma)\prime}(r)\right)^2-W^{(\gamma)}(r)W^{(\gamma)\prime\prime}(r)\right]}
{\gamma\left(W^{(\gamma)}(r)\right)^3}
+
\frac{\left[
\gamma W^{(\gamma)}(r)-\left(Z^{(\gamma)}(r)-1\right)
\dfrac{W^{(\gamma)\prime}(r)}{W^{(\gamma)}(r)}
\right]^2}
{\gamma W^{(\gamma)}(r)}
\right\}\,dr
\\[3mm]
\displaystyle\qquad+
\frac{\left(Z^{(\gamma)}(\beta)-1\right)^2
\left[Z^{(\gamma)}(\beta)W_+^{(\gamma)\prime}(\beta)-\gamma\left(W^{(\gamma)}(\beta)\right)^2\right]}
{\gamma\left(W^{(\gamma)}(\beta)\right)^3}
\int_\beta^\alpha
\exp\left\{-(r-\beta)\frac{W_+^{(\gamma)\prime}(\beta)}{W^{(\gamma)}(\beta)}\right\}
\frac{\left(W^{(\gamma)\prime}(r)\right)^2-W^{(\gamma)}(r)W^{(\gamma)\prime\prime}(r)}
{Z^{(\gamma)}(r)W_+^{(\gamma)\prime}(r)-\gamma\left(W^{(\gamma)}(r)\right)^2}\,dr,
&0<\beta\leq\alpha.
\end{cases}
\end{aligned}
\]
\end{theorem}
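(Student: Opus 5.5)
The plan is to start from the disjoint split \eqref{eq:ordering-split} and add the two ordering contributions already obtained in the corollaries following Theorem~\ref{thm:HIHS-contribution} and Theorem~\ref{thm:HSHI-contribution}. Continuity of the law at the extrema coincidence is not an issue: $H_I\neq H_S$ almost surely, since $S_T>0>I_T$ a.s. for unbounded variation. Hence
\[
\mathbb{P}(M_T^-<\alpha,M_T^+<\beta)=\mathbb{P}(M_T^-<\alpha,M_T^+<\beta,H_I<H_S)+\mathbb{P}(M_T^-<\alpha,M_T^+<\beta,H_S<H_I).
\]
For the first term I will use the corollary obtained from Theorem~\ref{thm:HIHS-contribution} with $d=\alpha$ and $\varphi=\mathbf 1_{\{\cdot<\beta\}}$. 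For the second I will use the corollary obtained from Theorem~\ref{thm:HSHI-contribution} with $u=\beta$ and $\varphi=\mathbf 1_{\{\cdot<\alpha\}}$.

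The key step is to sort the cases $\alpha\le\beta$ and $\beta\le\alpha$ and merge the integrals over a common range. Take first $0<\alpha\le\beta$. The $H_I<H_S$ piece consists of the integral over $(0,\alpha)$ of $[\gamma W^{(\gamma)}(r)-(Z^{(\gamma)}(r)-1)W^{(\gamma)\prime}(r)/W^{(\gamma)}(r)]^2/(\gamma W^{(\gamma)}(r))$ plus the boundary term. That boundary term comes from integrating the exponential $e^{-(r-\alpha)W^{(\gamma)\prime}(\alpha)/W^{(\gamma)}(\alpha)}$ over $(\alpha,\beta)$, which yields the factor $W^{(\gamma)}(\alpha)/W^{(\gamma)\prime}(\alpha)\,[1-e^{-(\beta-\alpha)W^{(\gamma)\prime}(\alpha)/W^{(\gamma)}(\alpha)}]$ and turns $\gamma W^{(\gamma)}(\alpha)$ in the denominator into $\gamma W^{(\gamma)\prime}(\alpha)$. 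In the $H_S<H_I$ piece the indicator $\mathbf 1_{\{\alpha>\beta\}}$ kills the exponential term, leaving $\frac1\gamma\int_0^{\alpha\wedge\beta}=\frac1\gamma\int_0^\alpha$. Adding the two $\int_0^\alpha$ integrands gives the braces in the first case.

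In the case $0<\beta\le\alpha$, the $H_I<H_S$ piece reduces to its first branch $\int_0^\beta$. Since $M_T^+=S_T-I_T$ on this event, the requirement $r<\beta\le\alpha$ makes the drawdown constraint automatic, so no boundary term appears. The $H_S<H_I$ piece contributes $\frac1\gamma\int_0^\beta$ together with the $\int_\beta^\alpha$ exponential term. Combining the two $\int_0^\beta$ integrals gives the second case. On the diagonal $\alpha=\beta$ the extra terms in both cases vanish, so the two formulas agree, which is a consistency check.

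The main obstacle is not algebraic. It is ensuring that the two corollaries being added are themselves correctly normalised. In particular, the boundary-term factor $W^{(\gamma)}(d)/W^{(\gamma)\prime}(d)$ must be computed carefully, with the $W_+'$ convention at the boundary. I would verify this by letting $\alpha,\beta\to\infty$ and checking that the total mass equals $1$, and by recovering the marginals $1-1/Z^{(\gamma)}(\beta)$ and $1-Z^{(\gamma)}(\alpha)+\gamma (W^{(\gamma)}(\alpha))^2/W^{(\gamma)\prime}(\alpha)$, as in the preceding remark. If that check passes, the theorem follows immediately.
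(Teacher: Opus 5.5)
Your assembly step is the paper's proof. You split along the ordering of $H_I$ and $H_S$, import the two joint corollaries, and use $M_T^+\le S_T-I_T=M_T^-$ on $\{H_S<H_I\}$ and $M_T^-\le S_T-I_T=M_T^+$ on $\{H_I<H_S\}$. Your remark that $\mathbb{P}(H_I=H_S)=0$ because $I_T<0<S_T$ a.s.\ is a correct detail the paper leaves implicit.

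The gap is in what you import for $0<\beta\le\alpha$, and the check you defer to in your last paragraph would expose it. Letting $\alpha\to\infty$ in the second branch does \emph{not} return $1-1/Z^{(\gamma)}(\beta)$ in general. Take $\psi(\lambda)=\lambda^2/2-\lambda/2$ and $\gamma=1$. Then $W^{(\gamma)}(x)=\frac{2}{3}(e^{2x}-e^{-x})$, $Z^{(\gamma)}(x)=\frac{1}{3}(e^{2x}+2e^{-x})$, and $(W^{(\gamma)\prime})^2-W^{(\gamma)}W^{(\gamma)\prime\prime}=2\bigl(Z^{(\gamma)}W^{(\gamma)\prime}-\gamma(W^{(\gamma)})^2\bigr)=4e^{x}$, so every term is explicit. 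At $\beta=\ln 2$ the $\beta$-derivative of that limit is $64491/99127\approx0.651$, whereas $\gamma W^{(\gamma)}(\beta)/Z^{(\gamma)}(\beta)^2=21/25$. The first branch does not involve the faulty term, since it carries $\mathbf{1}_{\{\alpha>\beta\}}=0$ there, and it does reproduce the $M_T^-$ marginal.

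The error originates in Proposition~\ref{thm:q-drawup-supremum-tauminus}. It passes through Theorem~\ref{theorem:mdu-HSHI}(ii) into Theorem~\ref{thm:HSHI-contribution} and its corollary, which you add.

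\begin{itemize}
\item \textbf{Wrong rate.} The proposition takes $W^{(q)\prime}_+(v)/W^{(q)}(v)$ as the rate of excursions of $X-I$ higher than $v$. That is the rate for excursions of $S-X$, i.e.\ drawdowns before $\tau^+$.
\item \textbf{Contradiction at $v\to\infty$.} For $q>0$ its final identity would give $\mathbb{E}_x[e^{-q\tau_a^-};\tau_a^-<\infty]=e^{-\Phi(q)(x-a)}$. For Brownian motion with drift $\mu$ the paper's one-sided exit identity gives $e^{-(\Phi(q)+2\mu)(x-a)}$ instead.
\item \textbf{Correct rate in this example.} For Brownian motion with drift, apply the two-sided exit identity on $[y-\epsilon,y-\epsilon+v]$ from $y$ and let $\epsilon\downarrow0$. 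This yields the rate $W^{(q)\prime}(0+)Z^{(q)}(v)/W^{(q)}(v)$. Putting it in place of $W^{(\gamma)\prime}_+(\beta)/W^{(\gamma)}(\beta)$ in the exponent gives exactly $21/25$ in the example above.
\item \textbf{Why the paper's checks miss it.} The two rates coincide for driftless Brownian motion, so the comparisons with Salminen--Vallois do not detect the error.
\end{itemize}

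So the $\beta\le\alpha$ branch, as stated and as you would prove it, is not valid in general. Note also that a passing marginal check is necessary but not sufficient: it would not by itself establish the joint formula.
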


\begin{proof}
The decomposition
\[
\mathbb{P}\{M_T^-<\alpha,M_T^+<\beta\}
=
\mathbb{P}\{M_T^-<\alpha,M_T^+<\beta,H_I<H_S\}
+
\mathbb{P}\{M_T^-<\alpha,M_T^+<\beta,H_S<H_I\}
\]
is disjoint.  We distinguish the two possible orderings of $\alpha$ and $\beta$.

If $0<\alpha\leq\beta$, then on $\{H_S<H_I\}$ the maximum drawdown equals the total range and the condition $M_T^+<\beta$ is redundant once $M_T^-<\alpha$.  Hence the $H_S<H_I$ contribution is
\[
\frac{1}{\gamma}
\int_0^\alpha
\frac{\left(Z^{(\gamma)}(r)-1\right)^2
\left[\left(W^{(\gamma)\prime}(r)\right)^2-W^{(\gamma)}(r)W^{(\gamma)\prime\prime}(r)\right]}
{\left(W^{(\gamma)}(r)\right)^3}\,dr.
\]
The $H_I<H_S$ contribution is, by the preceding corollary,
\begin{align*}
&\int_0^\alpha
\frac{\left[
\gamma W^{(\gamma)}(r)-\left(Z^{(\gamma)}(r)-1\right)
\dfrac{W^{(\gamma)\prime}(r)}{W^{(\gamma)}(r)}
\right]^2}
{\gamma W^{(\gamma)}(r)}\,dr\\
&\quad+
\frac{\left[
\gamma W^{(\gamma)}(\alpha)-\left(Z^{(\gamma)}(\alpha)-1\right)
\dfrac{W^{(\gamma)\prime}(\alpha)}{W^{(\gamma)}(\alpha)}
\right]^2}
{\gamma W^{(\gamma)\prime}(\alpha)}
\left[
1-\exp\left\{-(\beta-\alpha)\frac{W^{(\gamma)\prime}(\alpha)}{W^{(\gamma)}(\alpha)}\right\}
\right].
\end{align*}
Adding these two expressions gives the first branch.

If $0<\beta\leq\alpha$, then on $\{H_I<H_S\}$ the maximum drawup equals the total range and the drawdown condition is redundant once $M_T^+<\beta$.  Thus the $H_I<H_S$ contribution is
\[
\int_0^\beta
\frac{\left[
\gamma W^{(\gamma)}(r)-\left(Z^{(\gamma)}(r)-1\right)
\dfrac{W^{(\gamma)\prime}(r)}{W^{(\gamma)}(r)}
\right]^2}
{\gamma W^{(\gamma)}(r)}\,dr.
\]
The $H_S<H_I$ contribution is
\begin{align*}
&\frac{1}{\gamma}
\int_0^\beta
\frac{\left(Z^{(\gamma)}(r)-1\right)^2
\left[\left(W^{(\gamma)\prime}(r)\right)^2-W^{(\gamma)}(r)W^{(\gamma)\prime\prime}(r)\right]}
{\left(W^{(\gamma)}(r)\right)^3}\,dr\\
&\quad+
\frac{\left(Z^{(\gamma)}(\beta)-1\right)^2
\left[Z^{(\gamma)}(\beta)W_+^{(\gamma)\prime}(\beta)-\gamma\left(W^{(\gamma)}(\beta)\right)^2\right]}
{\gamma\left(W^{(\gamma)}(\beta)\right)^3}
\int_\beta^\alpha
\exp\left\{-(r-\beta)\frac{W_+^{(\gamma)\prime}(\beta)}{W^{(\gamma)}(\beta)}\right\}
\frac{\left(W^{(\gamma)\prime}(r)\right)^2-W^{(\gamma)}(r)W^{(\gamma)\prime\prime}(r)}
{Z^{(\gamma)}(r)W_+^{(\gamma)\prime}(r)-\gamma\left(W^{(\gamma)}(r)\right)^2}\,dr.
\end{align*}
Their sum is the second branch.  When $\alpha=\beta$, the additional integral thatv is the exponential correction vanishes and the two branches coincide.
\end{proof}
\section{Moments and correlation of the maximum drawdown and maximum drawup}\label{sec:correlation}

We finally derive the mixed moment and correlation directly from the marginal and joint tail distributions.  In keeping with the preceding sections, no auxiliary kernel notation is introduced: every term is written explicitly through $W^{(\gamma)}$, $Z^{(\gamma)}$, and their derivatives.

\begin{theorem}\label{thm:correlation-snlp}
Let $X$ be a spectrally negative L\'{e}vy process and let $T$ be an independent exponential time with parameter $\gamma>0$.  Assume that the displayed integrals are finite.  Then
\[
\begin{aligned}
\mathbb{E}[M_T^-M_T^+]
&=
\int_0^\infty x\left[Z^{(\gamma)}(x)-\gamma\frac{\left(W^{(\gamma)}(x)\right)^2}{W^{(\gamma)\prime}(x)}\right]dx
+
\int_0^\infty\frac{x}{Z^{(\gamma)}(x)}\,dx\\
&\quad-
\int_0^\infty
\frac{\left[
\gamma W^{(\gamma)}(x)-\left(Z^{(\gamma)}(x)-1\right)\dfrac{W^{(\gamma)\prime}(x)}{W^{(\gamma)}(x)}
\right]^2W^{(\gamma)}(x)}
{\gamma\left(W^{(\gamma)\prime}(x)\right)^2}\,dx\\
&\quad-
\int_0^\infty
\frac{\left(Z^{(\gamma)}(\beta)-1\right)^2
\left[Z^{(\gamma)}(\beta)W_+^{(\gamma)\prime}(\beta)-\gamma\left(W^{(\gamma)}(\beta)\right)^2\right]}
{\gamma\left(W^{(\gamma)}(\beta)\right)^3}\\
&\qquad\times
\int_\beta^\infty
(r-\beta)
\exp\left\{-(r-\beta)\frac{W_+^{(\gamma)\prime}(\beta)}{W^{(\gamma)}(\beta)}\right\}
\frac{\left(W^{(\gamma)\prime}(r)\right)^2-W^{(\gamma)}(r)W^{(\gamma)\prime\prime}(r)}
{Z^{(\gamma)}(r)W_+^{(\gamma)\prime}(r)-\gamma\left(W^{(\gamma)}(r)\right)^2}
\,dr\,d\beta.
\end{aligned}
\]
Moreover,
\[
\begin{aligned}
&\operatorname{Corr}(M_T^-,M_T^+)\\
&=\Bigg[
\int_0^\infty x\left(Z^{(\gamma)}(x)-\gamma\frac{\left(W^{(\gamma)}(x)\right)^2}{W^{(\gamma)\prime}(x)}\right)dx
+
\int_0^\infty\frac{x}{Z^{(\gamma)}(x)}\,dx\\
&\qquad-
\int_0^\infty
\frac{\left[
\gamma W^{(\gamma)}(x)-\left(Z^{(\gamma)}(x)-1\right)\dfrac{W^{(\gamma)\prime}(x)}{W^{(\gamma)}(x)}
\right]^2W^{(\gamma)}(x)}
{\gamma\left(W^{(\gamma)\prime}(x)\right)^2}\,dx\\
&\qquad-
\int_0^\infty
\frac{\left(Z^{(\gamma)}(\beta)-1\right)^2
\left[Z^{(\gamma)}(\beta)W_+^{(\gamma)\prime}(\beta)-\gamma\left(W^{(\gamma)}(\beta)\right)^2\right]}
{\gamma\left(W^{(\gamma)}(\beta)\right)^3}
\int_\beta^\infty
(r-\beta)
\exp\left\{-(r-\beta)\frac{W_+^{(\gamma)\prime}(\beta)}{W^{(\gamma)}(\beta)}\right\}
\frac{\left(W^{(\gamma)\prime}(r)\right)^2-W^{(\gamma)}(r)W^{(\gamma)\prime\prime}(r)}
{Z^{(\gamma)}(r)W_+^{(\gamma)\prime}(r)-\gamma\left(W^{(\gamma)}(r)\right)^2}
\,dr\,d\beta\\
&\qquad-
\left\{\int_0^\infty\left(Z^{(\gamma)}(x)-\gamma\frac{\left(W^{(\gamma)}(x)\right)^2}{W^{(\gamma)\prime}(x)}\right)dx\right\}
\left\{\int_0^\infty\frac{dx}{Z^{(\gamma)}(x)}\right\}
\Bigg]\\[2mm]
&\quad\times\Bigg[
\left\{
2\int_0^\infty x\left(Z^{(\gamma)}(x)-\gamma\frac{\left(W^{(\gamma)}(x)\right)^2}{W^{(\gamma)\prime}(x)}\right)dx
-
\left[\int_0^\infty\left(Z^{(\gamma)}(x)-\gamma\frac{\left(W^{(\gamma)}(x)\right)^2}{W^{(\gamma)\prime}(x)}\right)dx\right]^2
\right\}\\
&\qquad\times
\left\{
2\int_0^\infty\frac{x}{Z^{(\gamma)}(x)}\,dx
-
\left[\int_0^\infty\frac{dx}{Z^{(\gamma)}(x)}\right]^2
\right\}
\Bigg]^{-1/2}.
\end{aligned}
\]
\end{theorem}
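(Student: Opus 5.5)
The approach is to compute $\mathbb{E}[M_T^-M_T^+]$ from the joint distribution function in Theorem~\ref{thm:joint-mdd-mdu}, combined with the two marginal laws recalled in the final Remark. For nonnegative random variables we have the identity
\[
\mathbb{E}[M_T^-M_T^+]=\int_0^\infty\!\!\int_0^\infty \mathbb{P}(M_T^->\alpha,\,M_T^+>\beta)\,d\alpha\,d\beta ,
\]
and inclusion--exclusion gives
\[
\mathbb{P}(M_T^->\alpha,M_T^+>\beta)=1-F^-(\alpha)-F^+(\beta)+F(\alpha,\beta).
\]
Here $F^-(\alpha)=1-Z^{(\gamma)}(\alpha)+\gamma (W^{(\gamma)}(\alpha))^2/W^{(\gamma)\prime}(\alpha)$ and $F^+(\beta)=1-1/Z^{(\gamma)}(\beta)$ are taken from the Remark, and $F$ is the joint law of Theorem~\ref{thm:joint-mdd-mdu}.

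Since the integrand is not integrable term by term, I would organise the computation around the orderings. I use $\mathbb{E}[XY]=\mathbb{E}[XY;H_I<H_S]+\mathbb{E}[XY;H_S<H_I]$. On $\{H_I<H_S\}$, $M_T^+=S_T-I_T$, so Theorem~\ref{thm:HIHS-contribution} with $\varphi(r)=r$ gives $\mathbb{E}[M_T^+;M_T^-<d,H_I<H_S]$. Writing $M_T^-=\int_0^\infty \mathbf 1_{\{M_T^->d\}}\,dd$ and subtracting this from $\mathbb{E}[M_T^+;H_I<H_S]$ (the case $d\to\infty$) yields
\[
\mathbb{E}[M_T^-M_T^+;H_I<H_S]=\int_0^\infty\bigl(\mathbb{E}[M_T^+;H_I<H_S]-\mathbb{E}[M_T^+;M_T^-<d,H_I<H_S]\bigr)\,dd .
\]
The symmetric argument with Theorem~\ref{thm:HSHI-contribution}, using $\varphi(r)=r$ in the variable $u$, treats $\{H_S<H_I\}$.

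Next I would rewrite each piece using $\int_d^\infty r e^{-(r-d)\kappa}dr = d/\kappa+1/\kappa^2$, with $\kappa=W^{(\gamma)\prime}(d)/W^{(\gamma)}(d)$, and rearrange the remaining terms into the stated form. The first two integrals, $\int_0^\infty x\,\mathbb{P}(M_T^->x)\,dx$ and $\int_0^\infty x\,\mathbb{P}(M_T^+>x)\,dx$, come from $\int_0^\infty\!\int_0^\infty(1-F^--F^++F)$ after collecting the diagonal terms $F(\alpha,\beta)$ with $\alpha\wedge\beta$. The $H_I<H_S$ correction term $\frac{[\cdots]^2}{\gamma W^{(\gamma)\prime}(\alpha)}\bigl(1-e^{-(\beta-\alpha)\kappa}\bigr)$, integrated over $\beta>\alpha$, produces after the $\beta$-integration the third term, with the factor $W^{(\gamma)}/W^{(\gamma)\prime}$ appearing squared. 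The $H_S<H_I$ correction, integrated in $\alpha$ over $(\beta,\infty)$ by Fubini, produces the weight $(r-\beta)$ in the last double integral. The signs follow from the inclusion--exclusion bookkeeping.

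The correlation then follows from the standard formulas $\mathbb{E}[M]=\int_0^\infty \mathbb{P}(M>x)\,dx$ and $\mathbb{E}[M^2]=2\int_0^\infty x\,\mathbb{P}(M>x)\,dx$, applied to both marginals. This gives the covariance $\mathbb{E}[M_T^-M_T^+]-\mathbb{E}M_T^-\,\mathbb{E}M_T^+$ and the variances that appear in the denominator.

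I expect the main obstacle to be the bookkeeping in the double integral. The individual pieces (for instance $\int\!\!\int F$) diverge separately, so they must be paired before integrating. The paper's stated assumption of finite integrals, together with Tonelli on nonnegative tails, justifies the interchanges once the pairing is done. My plan is to work with the tail representation $\int_0^\infty\!\int_0^\infty \mathbb{P}(M_T^->\alpha,M_T^+>\beta)$ split over $\{\alpha\le\beta\}$ and $\{\beta\le\alpha\}$. In each region I substitute the corresponding branch of Theorem~\ref{thm:joint-mdd-mdu}, and I verify the result against the Brownian formulas of \cite{Salminen2007}.
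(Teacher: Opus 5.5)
Your proposal is correct and, in its final form (integrate $\mathbb{P}(M_T^->\alpha,M_T^+>\beta)$ over the quadrant split along $\alpha=\beta$, substitute the matching branch of Theorem~\ref{thm:joint-mdd-mdu}, use Tonelli to produce the $(r-\beta)$ weight, then apply the standard moment formulas for the correlation), it is essentially the paper's proof.

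One small fix to your bookkeeping concerns the region $\{\alpha\le\beta\}$. There everything except $-C(\alpha)e^{-(\beta-\alpha)\kappa(\alpha)}$, with $C(\alpha)=[\cdots]^2/(\gamma W^{(\gamma)\prime}(\alpha))$, cancels against $F^-(\alpha)$, so it is the exponential part, not $C(\alpha)(1-e^{-(\beta-\alpha)\kappa})$, that you integrate over $\beta>\alpha$. The resulting $C(\alpha)/\kappa(\alpha)$ carries the factor $W^{(\gamma)}/(W^{(\gamma)\prime})^2$, not $(W^{(\gamma)}/W^{(\gamma)\prime})^2$.
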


\begin{proof}
The marginal distributions obtained above are
\[
\mathbb{P}(M_T^->x)
=Z^{(\gamma)}(x)-\gamma\frac{\left(W^{(\gamma)}(x)\right)^2}{W^{(\gamma)\prime}(x)},
\qquad
\mathbb{P}(M_T^+>x)=\frac{1}{Z^{(\gamma)}(x)}.
\]
Thus, 
\[
\mathbb{E}[M_T^-]
=\int_0^\infty\left(Z^{(\gamma)}(x)-\gamma\frac{\left(W^{(\gamma)}(x)\right)^2}{W^{(\gamma)\prime}(x)}\right)dx,
\]
\[
\mathbb{E}[(M_T^-)^2]
=2\int_0^\infty x\left(Z^{(\gamma)}(x)-\gamma\frac{\left(W^{(\gamma)}(x)\right)^2}{W^{(\gamma)\prime}(x)}\right)dx,
\]
\[
\mathbb{E}[M_T^+]=\int_0^\infty\frac{dx}{Z^{(\gamma)}(x)},
\qquad
\mathbb{E}[(M_T^+)^2]=2\int_0^\infty\frac{x}{Z^{(\gamma)}(x)}\,dx.
\]
For the mixed moment, the joint distribution in Theorem~\ref{thm:joint-mdd-mdu} gives the joint survival probability in the form
\[
\mathbb{P}(M_T^->\alpha,M_T^+>\beta)
=\begin{cases}
\displaystyle
\frac{1}{Z^{(\gamma)}(\beta)}
-
\frac{\left[
\gamma W^{(\gamma)}(\alpha)-\left(Z^{(\gamma)}(\alpha)-1\right)
\dfrac{W^{(\gamma)\prime}(\alpha)}{W^{(\gamma)}(\alpha)}
\right]^2}
{\gamma W^{(\gamma)\prime}(\alpha)}
\exp\left\{-(\beta-\alpha)\frac{W_+^{(\gamma)\prime}(\alpha)}{W^{(\gamma)}(\alpha)}\right\},
&0<\alpha\leq\beta,\\[5mm]
\displaystyle
Z^{(\gamma)}(\alpha)-\gamma\frac{\left(W^{(\gamma)}(\alpha)\right)^2}{W^{(\gamma)\prime}(\alpha)}
-
\frac{\left(Z^{(\gamma)}(\beta)-1\right)^2
\left[Z^{(\gamma)}(\beta)W_+^{(\gamma)\prime}(\beta)-\gamma\left(W^{(\gamma)}(\beta)\right)^2\right]}
{\gamma\left(W^{(\gamma)}(\beta)\right)^3}
\int_\alpha^\infty
\exp\left\{-(r-\beta)\frac{W_+^{(\gamma)\prime}(\beta)}{W^{(\gamma)}(\beta)}\right\}
\frac{\left(W^{(\gamma)\prime}(r)\right)^2-W^{(\gamma)}(r)W^{(\gamma)\prime\prime}(r)}
{Z^{(\gamma)}(r)W_+^{(\gamma)\prime}(r)-\gamma\left(W^{(\gamma)}(r)\right)^2}\,dr,
&0<\beta\leq\alpha.
\end{cases}
\]
And using
\[
\mathbb{E}[M_T^-M_T^+]
=\int_0^\infty\int_0^\infty
\mathbb{P}(M_T^->\alpha,M_T^+>\beta)\,d\beta\,d\alpha
\]
and splitting the quadrant along $\alpha=\beta$, the marginal terms become
\[
\int_0^\infty x\left(Z^{(\gamma)}(x)-\gamma\frac{\left(W^{(\gamma)}(x)\right)^2}{W^{(\gamma)\prime}(x)}\right)dx
+
\int_0^\infty\frac{x}{Z^{(\gamma)}(x)}\,dx.
\]
The exponential correction on $\alpha\leq\beta$ integrates to
\[
\int_0^\infty
\frac{\left[
\gamma W^{(\gamma)}(x)-\left(Z^{(\gamma)}(x)-1\right)\dfrac{W^{(\gamma)\prime}(x)}{W^{(\gamma)}(x)}
\right]^2W^{(\gamma)}(x)}
{\gamma\left(W^{(\gamma)\prime}(x)\right)^2}\,dx.
\]
For the second correction, Tonelli's theorem changes the order of integration and produces the factor $r-\beta$, giving exactly the final double integral stated in the theorem. Subtracting $\mathbb{E}[M_T^-]\mathbb{E}[M_T^+]$ gives the covariance, while the two marginal second-moment identities give the variances.  Substituting these yields the displayed formula of the correlation.
\end{proof}

\begin{remark}For standard Brownian motion with zero drift, substituting \eqref{eq:brownian-scale-functions} into the preceding spectrally negative Levy process moment results give,
\[
\begin{aligned}
\mathbb{E}[M_T^-]
&=\mathbb{E}[M_T^+]
=\frac{\pi}{2\sqrt{2\gamma}},\quad
\mathbb{E}[(M_T^-)^2]
=\mathbb{E}[(M_T^+)^2]
=\frac{2G}{\gamma},\quad
\operatorname{Var}(M_T^-)
=\operatorname{Var}(M_T^+)
=\frac{1}{\gamma}\left(2G-\frac{\pi^2}{8}\right)\\[1mm]
\mathbb{E}[M_T^-M_T^+]
&=\frac{2G-2\log2+1}{\gamma},\quad
\operatorname{Cov}(M_T^-,M_T^+)
=\frac{1}{\gamma}\left(2G-2\log2+1-\frac{\pi^2}{8}\right),\\[1mm]
\operatorname{Corr}(M_T^-,M_T^+)
&=\frac{2G-2\log2+1-\dfrac{\pi^2}{8}}{2G-\dfrac{\pi^2}{8}}
\approx0.3542718536,
\end{aligned}
\]
where $G$ denotes the Catalan's constant.  Note that the correlation coefficient is independent of the exponential rate $\gamma$ and all these results recover the exponential time horizon results acquired in \cite{Salminen2007}. 
\end{remark}

\section{Conclusion}\label{sec:conclusion}

A scale-function framework has been developed for the joint distribution of the maximum drawdown and maximum drawup of a spectrally negative L\'{e}vy process observed at an independent exponential time horizon.  The methodology applied is decomposition of the sample path according to the two possible orderings of the  infimum and  supremum.  Conditional on the  extrema values and on the ordering of their attainment times, the relevant pre-, intermediate and post- components are independent and are characterized by Doob \(h\)-transforms of killed spectrally negative L\'{e}vy processes.  These \(h\)-functions are expressed explicitly through \(W^{(\gamma)}\), \(Z^{(\gamma)}\) and their derivatives.
The resulting conditional laws yield product representations for the maximum drawdown and the maximum drawup on the decomposed path segments.  Integration over the joint laws of the extrema then gives the joint distribution of \((M_T^-,M_T^+)\).  

This study can be extended in various direction for future studies. First, the same decomposition can be combined with occupation-time or excursion identities to include drawdown and drawup durations.  Second, when scale functions are available analytically or numerically, the formulae can be used to compute joint risk measures such as conditional expected maximum drawdown given a maximum drawup, or conditional expected maximum drawup knowing that infimum is attained before the supremum. Third, the exponential time horizon results provide a starting point for numerical inversion or randomized-horizon approximations for deterministic maturities.  These directions would connect the present fluctuation-theoretic results with statistical estimation, insurance and quantitative risk assesments.

\textbf{Acknowledgments:}
The authors gratefully acknowledge support from T\"UB\.ITAK 1001 project 124F094.  


\begin{thebibliography}{99}

\bibitem[Albrecher and Ivanovs(2017)]{Albrecher2017}
H. Albrecher and J. Ivanovs.
On the joint distribution of tax payments and capital injections for a L\'{e}vy risk model.
\emph{Probability and Mathematical Statistics} 37(2):219--227, 2017.

\bibitem[Asmussen et~al.(2004)]{Asmussen2004}
S. Asmussen, F. Avram and M. Pistorius.
Russian and American put options under exponential phase-type L\'{e}vy models.
\emph{Stochastic Processes and their Applications} 109(1):79--111, 2004.

\bibitem[Avram and Minca(2015)]{Avram2015}
F. Avram and A. Minca.
Steps towards a management toolkit for central branch risk networks, using rational approximations and matrix scale functions.
In \emph{Modern Trends in Controlled Stochastic Processes: Theory and Applications}, 263--278, 2015.

\bibitem{Avram2004}
F. Avram, A. E. Kyprianou and M. R. Pistorius.
Exit problems for spectrally negative L\'{e}vy processes and applications to (canadized) Russian options.
\emph{Annals of Applied Probability} 14(1):215--238, 2004.

\bibitem[Avram et~al.(2008)]{Avram2008}
F. Avram, Z. Palmowski and M. R. Pistorius.
Exit problem of a two-dimensional risk process from the quadrant: exact and asymptotic results.
\emph{Annals of Applied Probability} 18(6):2421--2449, 2008.

\bibitem[Avram et~al.(2019)]{Avram2019}
F. Avram, D. Grahovac and C. Vardar-Acar.
The $W,Z/\nu,\delta$ paradigm for the first passage of strong Markov processes without positive jumps.
\emph{Risks} 7(1), 2019.

\bibitem{avram2020}
F. Avram, D. Grahovac and C. Vardar-Acar.
The $W,Z$ scale functions kit for first passage problems of spectrally negative L\'{e}vy processes, and applications to control problems.
\emph{ESAIM: Probability and Statistics} 24:454--525, 2020.

\bibitem[Azcue and Muler(2005)]{Azcue2005}
P. Azcue and N. Muler.
Optimal reinsurance and dividend distribution policies in the Cram\'{e}r--Lundberg model.
\emph{Mathematical Finance} 15(2):261--308, 2005.

\bibitem{Baurdoux2017}
E. J. Baurdoux, Z. Palmowski, P. Zbigniew and R. Martijn 
On future drawdowns of Lévy processes.
\emph{Stochastic Processes and their Applications} 127(8):2679--2698, 2017.
 
\bibitem[Bertoin(1993)]{Bertoin1993}
J. Bertoin.
Splitting at the infimum and excursions in half-lines for random walks and L\'{e}vy processes.
\emph{Stochastic Processes and their Applications} 47(1):17--35, 1993.

\bibitem{Bertoin1996}
J. Bertoin. \emph{Lévy Processes}. Cambridge University Press, Cambridge Tracts in Mathematics, 1996.
  
\bibitem[Carr et~al.(2011)]{Carr2011}
P. Carr, H. Zhang and O. Hadjiliadis.
Maximum drawdown insurance.
\emph{International Journal of Theoretical and Applied Finance} 14(8):1195--1230, 2011.

\bibitem[Chaumont(1996)]{Chaumont1996}
L. Chaumont.
Conditionings and path decompositions for L\'{e}vy processes.
\emph{Stochastic Processes and their Applications} 64(1):39--54, 1996.

\bibitem[Chaumont and Doney(2005)]{Chaumont2005}
L. Chaumont and R. A. Doney.
On L\'{e}vy processes conditioned to stay positive.
\emph{Electronic Journal of Probability} 10:948--961, 2005.

\bibitem[Chekhlov et~al.(2005)]{Chekhlov2005}
A. Chekhlov, S. Uryasev and M. Zabarankin.
Drawdown measure in portfolio optimization.
\emph{International Journal of Theoretical and Applied Finance} 8(1):13--58, 2005.

\bibitem[Cont and Tankov(2004)]{Cont2004}
R. Cont and P. Tankov.
\emph{Financial Modelling with Jump Processes}.
Chapman and Hall/CRC, 2004.

\bibitem[Cvitanic and Karatzas(1995)]{cvitanic1995portfolio}
J. Cvitanic and I. Karatzas.
On portfolio optimization under drawdown constraints.
\emph{IMA Volume in Mathematical Finance} 65:35--42, 1995.

\bibitem[\c{C}a\u{g}lar et~al.(2022)]{ccauglar2022optimal}
M. \c{C}a\u{g}lar, A. Kyprianou and C. Vardar-Acar.
An optimal stopping problem for spectrally negative Markov additive processes.
\emph{Stochastic Processes and their Applications} 150:1109--1138, 2022.

\bibitem[Douady et~al.(2000)]{Douady2000}
R. Douady, A. N. Shiryaev and M. Yor.
On probability characteristics of downfalls in a standard Brownian motion.
\emph{Theory of Probability and Its Applications} 44(1):29--38, 2000.

\bibitem[Duquesne(2003)]{Duquesne2003}
T. Duquesne.
Path decompositions for real L\'{e}vy processes.
\emph{Annales de l'Institut Henri Poincar\'{e}, Probabilit\'{e}s et Statistiques} 39(2):339--370, 2003.

\bibitem[Grossman and Zhou(1993)]{grossman1993optimal}
S. J. Grossman and Z. Zhou.
Optimal investment strategies for controlling drawdowns.
\emph{Mathematical Finance} 3(3):241--276, 1993.

\bibitem[Hadjiliadis and Ve\v{c}e\v{r}(2006)]{Hadjiliadis2006}
O. Hadjiliadis and J. Ve\v{c}e\v{r}.
Drawdowns preceding rallies in the Brownian motion model.
\emph{Quantitative Finance} 6(5):403--409, 2006.

\bibitem{10.1214/ECP.v20-3945}
Y. Hu, Z. Shi and M. Yor.
The maximal drawdown of the Brownian meander.\emph{Electronic Communications in Probability} 20:1--6, 2015.

\bibitem[Huang et~al.(2013)]{Huang2013}
W. Huang, C. Weng and Y. Zhang.
Multivariate risk models under heavy-tailed risks.
\emph{Applied Stochastic Models in Business and Industry}, 2013.

\bibitem[Ivanovs and Palmowski(2012)]{Ivanovs2012}
J. Ivanovs and Z. Palmowski.
Occupation densities in solving exit problems for Markov additive processes and their reflections.
\emph{Stochastic Processes and their Applications} 122(9):3342--3360, 2012.

\bibitem{rivero2013}
A. Kuznetsov, A. E. Kyprianou and V. Rivero.
The theory of scale functions for spectrally negative L\'{e}vy processes.
In \emph{L\'{e}vy Matters II}, Lecture Notes in Mathematics 2061, 97--186. Springer, 2013.

\bibitem[Kyprianou(2006)]{kyprianou2006introductory}
A. E. Kyprianou.
\emph{Introductory Lectures on Fluctuations of L\'{e}vy Processes with Applications}.
Springer, 2006.

\bibitem[Kyprianou(2014)]{Kyprianou2014}
A. E. Kyprianou.
\emph{Fluctuations of L\'{e}vy Processes with Applications: Introductory Lectures}.
Second edition. Springer, 2014.

\bibitem[Kyprianou et~al.(2014)]{Kyprianou2014a}
A. E. Kyprianou, J. C. Pardo and J. L. P\'{e}rez.
Occupation times of refracted L\'{e}vy processes.
\emph{Journal of Theoretical Probability} 27(4):1292--1315, 2014.

\bibitem[Landriault et~al.(2015)]{Landriault2015}
D. Landriault, B. Li and S. Li.
Analysis of a drawdown-based regime-switching L\'{e}vy insurance model.
\emph{Insurance: Mathematics and Economics} 60:98--107, 2015.

\bibitem[Landriault et~al.(2016)]{Landriault2016}
D. Landriault, B. Li and S. Li.
Drawdown analysis for the renewal insurance risk process.
\emph{Scandinavian Actuarial Journal}, 1--19, 2016.

\bibitem[Landriault et~al.(2017)]{Landriault2017}
D. Landriault, B. Li and H. Zhang.
On magnitude, asymptotics and duration of drawdowns for L\'{e}vy models.
\emph{Bernoulli} 23(1):432--458, 2017.

\bibitem[Leal and Mendes(2005)]{Leal2005}
R. P. C. Leal and B. V. M. Mendes.
Maximum drawdown.
\emph{Journal of Alternative Investments} 7(4):83--91, 2005.

\bibitem[Li et~al.(2019)]{Li2019}
B. Li, N. L. Vu and X. Zhou.
Exit problems for general drawdown times of spectrally negative L\'{e}vy processes.
\emph{Journal of Applied Probability} 56(2):441--457, 2019.

\bibitem[Magdon-Ismail et~al.(2004)]{Magdon-Ismail2004}
M. Magdon-Ismail, A. F. Atiya, A. Pratap and Y. S. Abu-Mostafa.
On the maximum drawdown of a Brownian motion.
\emph{Journal of Applied Probability} 41(1):147--161, 2004.

\bibitem{risks7040105}
E. Mayerhofer.
Three Essays on Stopping.
\emph{Risks} 7(4):105, 2019.

\bibitem[Mijatovi\'{c} and Pistorius(2012)]{Mijatovic2012}
A. Mijatovi\'{c} and M. R. Pistorius.
On the drawdown of completely asymmetric L\'{e}vy processes.
\emph{Stochastic Processes and their Applications} 122(11):3812--3836, 2012.

\bibitem[Millar(1977)]{Millar1977}
P. W. Millar.
Zero-one laws and the minimum of a Markov process.
\emph{Transactions of the American Mathematical Society} 226:365--391, 1977.

\bibitem[{\O}ksendal(2014)]{oksendal2014stochastic}
B. {\O}ksendal.
\emph{Stochastic Differential Equations: An Introduction with Applications}.
Springer, 2014.


\bibitem[Palmowski and Tumilewicz(2018)]{Palmowski2018}
Z. Palmowski and J. Tumilewicz.
Pricing insurance drawdown-type contracts with underlying L\'{e}vy assets.
\emph{Insurance: Mathematics and Economics} 79:1--14, 2018.

\bibitem[Papapantoleon(2008)]{papapantoleon2008introduction}
A. Papapantoleon.
An introduction to L\'{e}vy processes with applications in finance.
Lecture notes, 2008.

\bibitem[P\'{e}rez et~al.(2018)]{Perez2018}
J.-L. P\'{e}rez, K. Yamazaki and A. Bensoussan.
Optimal periodic replenishment policies for spectrally positive L\'{e}vy demand processes.
arXiv:1806.09216, 2018.

\bibitem[Pistorius(2004)]{Pistorious2004}
M.R. Pistorius. On exit and ergodicity of the spectrally one-sided 
L´evy process reflected at its infimum.
\emph{J. Theor. Probab.} 17, 183–220 (2004)

\bibitem[Pospisil and Ve\v{c}e\v{r}(2009)]{Pospisil2009}
L. Pospisil and J. Ve\v{c}e\v{r}.
PDE methods for the maximum drawdown.
\emph{Journal of Computational Finance} 12(2):63--77, 2009.

\bibitem[Pospisil and Ve\v{c}e\v{r}(2010)]{Pospisil2010}
L. Pospisil and J. Ve\v{c}e\v{r}.
Portfolio sensitivity to changes in the maximum and the maximum drawdown.
\emph{Quantitative Finance} 10(6):617--627, 2010.

\bibitem[Rossello and Lo Cascio(2021)]{Rossello2021}
D. Rossello and S. Lo Cascio.
A refined measure of conditional maximum drawdown.
\emph{Risk Management} 23(4):301--321, 2021.

\bibitem[Salminen and Vallois(2007)]{Salminen2007}
P. Salminen and P. Vallois.
On maximum increase and decrease of Brownian motion.
\emph{Annales de l'Institut Henri Poincar\'{e} (B) Probability and Statistics} 43:655--676, 2007.

\bibitem[Salminen and Vallois(2020)]{Salminen2020}
P. Salminen and P. Vallois.
On the maximum increase and decrease of one-dimensional diffusions.
\emph{Stochastic Processes and their Applications} 130(9):5592--5604, 2020.

\bibitem{salminen2025drawdowns}
 P. Salminen and P. Vallois.
  Drawdowns of diffusions.
\emph{ESAIM: Probability and Statistics} 29:357--380, 2025.
  

\bibitem[Schuhmacher and Eling(2011)]{schuhmacher2011sufficient}
F. Schuhmacher and M. Eling.
Sufficient conditions for expected utility to imply drawdown-based performance rankings.
\emph{Journal of Banking and Finance} 35(9):2311--2318, 2011.

\bibitem[Shepp and Shiryaev(1993)]{shepp1993russian}
L. Shepp and A. N. Shiryaev.
The Russian option: reduced regret.
\emph{Annals of Applied Probability} 3(3):631--640, 1993.

\bibitem[Shreve(2004)]{shreve2004stochastic}
S. E. Shreve.
\emph{Stochastic Calculus for Finance II: Continuous-Time Models}.
Springer, 2004.

\bibitem[Sornette(2003)]{Sornette2003}
D. Sornette.
\emph{Why Stock Markets Crash: Critical Events in Complex Financial Systems}.
Princeton University Press, 2003.

\bibitem[Vardar-Acar et~al.(2021)]{VardarAcar2021}
C. Vardar-Acar, M. \c{C}a\u{g}lar and F. Avram.
Maximum drawdown and drawdown duration of spectrally negative L\'{e}vy processes decomposed at extremes.
\emph{Journal of Theoretical Probability} 34:1486--1505, 2021.

\bibitem[Vardar-Acar and \c{C}a\u{g}lar(2017)]{VardarAcar2017}
C. Vardar-Acar and M. \c{C}a\u{g}lar.
Maximum loss and maximum gain of spectrally negative L\'{e}vy processes.
\emph{Extremes} 20(2):301--308, 2017.

\bibitem [Ceren Vardar Acar and Mine Çağlar(2026)]{VardarAcar2026}
      C. Vardar-Acar and M. \c{C}a\u{g}lar. Conditional Path Decomposition at the Infimum and Maximum Drawdowns for Spectrally Negative L\'{e}vy Processes,
      \emph{arXiv} eprint 2606.27573, 2026
      
\bibitem[Ve\v{c}e\v{r}(2006)]{Vecer2006}
J. Ve\v{c}e\v{r}.
Maximum drawdown and directional trading.
Frankfurt Math Finance Workshop Paper, 2006.

\bibitem[Ve\v{c}e\v{r}(2007)]{Vecer2007}
J. Ve\v{c}e\v{r}.
Preventing portfolio losses by hedging maximum drawdown.
\emph{Wilmott} 5(4):1--8, 2007.

\bibitem[Zhang and Hadjiliadis(2010)]{Zhang2010}
H. Zhang and O. Hadjiliadis.
Drawdowns and rallies in a finite time horizon.
\emph{Methodology and Computing in Applied Probability} 12(2):293--308, 2010.

\bibitem[Zhang and Hadjiliadis(2012)]{Zhang2012}
H. Zhang and O. Hadjiliadis.
Drawdowns and the speed of market crash.
\emph{Methodology and Computing in Applied Probability} 14(3):739--752, 2012.

\bibitem{ZHANG2023104669}
G. Zhang and L. Li.
A general method for analysis and valuation of drawdown risk.
\emph{Journal of Economic Dynamics and Control} 152:104--669,2023.

\bibitem[Cont and Tankov(2004)]{contfinancial}
R. Cont and P. Tankov.
\emph{Financial Modelling with Jump Processes}.
Chapman and Hall/CRC, 2004.

\end{thebibliography}
\end{document}